\documentclass[a4paper,11pt]{article}
\usepackage{amssymb}
\usepackage{amsmath}
\usepackage{amsthm}
\usepackage[all,cmtip]{xy}
\allowdisplaybreaks
\usepackage{amscd}
\usepackage{graphicx}

\usepackage{ifpdf}
\ifpdf
  \usepackage[colorlinks=true,linkcolor=blue,final,backref=page,hyperindex,citecolor=red]{hyperref}
\else
  \usepackage[colorlinks,final,backref=page,hyperindex,hypertex]{hyperref}
\fi

\usepackage{geometry}
\usepackage{pxfonts}

\usepackage[shortlabels]{enumitem}
\usepackage{ifpdf}

\vsize=29.7truecm \hsize=21truecm \topmargin=-0.5truecm
\oddsidemargin=-0.5truecm \evensidemargin=0.5truecm
\textheight=22.5truecm \textwidth=16.5truecm \flushbottom
\def\vbar{\mathchoice{\vrule height6.3ptdepth-.5ptwidth.8pt\kern- .8pt}
{\vrule height6.3ptdepth-.5ptwidth.8pt\kern-.8pt} {\vrule
height4.1ptdepth-.35ptwidth.6pt\kern-.6pt} {\vrule
height3.1ptdepth-.25ptwidth.5pt\kern-.5pt}}

\usepackage{tikz}
\usepackage[active]{srcltx}



\topmargin -.8cm \textheight 22cm \oddsidemargin 0cm \evensidemargin -0cm \textwidth 16cm

\makeatletter

\newtheorem{thm}{Theorem}[section]
\newtheorem{lem}[thm]{Lemma}
\newtheorem{cor}[thm]{Corollary}
\newtheorem{pro}[thm]{Proposition}
\newtheorem{ex}[thm]{Example}
\newtheorem{rmk}[thm]{Remark}
\newtheorem{defi}[thm]{Definition}

\newcommand{\be }{\begin{equation}}
\newcommand{\ee }{\end{equation}}

\newcommand{\g}{\mathfrak g}


\newcommand{\huaL}{\mathcal{L}}


\newcommand{\huaO}{{\mathcal{O}}}

\newcommand{\Id}{\rm{Id}}

\newcommand{\br}[1]{   [ \cdot,    \cdot  ]   }

\newcommand{\Hom}{\mathrm{Hom}}

\newcommand{\gl}{\mathfrak {gl}}

\def\a{\alpha}

\newcommand{\ad}{\mathrm{ad}}

\newcommand{\K}{\mathbb{K}}

\begin{document}

\title{On Hom-$F$-manifold algebras and quantization}

\author{\normalsize \bf  A. Ben Hassine\small{$^{1,2}$} \footnote { Corresponding author,  E-mail: benhassine.abdelkader@yahoo.fr}, T. Chtioui\small{$^{3}$} \footnote { Corresponding author,  E-mail: chtioui.taoufik@yahoo.fr},  M.A.Maalaoui\small{$^{4}$} \footnote { Corresponding author,  E-mail: maalaouimed@gmail.com},  S. Mabrouk\small{$^{5}$} \footnote { Corresponding author,  E-mail: mabrouksami00@yahoo.fr}}
\date{{\small{$^{1}$ Department of Mathematics, Faculty of Sciences and Arts at
Belqarn, P. O. Box 60, Sabt Al-Alaya 61985, University of Bisha, Saudi Arabia \\  \small{$^{2,3,4}$    Faculty of Sciences, University of Sfax,   BP
1171, 3000 Sfax, Tunisia \\  \small{$^{5}$} Faculty of Sciences, University of Gafsa,   BP
2100, Gafsa, Tunisia
 }}}}

\maketitle

\begin{abstract}
The notion of a $F$-manifold algebras is an algebraic description of
a $F$-manifold. In this paper, we introduce the notion of Hom-$F$-manifold algebras which is generalisation of $F$-manifold algebras and Hom-Poisson algebras. We develop the representation theory of Hom-$F$-manifold algebras and generalize the notion of Hom-pre-Poisson algebras by introducing the Hom-pre-$F$-manifold algebras which give rise to a Hom-$F$-manifold algebra through the sub-adjacent commutative Hom-associative algebra and the sub-adjacent Hom-Lie algebra. Using $\huaO$-operators  on a Hom-$F$-manifold algebras we  construct a Hom-pre-$F$-manifold algebras on a module. Then,  we study Hom-pre-Lie formal deformations  of  commutative Hom-associative algebra and prove that Hom-$F$-manifold algebras  are the corresponding semi-classical limits.  Finally,  we study
Hom-Lie infinitesimal deformations and extension of Hom-pre-Lie $n$-deformation to Hom-pre-Lie $(n+1)$-deformation of a commutative Hom-associative algebra via cohomology theory.
\end{abstract}

\noindent\textbf{Keywords:} Hom-$F$-manifold algebra, Hom-pre-$F$-manifold algebra, representation theory, quantization, $\huaO$-operators.

\noindent{\textbf{MSC(2020):}} 17A30, 17B61, 17B38, 53D55

\maketitle

\tableofcontents

\setcounter{section}{0}

\allowdisplaybreaks

\section{Introduction}\label{sec:intr}
Algebras of Hom-type appeared in the Physics literature of the 1990's, in the context of quantum
deformations of some algebras of vector fields, such as the Witt and Virasoro algebras, in connection with oscillator algebras (\cite{Aizawa&Sato,Hu}). A quantum deformation consists of replacing the usual
derivation by a $\sigma$-derivation. It turns out that the algebras obtained in this way do not satisfy
the Jacobi identity anymore, but instead they satisfy a modified version involving a homomorphism. This kind of algebras were called Hom-Lie algebras and studied by Hartwig, Larsson and
Silvestrov in \cite{Hartwig&Larsson&Silvestrov,Larsson&Silvestrov}.
 The corresponding associative type objects, called Hom-associative algebras were introduced
by Makhlouf and Silvestrov in \cite{Makhlouf&Silvestrov}. Hom-alternative and Hom-flexible algebras were
introduced  in \cite{Makhlouf&Silvestrov}. Hom-Jordan, Hom-Malcev and Hom-Poisson algebras were studied   in \cite{Yau2010,Yau2012}.
Hom-Lie algebras are widely studied in the following aspects: representation and cohomology theory \cite{Ammar&Mabrouk&Makhlouf,Benayadi&Makhlouf,Sheng,Yau2009}, deformation theory \cite{Makhlouf&Silvestrov1} and categorification theory \cite{Sheng&Chen}.

Rota–Baxter (associative) algebras, originated from the work of G. Baxter \cite{Baxter} in probability and populated
by the work of Cartier and Rota \cite{Cartier,Rota1969,Rota1995}, have also been studied in connection with many areas of mathematics and physics,
including combinatorics, number theory, operators and quantum field theory \cite{Aguiar,Bai&Guo&Ni,Guo2009,Guo2012,Guo&Keigher}. In particular Rota–Baxter algebras
have played an important role in the Hopf algebra approach of renormalization of perturbative quantum field theory of
Connes and Kreimer \cite{Connes&Kreimer,Fard&Guo&Kreimer,Fard&Guo&Manchon}, as well as in the application of the renormalization method in solving divergent problems
in number theory \cite{Guo&Zhang,Manchon&Paycha}. Furthermore, Rota–Baxter operators on a Lie algebra are an operator form of the classical
Yang–Baxter equations and contribute to the study of integrable systems \cite{Bai&Guo&Ni,Bai&Guo&Ni2010}.

The notion of Frobenius manifolds was invented by Dubrov in \cite{DS11} in order to give a geometrical expression of the WDV equations. In 1999, Hertling and Manin \cite{HerMa}
introduced the concept of $F$-manifolds as a relaxation of the conditions of Frobenius manifolds. $F$-manifolds   appear in many fields of mathematics such as singularity theory \cite{Her02}, quantum K-theory \cite{LYP}, integrable systems \cite{DS04,DS11,LPR11},  operad \cite{Merku} and so on.
Inspired by the investigation of algebraic structures of $F$-manifolds, the notion of an $F$-manifold
algebra is given by Dotsenko \cite{Dot} in 2019 to relate the operad $F$-manifold algebras to the operad
pre-Lie algebras.
Recently,  the concept of pre-$F$-manifold algebras, which
gives rise to $F$-manifold algebras, and the notion  of representations of $F$-manifold algebras, are introduced in \cite{Liu&Sheng&Bai}. The notion of $F$-manifold color algebras and theirs proprieties  is given in (\cite{Ming&Chen&Li}).

The paper is organized as follows. In Section \ref{sec:Preliminaries},  we give a backround of Hom-associative algebras, Hom-Lie algebras, Hom-pre-Lie algebras, Hom-Lie-admissible algebras, $F$-manifolds, $F$-manifolds algebras. In Section \ref{sec:F-algebras and deformations},  we define Hom-$F$-manifold-algebras and  w introduce the notion of Hom-$F$-manifold-algebras, which give rise to Hom-$F$-manifold algebras.   In Section \ref{sec:pre-F-algebras},  first we develop  (dual) representations of a Hom-$F$-manifold algebras. Then we introduce the notions of Hom-pre-$F$-manifold algebras and $\huaO$-operators on a Hom-$F$-manifold algebra. We show that on one hand, an $\huaO$-operator on a Hom-$F$-manifold algebra gives a Hom-pre-$F$-manifold algebra. On the other hand, a Hom-pre-$F$-manifold algebra naturally gives an $\huaO$-operator on the sub-adjacent Hom-$F$-manifold algebra.  Section \ref{Hom-Pre-Lie  formal deformation  of commutative Hom-pre-Lie algebras}  is devoted to the study of the   Hom-pre-Lie formal deformations of commutative Hom-associative algebras and prove that Hom-$F$-manifold algebras are the corresponding semi-classical limits. Furthermore, we study  extensions of Hom-pre-Lie $n$-deformations to Hom-pre-Lie $(n+1)$-deformations of a commutative Hom-associative algebra.\\
 In this paper, all the vector spaces are over an algebraically closed field $\mathbb K$ of characteristic $0$.
\section{Preliminaries}\label{sec:Preliminaries}
In this section, we  recall Hom-associative algebras, Hom-Lie algebras, Hom-pre-Lie algebras, Hom-Lie-admissible algebras, $F$-manifolds, $F$-manifolds algebras  (for further details we can refer to \cite{Cai&Sheng,HerMa,Liu&Song&Tang,Makhlouf&Silvestrov,ms2,Yau2010,Yau2012}).

   A  Hom-associative  algebra is a triple $(A,\cdot,\a)$, where $A$ is a vector space,   $\cdot:A\otimes A\longrightarrow A$ and $\a:A\to A$ are  a two linear maps satisfying that for all $x,y,z\in A$, the Hom-associator
$as_\a(x,y,z)=(x\cdot y)\cdot \a(z)-\a(x)\cdot(y\cdot z)=0$,
i.e.
$$\a(x)\cdot (y\cdot  z)=(x\cdot  y)\cdot \a(z).$$

Furthermore, if  $x\cdot y=y\cdot x$ for all $x,y\in A$, then $(A,\cdot,\a)$ is called a commutative Hom-associative algebra.

   A representation of commutative Hom-associative  algebra $(A,\cdot,\alpha)$ on
  a vector space $V$ with respect to $\phi\in\mathfrak{gl}(V)$ is a linear map
  $\mu :A\longrightarrow \mathfrak{gl}(V)$, such that for any
  $x,y\in A$, the following equalities are satisfied:
  \begin{eqnarray}
 \label{representation-ass1} \mu (\alpha(x))  \phi&=&\phi \mu (x),\\\label{representation-ass2}
    \mu(x\cdot y)
   \phi&=&\mu (\alpha(x)) \mu(y).
  \end{eqnarray}

 Let $(V;\mu,\phi)$ be a representation of a commutative Hom-associative algebra $(A,\cdot ,\a)$.
 In the sequel, we always assume that $\phi$ is invertible.
   Define $\mu^*:A\longrightarrow \gl(V^*)$ and $\phi^*:V^*\longrightarrow V^*$ by
$$
 \langle \mu^*(x)\xi,v\rangle=-\langle \xi,\mu(x)v\rangle,\quad \forall ~ x\in A,\xi\in V^*,v\in V.
$$

However, in general $\mu^*$ is not a representation of $A$ anymore. Define $\mu^\star:A\longrightarrow\gl(V^*)$ by
\begin{equation}\label{eq:new1}
 \mu^\star(x)(\xi):=\mu^*(\alpha(x))\big{(}(\phi^{-2})^*(\xi)\big{)},\quad\forall x\in A,\xi\in V^*.
\end{equation}

More precisely, we have
\begin{eqnarray}\label{eq:new1gen}
\langle\mu^\star(x)(\xi),u\rangle=-\langle\xi,\mu(\alpha^{-1}(x))(\phi^{-2}(u))\rangle,\quad\forall x\in A, u\in V, \xi\in V^*.
\end{eqnarray}

\begin{lem}
Under the above notation,   $(V^*,\mu^\star,(\phi^{-1})^*)$ is a representation
 of $(A,\cdot,\alpha)$ which is called the  dual representation of  $(V;\mu,\phi)$.
\end{lem}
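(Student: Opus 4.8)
The plan is to verify directly that the pair $(V^*,\mu^\star,(\phi^{-1})^*)$ satisfies the two axioms \eqref{representation-ass1} and \eqref{representation-ass2} defining a representation of the commutative Hom-associative algebra $(A,\cdot,\alpha)$, using the explicit formula \eqref{eq:new1gen} for $\mu^\star$ together with the representation axioms for $(V;\mu,\phi)$ and the multiplicativity-type relations $\phi\mu(x)=\mu(\alpha(x))\phi$, $\phi^{-1}\mu(x)=\mu(\alpha^{-1}(x))\phi^{-1}$ (the latter obtained by conjugating the former with $\phi^{-1}$ on both sides), which we may use freely since $\phi$ is assumed invertible. All verifications will be done by pairing both sides against an arbitrary $u\in V$ and $\xi\in V^*$ and reducing everything to an identity for the original representation $\mu$.

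First I would check the compatibility with the new twist, i.e.\ that $\mu^\star(\alpha(x))\circ(\phi^{-1})^* = (\phi^{-1})^*\circ\mu^\star(x)$. Evaluating the left-hand side on $\xi$ and pairing with $u$: by \eqref{eq:new1gen} this is $-\langle(\phi^{-1})^*\xi,\mu(\alpha^{-1}\alpha(x))(\phi^{-2}u)\rangle = -\langle\xi,\phi^{-1}\mu(x)\phi^{-2}u\rangle$. For the right-hand side, $\langle(\phi^{-1})^*\mu^\star(x)\xi,u\rangle = \langle\mu^\star(x)\xi,\phi^{-1}u\rangle = -\langle\xi,\mu(\alpha^{-1}(x))\phi^{-2}\phi^{-1}u\rangle = -\langle\xi,\mu(\alpha^{-1}(x))\phi^{-3}u\rangle$. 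These agree precisely because $\phi^{-1}\mu(x) = \mu(\alpha^{-1}(x))\phi^{-1}$, which turns $\phi^{-1}\mu(x)\phi^{-2}$ into $\mu(\alpha^{-1}(x))\phi^{-3}$. So axiom \eqref{representation-ass1} holds.

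Next I would check axiom \eqref{representation-ass2}, namely $\mu^\star(x\cdot y)\circ(\phi^{-1})^* = \mu^\star(\alpha(x))\circ\mu^\star(y)$. Pairing the left side with $u$: $-\langle(\phi^{-1})^*\xi,\mu(\alpha^{-1}(x\cdot y))(\phi^{-2}u)\rangle = -\langle\xi,\phi^{-1}\mu(\alpha^{-1}(x)\cdot\alpha^{-1}(y))\phi^{-2}u\rangle$, where I used that $\alpha$ is an algebra morphism so $\alpha^{-1}$ is too. For the right side, apply \eqref{eq:new1gen} twice: $\langle\mu^\star(\alpha(x))\mu^\star(y)\xi,u\rangle = -\langle\mu^\star(y)\xi,\mu(\alpha^{-1}\alpha(x))\phi^{-2}u\rangle = -\langle\mu^\star(y)\xi,\mu(x)\phi^{-2}u\rangle = \langle\xi,\mu(\alpha^{-1}(y))\phi^{-2}\mu(x)\phi^{-2}u\rangle$. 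The task is then to show $\phi^{-1}\mu(\alpha^{-1}(x)\cdot\alpha^{-1}(y))\phi^{-2} = -\mu(\alpha^{-1}(y))\phi^{-2}\mu(x)\phi^{-2}$ as operators on $V$ — wait, the signs must match, so more carefully the identity to establish is $\phi^{-1}\mu(\alpha^{-1}(x)\cdot\alpha^{-1}(y))\phi^{-2} = \mu(\alpha^{-1}(y))\phi^{-2}\mu(x)\phi^{-2}$. Using $\phi^{-1}\mu(z) = \mu(\alpha^{-1}(z))\phi^{-1}$ on the left gives $\mu(\alpha^{-1}(x)\cdot\alpha^{-1}(y))\phi^{-3}$ — hmm, here I should instead move a $\phi$ the other way; the cleanest route is to first rewrite axiom \eqref{representation-ass2} for $\mu$ as $\mu(x\cdot y) = \mu(\alpha(x))\mu(y)\phi^{-1}$, and by commutativity also $\mu(x\cdot y) = \mu(\alpha(y))\mu(x)\phi^{-1}$, then substitute $x\mapsto\alpha^{-1}(x)$, $y\mapsto\alpha^{-1}(y)$ and push the resulting $\phi$'s through using $\phi^{-1}\mu(z)=\mu(\alpha^{-1}(z))\phi^{-1}$ repeatedly until both sides coincide.

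The main obstacle, and the only real subtlety, is bookkeeping: tracking exactly how many powers of $\phi^{\pm1}$ and how many applications of $\alpha^{\pm1}$ appear on each side, and being disciplined about whether one pushes $\phi$'s to the left or right before invoking commutativity of $\cdot$. There is no conceptual difficulty — every step is forced once one commits to writing $\mu^\star$ via \eqref{eq:new1gen} and reduces to the $\mu$-axioms — but it is easy to drop a twist. I would organize the computation so that both sides of each axiom are rewritten as $\pm\langle\xi, \mu(\alpha^{-1}(\,\cdot\,))\,\phi^{-k}(u)\rangle$ for a single canonical form, making the final comparison immediate.
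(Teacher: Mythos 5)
Your verification of the first axiom \eqref{representation-ass1} is correct, and your reduction of both sides of the second axiom \eqref{representation-ass2} to pairings against $u$ and $\xi$ is also carried out correctly. The problem is the moment where you notice that ``the signs must match'' and silently replace the identity you actually need,
$$\phi^{-1}\mu\big(\alpha^{-1}(x)\cdot\alpha^{-1}(y)\big)\phi^{-2}=-\,\mu(\alpha^{-1}(y))\,\phi^{-2}\mu(x)\,\phi^{-2},$$
by the same identity without the minus sign. The version \emph{without} the minus sign is the one that is provable: pushing $\phi^{-1}$ through $\mu$ via $\phi^{-1}\mu(z)=\mu(\alpha^{-1}(z))\phi^{-1}$ and using \eqref{representation-ass2} together with commutativity shows that both sides then equal $\mu(\alpha^{-1}(x))\mu(\alpha^{-2}(y))\phi^{-4}$. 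Consequently the version you need, with the minus sign, fails whenever $\mu\neq 0$, and what your computation actually establishes is $\mu^\star(x\cdot y)\circ(\phi^{-1})^*=-\,\mu^\star(\alpha(x))\circ\mu^\star(y)$ --- the negative of axiom \eqref{representation-ass2}. The untwisted case makes this vivid: for $\alpha=\mathrm{id}$, $\phi=\mathrm{id}$ one has $\mu^\star(x)=\mu^*(x)=-\mu(x)^t$, so $\mu^\star(x\cdot y)=-\mu(x\cdot y)^t$ while $\mu^\star(x)\mu^\star(y)=+\mu(x\cdot y)^t$.

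So this is not a bookkeeping slip on your part but a genuine sign obstruction: with the convention $\langle\mu^*(x)\xi,v\rangle=-\langle\xi,\mu(x)v\rangle$ adopted in the paper (which is the right convention for Lie representations but not for associative ones), the map $\mu^\star$ itself is not a representation of the commutative Hom-associative algebra; rather $-\mu^\star$ is. This is consistent with how the dual representations are written elsewhere in the paper, always with $-\mu^\star$ in the associative slot, e.g.\ $(V^*;\rho^\star-\mu^\star,-\mu^\star,(\phi^{-1})^*)$ for Hom-pre-Lie algebras and $(V^*;\rho^\star,-\mu^\star,(\phi^{-1})^*)$ for Hom-$F$-manifold algebras. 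To repair the argument you should either prove the statement for $(V^*,-\mu^\star,(\phi^{-1})^*)$ (your computations then go through verbatim, with the signs now matching), or define $\mu^*$ for the associative part without the minus sign. A secondary point: your argument also uses that $\alpha$ is invertible and multiplicative, i.e.\ $\alpha(x\cdot y)=\alpha(x)\cdot\alpha(y)$; these are already needed for \eqref{eq:new1gen} to make sense, but since the definition of a Hom-associative algebra in the paper does not include them, they should be stated explicitly as hypotheses.
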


  A    Hom-Zinbiel algebra  is a triple $(A,\diamond,\alpha)$, where
$A$ is a vector space,   $\diamond:A\otimes A\longrightarrow A$ is
a binary  multiplication   and $\alpha:A\to A$  be a linear map such that for all $x,y,z\in A$,
\begin{equation}
 \alpha(x)\diamond(y\diamond z)=(y\diamond x)\diamond \alpha(z)+(x\diamond y)\diamond \alpha(z).
\end{equation}

 \begin{lem}\label{lem:den-ass}
Let $(A,\diamond,\alpha)$ be a Hom-Zinbiel algebra. Then $(A,\cdot,\alpha)$ is a commutative Hom-associative algebra, where $x\cdot y=x\diamond y+y\diamond x$. Moreover, for $x\in A$, define $L_\diamond(x):A\longrightarrow\gl(A)$ by
\begin{equation}\label{eq:dendriform-rep}
L_\diamond( x)(y)=x\diamond y,\quad\forall~y\in A.
\end{equation}
Then $(A;L_\diamond,\alpha)$ is a representation of the commutative Hom-associative algebra $(A,\cdot,\alpha)$.
\end{lem}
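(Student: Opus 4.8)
The plan is to reduce both claims to one reformulation of the defining Hom-Zinbiel identity. Substituting $x\cdot y=x\diamond y+y\diamond x$ into its right-hand side gives, for all $x,y,z\in A$,
\begin{equation}\label{eq:HZ-key}
(x\cdot y)\diamond\alpha(z)=\alpha(x)\diamond(y\diamond z).
\end{equation}
Since $x\cdot y=y\cdot x$, the left side of \eqref{eq:HZ-key} is symmetric in $x,y$, so \eqref{eq:HZ-key} also yields the ``left-commutativity'' relation $\alpha(x)\diamond(y\diamond z)=\alpha(y)\diamond(x\diamond z)$. After this, the Hom-Zinbiel axiom is not invoked again.

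Commutativity of $\cdot$ is immediate from its definition. For Hom-associativity I would expand $(x\cdot y)\cdot\alpha(z)=(x\cdot y)\diamond\alpha(z)+\alpha(z)\diamond(x\cdot y)$; the first term is $\alpha(x)\diamond(y\diamond z)$ by \eqref{eq:HZ-key}, while expanding $\alpha(z)\diamond(x\cdot y)=\alpha(z)\diamond(x\diamond y)+\alpha(z)\diamond(y\diamond x)$ and applying left-commutativity to each summand turns it into $\alpha(x)\diamond(z\diamond y)+\alpha(y)\diamond(z\diamond x)$. On the other side, $\alpha(x)\cdot(y\cdot z)=\alpha(x)\diamond(y\cdot z)+(y\cdot z)\diamond\alpha(x)=\alpha(x)\diamond(y\diamond z)+\alpha(x)\diamond(z\diamond y)+(y\cdot z)\diamond\alpha(x)$, and the last term equals $\alpha(y)\diamond(z\diamond x)$, once more by \eqref{eq:HZ-key}. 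The two expressions coincide term by term, so $(A,\cdot,\alpha)$ is a commutative Hom-associative algebra.

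For the second assertion I would verify the axioms \eqref{representation-ass1} and \eqref{representation-ass2} of a representation with $\mu=L_\diamond$ and $\phi=\alpha$, using $L_\diamond(x)(y)=x\diamond y$. Evaluating \eqref{representation-ass2} on $z$ unwinds to $(x\cdot y)\diamond\alpha(z)=\alpha(x)\diamond(y\diamond z)$, which is precisely \eqref{eq:HZ-key}. Evaluating \eqref{representation-ass1} on $y$ unwinds to $\alpha(x)\diamond\alpha(y)=\alpha(x\diamond y)$, i.e. multiplicativity of $\alpha$ with respect to $\diamond$, which I would take to be part of the data of a Hom-Zinbiel algebra (it is the only place it is used). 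Hence $(A;L_\diamond,\alpha)$ is a representation of $(A,\cdot,\alpha)$.

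The only step requiring attention is the bookkeeping in the Hom-associativity computation; the structural content lies entirely in \eqref{eq:HZ-key} together with the symmetry of $\cdot$, so I do not expect a genuine obstacle beyond keeping track of the six $\diamond$-terms on each side.
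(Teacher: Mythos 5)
Your proof is correct and is the standard argument; the paper states this lemma in its preliminaries without proof, so there is nothing to compare it against, but your key identity $(x\cdot y)\diamond\alpha(z)=\alpha(x)\diamond(y\diamond z)$ and the resulting left-commutativity are exactly the right reductions, and the six-term bookkeeping for Hom-associativity checks out. Your observation that the representation axiom $\alpha(x)\diamond\alpha(y)=\alpha(x\diamond y)$ requires $\alpha$ to be multiplicative with respect to $\diamond$ --- a condition the paper's definition of Hom-Zinbiel algebra does not state explicitly but evidently assumes, as it does implicitly for its Hom-associative and Hom-Lie algebras (e.g.\ the adjoint representation already needs it) --- is accurate and correctly flagged.
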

   A Hom-Lie  algebra is a triple $( A ,[\cdot,\cdot],\alpha)$ consisting of vector space $ A $, a bilinear mapping $[\cdot,\cdot]: A \times A \rightarrow A $ and  a linear map $\alpha : A \rightarrow A $ such that for  all  $x,y,z\in  A $ we have
\begin{eqnarray}
&& [x,y]=-[y,x],\label{Skewsymetric}\\
&&   \circlearrowleft_{x,y,z}[\alpha(x),[y,z]]= 0~~(\textrm{Hom-Jacobi\ identity}).\label{JacobiIdentity}
\end{eqnarray}
where
$\circlearrowleft_{x,y,z}$ denotes summation over the cyclic permutation on $x,y,z$.

   A representation of Hom-Lie algebra $(A,[\cdot,\cdot],\alpha)$ on
  a vector space $V$ with respect to $\phi\in\mathfrak{gl}(V)$ is a linear map
  $\rho :A\longrightarrow \mathfrak{gl}(V)$, such that for any
  $x,y\in A$, the following equalities are satisfied:
  \begin{eqnarray}
 \label{representation1} \rho (\alpha(x))  \phi&=&\phi  \rho (x),\\\label{representation2}
    \rho([x,y])
   \phi&=&\rho (\alpha(x)) \rho(y)-\rho(\alpha(y)) \rho (x).
  \end{eqnarray}

Let $(V;\rho,\phi)$ be a representation of a Hom-Lie algebra $(A,[\cdot,\cdot],\alpha)$. Assume that $\phi$ is invertible. For all $x\in A,u\in V,\xi\in V^*$, define $\rho^*:A\longrightarrow\gl(V^*)$ as usual by
$$\langle \rho^*(x)(\xi),u\rangle=-\langle\xi,\rho(x)(u)\rangle.$$
Then, define $\rho^\star:A\longrightarrow\gl(V^*)$
\begin{eqnarray}
  \label{eq:1.3}\rho^\star(x)(\xi):=\rho^*(\alpha(x))\big{(}(\phi^{-2})^*(\xi)\big{)}.
\end{eqnarray}

\begin{lem}\cite{Cai&Sheng} 
Let $(V;\rho,\phi)$ be a representation of a Hom-Lie algebra $(A,[\cdot,\cdot],\alpha)$. Then $(V^*;\rho^\star,(\phi^{-1})^*)$ is a representation of $(A,[\cdot,\cdot],\alpha)$, which is called the  dual representation of $(V;\rho,\phi)$.
\end{lem}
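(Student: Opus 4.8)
The plan is to check directly that $(V^*,\rho^\star,(\phi^{-1})^*)$ satisfies the two defining identities (\ref{representation1}) and (\ref{representation2}) of a Hom-Lie representation. The first step is preparatory: recast the definition (\ref{eq:1.3}) in pairing form. Since $(V;\rho,\phi)$ is a representation, (\ref{representation1}) gives $\rho(\alpha(x))\phi=\phi\rho(x)$, i.e. $\rho(\alpha(x))=\phi\rho(x)\phi^{-1}$; as $\phi$ is invertible and, as already implicit in (\ref{eq:new1gen}), we work with $\alpha$ invertible and multiplicative, this iterates to the intertwining identity $\rho(\alpha^{k}(x))=\phi^{k}\rho(x)\phi^{-k}$ for all $k\in\mathbb{Z}$. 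Feeding this and the defining relation of $\rho^{*}$ into (\ref{eq:1.3}) and simplifying $\phi^{-2}\rho(\alpha(x))=\rho(\alpha^{-1}(x))\phi^{-2}$ yields
\[
\langle\rho^\star(x)(\xi),u\rangle=-\langle\xi,\rho(\alpha^{-1}(x))(\phi^{-2}(u))\rangle,\qquad x\in A,\ u\in V,\ \xi\in V^{*},
\]
the Hom-Lie analogue of (\ref{eq:new1gen}); every subsequent computation runs through this formula together with the intertwining identity.

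For (\ref{representation1}) applied to $(\rho^\star,(\phi^{-1})^*)$, I would pair both sides of $\rho^\star(\alpha(x))(\phi^{-1})^*=(\phi^{-1})^*\rho^\star(x)$ with an arbitrary $u\in V$. Using the displayed formula, the left side becomes $-\langle\xi,\rho(\alpha^{-1}(x))(\phi^{-3}(u))\rangle$ after cancelling $\alpha^{-1}\alpha$ and moving $\phi^{-1}$ across $\rho(x)$ by the intertwining identity, and the right side equals the same expression directly; hence they agree. This step is short.

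The substance of the proof is (\ref{representation2}), i.e. $\rho^\star([x,y])(\phi^{-1})^*=\rho^\star(\alpha(x))\rho^\star(y)-\rho^\star(\alpha(y))\rho^\star(x)$. I would again pair with $u\in V$ and expand every term via the displayed formula. On the left one meets $\rho(\alpha^{-1}[x,y])$, which — using multiplicativity, $\alpha[x,y]=[\alpha x,\alpha y]$ — equals $\rho([\alpha^{-1}x,\alpha^{-1}y])$; applying (\ref{representation2}) for $\rho$ with arguments $\alpha^{-1}x,\alpha^{-1}y$ rewrites it as $\big(\rho(x)\rho(\alpha^{-1}y)-\rho(y)\rho(\alpha^{-1}x)\big)\phi^{-1}$. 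On the right, expanding the composites produces $\rho(\alpha^{-1}y)\rho(\alpha^{-2}x)$ and $\rho(\alpha^{-1}x)\rho(\alpha^{-2}y)$ (times suitable powers of $\phi$), the two sign flips coming from dualization combining to $+$. Pushing all powers of $\phi$ to the right through the $\rho$'s by the intertwining identity turns both sides into $\langle\xi,\big(\rho(\alpha^{-1}y)\rho(\alpha^{-2}x)-\rho(\alpha^{-1}x)\rho(\alpha^{-2}y)\big)\phi^{-4}(u)\rangle$, so they coincide. I expect the only real difficulty to be the bookkeeping of the exponents of $\phi$ and $\alpha$ together with keeping the two dualization signs straight; there is no conceptual obstacle once the displayed pairing formula is in hand. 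Consequently $(V^{*},\rho^\star,(\phi^{-1})^*)$ satisfies (\ref{representation1})--(\ref{representation2}) and is a representation of $(A,[\cdot,\cdot],\alpha)$.
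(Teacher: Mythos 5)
Your verification is correct: the pairing formula $\langle\rho^\star(x)(\xi),u\rangle=-\langle\xi,\rho(\alpha^{-1}(x))(\phi^{-2}(u))\rangle$, the $\phi$-exponent bookkeeping, and the cancellation of the two dualization signs in the composite terms all check out, and both sides of \eqref{representation2} do reduce to $\langle\xi,\bigl(\rho(\alpha^{-1}(y))\rho(\alpha^{-2}(x))-\rho(\alpha^{-1}(x))\rho(\alpha^{-2}(y))\bigr)\phi^{-4}(u)\rangle$. The paper itself gives no proof (it defers to the cited reference), and your direct verification is exactly the standard argument; you are also right to flag explicitly that it requires $\alpha$ invertible and multiplicative, which the paper assumes only implicitly.
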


   A  Hom-pre-Lie algebra is a triple $(A,\ast,\a)$, where $A$ is a vector space,    $\ast:A\otimes A\longrightarrow A$ and $\a:A\to A$ are  a two linear maps satisfying   for all $x,y,z\in A$
$$as_\a(x,y,z)=as_\a(y,x,z),$$
or equivalently
$$(x\ast  y)\ast \a(z)-\a(x)\ast (y\ast  z)=(y\ast  x)\ast  \a(z)-\a(y)\ast (x\ast  z).$$

It is obvious that any Hom-associative algebra is a  Hom-pre-Lie algebra. In addition,
a commutative Hom-pre-Lie algebra is Hom-associative.

\begin{lem}
  \label{lem:pre-Lie-Lie} Let $(A,\ast,\a)$ be a Hom-pre-Lie algebra. Define the commutator
$ [x,y]=x\ast y-y\ast x$, then $(A,[\cdot,\cdot],\a)$ is a Hom-Lie algebra,
which is called the  sub-adjacent Hom-Lie algebra of $(A,\ast,\a)$ and denoted by $A^c$. Furthermore, $L_\ast:A\rightarrow
\gl(A)$ defined by
\begin{equation}\label{eq:defiLpreLie}
L_\ast(x)y=x\ast y,\quad \forall x,y\in A
\end{equation}
 gives a representation of $A^c$ on $A$.
\end{lem}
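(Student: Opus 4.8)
\medskip

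\noindent\textbf{Proof proposal.} The plan is to verify, in turn, the two Hom-Lie axioms \eqref{Skewsymetric}--\eqref{JacobiIdentity} for the commutator $[x,y]=x\ast y-y\ast x$, and then the two representation axioms \eqref{representation1}--\eqref{representation2} for the map $L_\ast$ of \eqref{eq:defiLpreLie}. The only input is the defining identity $as_\a(x,y,z)=as_\a(y,x,z)$ of a Hom-pre-Lie algebra, which I will use throughout in the rearranged form
\begin{equation*}
\a(x)\ast(y\ast z)-\a(y)\ast(x\ast z)=(x\ast y)\ast\a(z)-(y\ast x)\ast\a(z),\qquad \forall\, x,y,z\in A,
\end{equation*}
together with the standing multiplicativity $\a(x\ast y)=\a(x)\ast\a(y)$.

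Skew-symmetry of $[\cdot,\cdot]$ is immediate from the definition, so the only substantive point for the first assertion is the Hom-Jacobi identity. For that, first expand
\begin{equation*}
[\a(x),[y,z]]=\a(x)\ast(y\ast z)-\a(x)\ast(z\ast y)-(y\ast z)\ast\a(x)+(z\ast y)\ast\a(x)
\end{equation*}
and sum over the cyclic permutations of $(x,y,z)$. I would then split the twelve resulting summands into the block $S_L$ of the six terms in which $\a(-)$ sits on the left of $\ast$, and the block $S_R$ of the six terms in which it sits on the right; by inspection $S_R$ is precisely the negative of the expression obtained from $S_L$ by moving each $\a(-)$ past the product to the right. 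The key step is to apply the rearranged identity above to the three ``left'' pairs occurring in $S_L$ — feeding in the triples $(x,y,z)$, $(y,z,x)$ and $(z,x,y)$ successively — which converts $S_L$ into a sum of six ``right'' terms; a sign check shows this equals exactly $-S_R$, whence $\circlearrowleft_{x,y,z}[\a(x),[y,z]]=S_L+S_R=0$. This combinatorial bookkeeping — matching each cancelling pair and tracking signs — is the one place that needs attention; everything else is formal.

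For the representation statement, axiom \eqref{representation1} with $\rho=L_\ast$ and $\phi=\a$ reads $\a(x)\ast\a(y)=\a(x\ast y)$, i.e.\ the multiplicativity of $\a$. For \eqref{representation2}, apply both sides to an arbitrary $z\in A$: the left-hand side is $L_\ast([x,y])\a(z)=(x\ast y)\ast\a(z)-(y\ast x)\ast\a(z)$, while the right-hand side is $L_\ast(\a(x))L_\ast(y)z-L_\ast(\a(y))L_\ast(x)z=\a(x)\ast(y\ast z)-\a(y)\ast(x\ast z)$; these coincide by a single application of the rearranged identity. Hence $(A;L_\ast,\a)$ is a representation of the sub-adjacent Hom-Lie algebra $A^c$, completing the argument. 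I do not anticipate any genuine difficulty here, beyond noticing that the multiplicativity of $\a$ is really used for \eqref{representation1}.
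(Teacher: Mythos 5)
Your argument is correct and is the standard one; the paper states this lemma in its preliminaries without proof, so there is nothing to compare it against beyond noting that your route (skew-symmetry by inspection, Hom-Jacobi by cyclically pairing the six ``left'' terms via the identity $\a(x)\ast(y\ast z)-\a(y)\ast(x\ast z)=[x,y]\ast\a(z)$ applied to $(x,y,z)$, $(y,z,x)$, $(z,x,y)$, then matching against $-S_R$) is exactly what the omitted verification amounts to, and your sign bookkeeping checks out. The one genuine subtlety is the one you already flag: axiom \eqref{representation1} for $L_\ast$ reduces to $\a(x)\ast\a(y)=\a(x\ast y)$, which the paper's bare definition of a Hom-pre-Lie algebra does not impose; it is tacitly assumed that $\a$ is multiplicative (as in the regular/multiplicative setting of the cited reference for this lemma), whereas the Hom-Jacobi identity itself needs no multiplicativity, as your computation shows.
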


A  Hom-Lie admissible algebra is a nonassociative Hom-algebra $(A,\ast ,\a)$ whose commutator algebra is a Hom-Lie algebra.
More precisely, it is equivalent to the following condition:
\begin{equation}
  \circlearrowleft_{x,y,z}as_\a(x,y,z)-as_\a(y,x,z)=0,\quad \forall x,y,z\in A.
\end{equation}

Obviously, a Hom-pre-Lie algebra is a Hom-Lie-admissible algebra.

Let $(A,\ast ,\a)$ be a Hom-pre-Lie algebra and $V$  a vector
space. A  representation of $A$ on $V$ with respect to $\phi\in \gl(V)$  consists of a pair
$(\rho,\mu)$, where $\rho:A\longrightarrow \gl(V)$ is a representation
of the Hom-Lie algebra $A^c$ on $V $ with respect $\phi$ and $\mu:A\longrightarrow \gl(V)$ is a linear
map satisfying \begin{align}\label{multiplicativityRep}&\phi\mu(x)=\mu(\a(x))\phi,\\\label{representation condition 2}
 &\rho(\a(x))\mu(y)-\mu(\a(y))\rho(x)=\mu(x\ast  y)\phi-\mu(\alpha(y))\mu(x), \quad \forall~x,y\in A.
\end{align}

We denote such  a representation by $(V;\rho,\mu,\phi)$. Let $R_\ast:A\rightarrow
\gl(A)$,
$R_\ast(x)y=y\ast  x,$ for all $x, y\in A$. Then
$(A;L_\ast,R_\ast,\a)$ is a representation, which we call the
adjoint representation.

Let $(V;\rho,\mu,\phi)$ be a representation of a Hom-pre-Lie algebra $(A,\ast,\alpha)$. Assume that $\phi$ is invertible. For all $x\in A,u\in V,\xi\in V^*$, define $\rho^*:A\longrightarrow\gl(V^*)$ and $\mu^*:A\longrightarrow\gl(V^*)$ as usual by
$$\langle \rho^*(x)(\xi),u\rangle=-\langle\xi,\rho(x)(u)\rangle,\quad \langle \mu^*(x)(\xi),u\rangle=-\langle\xi,\mu(x)(u)\rangle.$$
Then define $\rho^\star:A\longrightarrow\gl(V^*)$ and $\mu^\star:A\longrightarrow\gl(V^*)$ by
\begin{eqnarray}
  \label{eq:1.3}\rho^\star(x)(\xi):=\rho^*(\alpha(x))\big{(}(\phi^{-2})^*(\xi)\big{)},\\
   \label{eq:1.4}\mu^\star(x)(\xi):=\mu^*(\alpha(x))\big{(}(\phi^{-2})^*(\xi)\big{)}.
\end{eqnarray}

\begin{lem}\cite{Liu&Song&Tang}\label{dual-rep}
Under the above notations, $(V^*;\rho^\star-\mu^\star,-\mu^\star,(\phi^{-1})^*)$ is a representation of $(A,\ast,\alpha)$, which is called the    dual representation  of $(V;\rho,\mu,\phi)$.
\end{lem}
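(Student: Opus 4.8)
The plan is to verify directly the three conditions defining a representation of the Hom-pre-Lie algebra $(A,\ast,\alpha)$ for the candidate triple $(\rho',\mu',\phi'):=(\rho^\star-\mu^\star,\,-\mu^\star,\,(\phi^{-1})^*)$. The guiding principle is that after pairing an arbitrary $u\in V$ against each identity and unwinding the definitions \eqref{eq:1.3}--\eqref{eq:1.4} of $\rho^\star,\mu^\star$ one has
\[
\langle\rho^\star(x)\xi,u\rangle=-\langle\xi,\rho(\alpha^{-1}(x))\phi^{-2}u\rangle,\qquad
\langle\mu^\star(x)\xi,u\rangle=-\langle\xi,\mu(\alpha^{-1}(x))\phi^{-2}u\rangle ,
\]
so every identity to be checked turns into one among composites of $\rho(\alpha^{k}(-))$, $\mu(\alpha^{k}(-))$ and powers of $\phi$ acting on $V$, to be deduced from the axioms \eqref{representation2} and \eqref{representation condition 2} for $(V;\rho,\mu,\phi)$ together with \eqref{representation1}, \eqref{multiplicativityRep}, the invertibility of $\phi$ and $\alpha$, and the multiplicativity of $\alpha$.

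For the Hom-Lie part, i.e.\ that $(V^*;\rho^\star-\mu^\star,(\phi^{-1})^*)$ is a representation of the sub-adjacent Hom-Lie algebra $A^c$, I would argue by dualizing on $V$: first show that $(V;\rho-\mu,\phi)$ is itself a representation of $A^c$ on $V$. Its multiplicativity $\phi(\rho-\mu)(x)=(\rho-\mu)(\alpha(x))\phi$ is immediate from \eqref{representation1} and \eqref{multiplicativityRep}; for the Hom-Jacobi condition \eqref{representation2} one expands $(\rho-\mu)([x,y])\phi$, replaces $\mu([x,y])\phi$ by $\mu(x\ast y)\phi-\mu(y\ast x)\phi$, and substitutes for $\mu(x\ast y)\phi$ and $\mu(y\ast x)\phi$ using \eqref{representation condition 2} and its image under $x\leftrightarrow y$; the outcome then matches, term by term, the expansion of $(\rho-\mu)(\alpha(x))(\rho-\mu)(y)-(\rho-\mu)(\alpha(y))(\rho-\mu)(x)$ obtained from \eqref{representation2}. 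Since $\tau\mapsto\tau^\star$ is linear, $(\rho-\mu)^\star=\rho^\star-\mu^\star$, and the dual-representation lemma for Hom-Lie algebras recalled above, applied to the $A^c$-representation $(V;\rho-\mu,\phi)$, yields that $(V^*;\rho^\star-\mu^\star,(\phi^{-1})^*)$ is a representation of $A^c$. The remaining multiplicativity condition \eqref{multiplicativityRep} for $\mu'=-\mu^\star$, namely $(\phi^{-1})^*\mu^\star(x)=\mu^\star(\alpha(x))(\phi^{-1})^*$, is then purely formal: pairing with $u$ turns both sides into $-\langle\xi,\mu(\alpha^{-1}(x))\phi^{-3}u\rangle$.

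The substantial step is the mixed identity \eqref{representation condition 2} for $(\rho',\mu')$. After substituting $\rho'=\rho^\star-\mu^\star$, $\mu'=-\mu^\star$, $\phi'=(\phi^{-1})^*$ and clearing an overall sign, it reads
\[
(\rho^\star-\mu^\star)(\alpha(x))\,\mu^\star(y)-\mu^\star(\alpha(y))\,(\rho^\star-\mu^\star)(x)
=\mu^\star(x\ast y)\,(\phi^{-1})^*+\mu^\star(\alpha(y))\,\mu^\star(x).
\]
Here the $\mu^\star(\alpha(y))\,\mu^\star(x)$ contributions cancel between the two sides; pairing what is left with $u$ and transporting every operator onto $V$ via the formulas above (sliding the powers of $\alpha$ past the powers of $\phi$ by \eqref{representation1} and \eqref{multiplicativityRep}), the identity reduces to the operator equation on $V$
\[
\rho(\alpha^{-1}(x))\,\mu(\alpha^{-2}(y))-\mu(\alpha^{-1}(y))\,\rho(\alpha^{-2}(x))+\mu(\alpha^{-1}(y))\,\mu(\alpha^{-2}(x))=\mu\big(\alpha^{-2}(x\ast y)\big)\phi .
\]
This is precisely \eqref{representation condition 2} applied to the pair $(\alpha^{-2}(x),\alpha^{-2}(y))$, provided one knows $\alpha^{-2}(x)\ast\alpha^{-2}(y)=\alpha^{-2}(x\ast y)$; that last equality is exactly where multiplicativity of $\alpha$ enters.

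I expect the only real difficulty to be bookkeeping rather than conceptual: tracking, after each application of an $\star$-formula, how many copies of $\alpha$ and which negative power of $\phi$ accumulate in front of each argument, and making sure the two arguments of $\ast$ end up carrying the same number of $\alpha$'s so that \eqref{representation condition 2} can be quoted verbatim. Once the translation to $V$ is organized carefully, the whole statement is a formal consequence of the representation axioms for $(V;\rho,\mu,\phi)$ and the invertibility of $\phi$ and $\alpha$.
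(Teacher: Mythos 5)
The paper does not actually prove this lemma --- it is quoted from \cite{Liu&Song&Tang} without argument --- so there is no in-paper proof to compare against; I have therefore checked your verification on its own terms, and it is correct. Your decomposition is sound: $(V;\rho-\mu,\phi)$ is indeed a representation of $A^c$ (the expansion of $(\rho-\mu)([x,y])\phi$ via \eqref{representation2} and the two instances of \eqref{representation condition 2} matches the expansion of $(\rho-\mu)(\alpha(x))(\rho-\mu)(y)-(\rho-\mu)(\alpha(y))(\rho-\mu)(x)$ term by term), linearity of $\tau\mapsto\tau^\star$ gives $(\rho-\mu)^\star=\rho^\star-\mu^\star$, and the Cai--Sheng dual-representation lemma then handles the Hom-Lie part. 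For the mixed identity, the cancellation of the $\mu^\star(\alpha(y))\mu^\star(x)$ terms and the reduction to
\[
\rho(\alpha^{-1}(x))\,\mu(\alpha^{-2}(y))-\mu(\alpha^{-1}(y))\,\rho(\alpha^{-2}(x))+\mu(\alpha^{-1}(y))\,\mu(\alpha^{-2}(x))=\mu\bigl(\alpha^{-2}(x\ast y)\bigr)\phi
\]
both check out (I reproduced the bookkeeping: every composite $\tau^\star(a)\sigma^\star(b)$ pairs to $\sigma(\alpha^{-1}(b))\phi^{-2}\tau(\alpha^{-1}(a))\phi^{-2}$, and sliding the $\phi$'s past $\rho,\mu$ via \eqref{representation1} and \eqref{multiplicativityRep} lands exactly on \eqref{representation condition 2} evaluated at $(\alpha^{-2}(x),\alpha^{-2}(y))$). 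You are also right to flag the one hypothesis doing hidden work: $\alpha^{-2}(x)\ast\alpha^{-2}(y)=\alpha^{-2}(x\ast y)$ requires $\alpha$ to be an invertible algebra morphism, which the paper's definition of a Hom-pre-Lie algebra does not state but which is part of the ``regular'' setting of the cited source and is tacitly assumed throughout this section.
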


Let $(V,\rho,\mu,\phi)$ be a representation of a Hom-pre-Lie algebra $(A,\ast,\alpha)$. The set of $(n+1)$-cochains is given by
\begin{equation}
 C^{n+1}(A;V)=\Hom(\wedge^n A\otimes A,V),\quad
 \forall n\geq 0.
\end{equation}
For all $f \in C^n(A;V),~ x_1,\dots,x_{n+1} \in A$, define the operator
$\partial:C^n(A;V)\longrightarrow C^{n+1}(A;V)$ by
\begin{eqnarray}\label{eq:12.1}
 &&(\partial f)(x_1,\dots,x_{n+1})\\
 \nonumber&=&\sum_{i=1}^n(-1)^{i+1}\rho(x_i)f(\alpha^{-1}(x_1),\dots,\widehat{\alpha^{-1}(x_i)},\dots,\alpha^{-1}(x_{n+1}))\\
\nonumber &&+\sum_{i=1}^n(-1)^{i+1}\mu(x_{n+1})f(\alpha^{-1}(x_1),\dots,\widehat{\alpha^{-1}(x_i)},\dots,\alpha^{-1}(x_n),\alpha^{-1}(x_i))\\
\nonumber &&-\sum_{i=1}^n(-1)^{i+1}\phi f(\alpha^{-1}(x_1),\dots,\widehat{\alpha^{-1}(x_i)}\dots,\alpha^{-1}(x_n),\alpha^{-2}(x_i)\ast \alpha^{-2}(x_{n+1}))\\
\nonumber&&+\sum_{1\leq i<j\leq n}(-1)^{i+j}\phi f([\alpha^{-2}(x_i),\alpha^{-2}(x_j)],\alpha^{-1}(x_1),\dots,\widehat{\alpha^{-1}(x_i)},\dots,\widehat{\alpha^{-1}(x_j)},\dots,\alpha^{-1}(x_{n+1})).
\end{eqnarray}

\begin{thm}\label{thm:operator}
The operator $\partial:C^n(A;V)\longrightarrow C^{n+1}(A;V)$ defined as above satisfies $\partial\circ\partial=0$.
\end{thm}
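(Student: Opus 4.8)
The plan is to reduce this statement to the already-known cohomology of the sub-adjacent Hom-Lie algebra $A^c$ together with a module-theoretic argument, thereby avoiding a direct brute-force verification on the full formula \eqref{eq:12.1}. First I would recall the standard fact (for instance from \cite{Liu&Song&Tang}) that a representation $(V;\rho,\mu,\phi)$ of a Hom-pre-Lie algebra $(A,\ast,\alpha)$ is equivalent to a representation of the semidirect-product Hom-pre-Lie algebra $A\ltimes V$, hence in particular yields a representation of the Hom-Lie algebra $(A\ltimes V)^c = A^c \ltimes V$ on itself; the Chevalley--Eilenberg-type differential of $A^c$ with coefficients in this module is known to square to zero. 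The key observation is that the coboundary operator $\partial$ in \eqref{eq:12.1} is exactly the restriction/projection of the Hom-Lie cohomology differential of $A^c\ltimes V$ (with the $\alpha$-twists built in as in \cite{Sheng}) to the sub-complex of cochains that are $V$-valued and ``linear in the last slot.'' Once this identification is made, $\partial^2=0$ follows immediately from $d_{CE}^2=0$ for the Hom-Lie algebra $A^c\ltimes V$.

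Concretely, the steps I would carry out are: (1) state the Hom-Lie algebra structure on $\g := A^c\ltimes V$ and its adjoint-type action, verifying the multiplicativity and Hom-Jacobi conditions follow from the defining axioms of a Hom-pre-Lie representation, namely \eqref{representation1}, \eqref{representation2} for $\rho$, \eqref{multiplicativityRep}, and \eqref{representation condition 2} for $\mu$; (2) write down the Hom-Lie cochain complex $C^\bullet(\g;\g)$ with its differential $d$, which is the multiplicative-Hom-Lie analogue established in \cite{Sheng,Yau2009} and satisfies $d^2=0$; (3) identify $C^{n}(A;V)=\Hom(\wedge^{n-1}A\otimes A,V)$ with the subspace of $C^n(\g;\g)$ consisting of cochains valued in $V\subset\g$ that are nonzero only when exactly one argument lies in $A$ and the last argument is that one — check that $d$ preserves this subspace and that on it $d$ reproduces formula \eqref{eq:12.1} (the first sum comes from the $\rho$-action terms, the second and third sums from the bracket of $x_{n+1}$ with a $V$-component via $\mu$ and the twisting by $\phi$, and the last sum from the bracket $[\alpha^{-2}(x_i),\alpha^{-2}(x_j)]$ inside $A^c$); (4) conclude $\partial\circ\partial=0$ from $d\circ d=0$.

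The main obstacle is step (3): matching the sign conventions, the placement of the $\alpha^{-1}$ and $\alpha^{-2}$ twists, and the ``hat'' positions between the abstract Hom-Lie differential on $A^c\ltimes V$ and the concrete expression \eqref{eq:12.1}. In particular one must be careful that the term $-\phi f(\dots,\alpha^{-2}(x_i)\ast\alpha^{-2}(x_{n+1}))$ in the third sum — which uses the pre-Lie product $\ast$ rather than the commutator $[\cdot,\cdot]$ — arises correctly: it comes from expanding the bracket in $\g$ between the last slot $x_{n+1}$ (viewed in $A$) and a partially evaluated cochain value in $V$, where the $V$-module structure uses $\mu$ and $\rho$ asymmetrically, and reorganizing $\rho(x_i)\mu(x_{n+1}) - \mu(x_{n+1})\rho(x_i) - \mu(x_i\ast x_{n+1})\phi$-type combinations via \eqref{representation condition 2}. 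If the bookkeeping in step (3) turns out to be as heavy as a direct proof, an acceptable fallback is to verify $\partial^2=0$ directly by a term-by-term expansion, grouping the resulting terms according to which pair $(x_i,x_j)$ or which argument interacts with $x_{n+1}$, and cancelling them in families using \eqref{representation2}, \eqref{representation condition 2}, the Hom-Jacobi identity \eqref{JacobiIdentity}, and multiplicativity of $\alpha$; this is routine but lengthy, so the conceptual route via $A^c\ltimes V$ is preferable.
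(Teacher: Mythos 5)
You should first be aware that the paper contains no proof of this theorem: it sits in the preliminaries and is recalled from \cite{Liu&Song&Tang}, so there is no internal argument to compare yours against. Judged on its own terms, your overall strategy --- reduce $\partial\circ\partial=0$ to the known vanishing of the square of a Chevalley--Eilenberg-type differential for a regular Hom-Lie algebra --- is the right one, and it is essentially how the result is proved in the cited reference. But the central identification in your step (3) is wrong as stated, and that is where the whole plan currently breaks.

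Concretely: a cochain $f\in C^{n+1}(A;V)=\Hom(\wedge^nA\otimes A,V)$ is antisymmetric only in its first $n$ arguments and carries no symmetry involving the last slot, whereas every element of $C^{n+1}(\g;\g)=\Hom(\wedge^{n+1}\g,\g)$ for $\g=A^c\ltimes V$ is totally antisymmetric. Hence $\Hom(\wedge^nA\otimes A,V)$ cannot be realized as a subspace of $\Hom(\wedge^{n+1}\g,\g)$, and your description of the subspace (``nonzero only when exactly one argument lies in $A$ and the last argument is that one'') would in fact single out maps defined on $\wedge^{n}V\wedge A$, which is not the space at hand. The reduction that actually works does not go through the semidirect product $A^c\ltimes V$ at all but through the isomorphism $\Hom(\wedge^nA\otimes A,V)\cong\Hom(\wedge^nA,\Hom(A,V))$: one equips $\Hom(A,V)$ with the $A^c$-action given (before inserting the Hom-twists) by $(x\cdot\varphi)(a)=\rho(x)\varphi(a)+\mu(a)\varphi(x)-\varphi(x\ast a)$, together with the structure map $\varphi\mapsto\phi\circ\varphi\circ\alpha^{-1}$, proves that this is a representation of the Hom-Lie algebra $A^c$ --- this is exactly where \eqref{representation2} and \eqref{representation condition 2} enter, through the vanishing of combinations of the form $\rho(\alpha(x))\mu(a)-\mu(\alpha(a))\rho(x)-\mu(x\ast a)\phi+\mu(\alpha(a))\mu(x)$ --- and then checks that under this isomorphism $\partial$ of \eqref{eq:12.1} becomes the Hom-Lie coboundary operator of $A^c$ with coefficients in $\Hom(A,V)$, with the $\alpha^{-1}$, $\alpha^{-2}$ and $\phi$ placements accounting for the twists. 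Your step (1) is true but is not the lemma you need: the module that makes the argument close is $\Hom(A,V)$, not $V$. Your declared fallback of a direct term-by-term verification would of course also succeed, but as written the conceptual route has a genuine gap at its key step, and the verification that the twisted $A^c$-module structure on $\Hom(A,V)$ is well defined is an unavoidable piece of work that your plan does not currently contain.
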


Denote the set of closed $n$-cochains by $Z^n(A;V)$ and the set of exact $n$-cochains by $B^n(A;V)$. We denote by $H^n(A;V)=Z^n(A;V)/B^n(A;V)$ the corresponding cohomology groups of the Hom-pre-Lie algebra $(A,\ast,\alpha)$ with the coefficient in the representation $(V,\rho,\mu,\phi)$.

In \cite{ms2}, the authors defined a  Hom-Poisson algebra as a tuple $(A,[\cdot,\cdot],\mu,\alpha)$ consists of
a Hom-Lie algebra $(A,[\cdot,\cdot],\alpha)$ and
a commutative Hom-associative algebra $(A,\mu,\alpha)$
obeying to  the   Hom-Leibniz identity
\begin{equation}
\label{homleibniz}
[\alpha(x),y\cdot z]=[x,y]\cdot \alpha(z)+\alpha(y)\cdot [x,z].
\end{equation}

The notion of $F$-manifold is introduced by  C. Hertling and Y. I. Manin  (\cite{HerMa}) were introduced a weak version of Frobenius  manifolds.

 An $F$-manifold is a pair $(M,\circ)$, where $M$ is a manifold, $\circ$ is a smooth bilinear commutative,
associative multiplication on the tangent sheaf $TM$, such that the Hertling-Manin relation holds
\begin{equation*}
P_{X_1\circ  X_2}(X_3,X_4)=X_1\circ P_{X_2}(X_3,X_4) +  X_2\circ P_{X_1}(X_3,X_4) ,
\end{equation*}

where $P_{X_1}(X_2,X_3)=[X_1,X_2\circ X_3]-[X_1,X_2]\circ X_3- X_2\circ[X_1,X_3]$ measures to what extent the product~$\circ$ and the usual Lie bracket of vector fields fail the Poisson algebra axioms.

In \cite{Dot}, the author gives an algebraic description of $F$-manifold constructing in this way   $F$-manifold algebras. By   a definition, an  { $F$-manifold algebra} is a triple $(A,\cdot,[\cdot,\cdot])$, where $(A,\cdot)$ is a commutative associative algebra and $(A,[\cdot,\cdot])$ is a Lie algebra, such that for all $x,y,z,w\in A$, the Hertling-Manin relation holds
:
\begin{equation}\label{eq:HM}
\mathcal L(x\cdot y, z,w)=x\cdot \mathcal L(y,z,w)+y\cdot \mathcal L(x,z,w),
\end{equation}
where $ \mathcal L(x,y,z)$ is the  Leibnizator define by
\begin{equation}
 \mathcal L(x,y,z)=[x,y\cdot z]-[x,y]\cdot z-y\cdot [x,z].
\end{equation}

\section{Hom-$F$-manifold algebras: Definitions and constructions}\label{sec:F-algebras and deformations}


In this section, we introduce the notion of Hom-$F$-manifold algebras and  various examples are given.
\begin{defi}
A {  Hom-$F$-manifold algebra} is a tuple $(A,\cdot,[\cdot,\cdot],\a)$, where $(A,\cdot,\a)$ is a commutative Hom-associative algebra and $(A,[\cdot,\cdot],\a)$ is a Hom-Lie algebra, such that for all $x,y,z,w\in A$:
\begin{equation}\label{eq:HM relation}
\mathcal L(x\cdot y,\a(z),\a(w))=\a^2(x)\cdot \mathcal L(y,z,w)+\a^2(y)\cdot \mathcal L(x,z,w),
\end{equation}
where $ \mathcal L(x,y,z)$ is the Hom-Leibnizator define by
\begin{equation}
 \mathcal L(x,y,z)=[\a(x),y\cdot z]-[x,y]\cdot \a(z)-\a(y)\cdot [x,z].
\end{equation}
The identity  \eqref{eq:HM relation} is called the Hom-Hertling-Manin relation.
\end{defi}
A  Hom-$F$-manifold algebra is called a regular  Hom-$F$-manifold algebra if $\alpha$ is an algebra automorphism.\\
Let  $x,y\in A$ such that $\a(x)=x, \a(y)=y$ then the map $ \mathcal L(\cdot,x,y): A\to A$   is $\a^2$-derivation in the Hom-associative algebra $(A,\cdot,\a)$.

\begin{defi}
Let $(A,\cdot_A,[\cdot,\cdot]_A,\a_A)$ and $(B,\cdot_B,[\cdot,\cdot]_B,\a_B)$ be two Hom-$F$-manifold algebras. A {  homomorphism} between $A$ and $B$ is a linear map $\varphi:A\rightarrow B$ such that
\begin{eqnarray}\varphi\a_A&=&\a_B\varphi,\\
 \varphi(x\cdot_A y)&=&\varphi(x)\cdot_B\varphi(y),\label{10}\\
  \varphi[x,y]_A&=&[\varphi(x),\varphi(y)]_B,\quad \forall~x,y\in A.
\end{eqnarray}
\end{defi}
If Eq.\eqref{10} is not satisfied, then we call $\varphi$ is a weak homomorphism.
\begin{ex}
  Let $(A,\cdot,[\cdot,\cdot])$ be a $F$-manifold algebra and $\a:A\to A$ be a $F$-manifold algebra morphism. Then $(A,\cdot_\a,[\cdot,\cdot]_\a,\a)$ is a Hom-$F$-manifold algebra, where for all $x,y\in A$
  $$
   x\cdot_\a y=\a(x)\cdot \a(y),~~~~~~~[x,y]_\a=[\a(x),\a(y)].
  $$
\end{ex}

\begin{ex}
Any Hom-Poisson algebra is a  Hom-$F$-manifold algebra.

\end{ex}

\begin{ex}\label{ex:direct sum of HMA}{\rm
  Let $(A,\cdot_A,[\cdot,\cdot]_A,\a_A)$ and $(B,\cdot_B,[\cdot,\cdot]_B,\a_B)$ be two Hom-$F$-manifold algebras. Then $(A\oplus B,\cdot_{A\oplus B},[\cdot,\cdot]_{A\oplus B},\a_{A\oplus B})$ is a Hom-$F$-manifold algebra, where the product $\cdot_{A\oplus B}$, the bracket $[\cdot,\cdot]_{A\oplus B}$ and the twist map $\a_{A\oplus B}$ are given by
  \begin{eqnarray*}
   ( x_1+ x_2) \cdot_{A\oplus B} (y_1+ y_2)&=& x_1\cdot_A y_1+x_2\cdot_B y_2,\\
   {[  ( x_1+ x_2), (y_1+ y_2)]_{A\oplus B}}&=&[x_1,y_1]_A+ [x_2, y_2]_B,\\
   \a_{A\oplus B}&=&\a_A+\a_B
  \end{eqnarray*}
  for all $x_1,y_1\in A,x_2,y_2\in B.$}
\end{ex}

\begin{ex}\label{ex:tensor of HMA}{\rm
  Let $(A,\cdot_A,[\cdot,\cdot]_A,\a_A)$ and $(B,\cdot_B,[\cdot,\cdot]_B,\a_B)$ be two Hom-$F$-manifold algebras. Then $(A\otimes B,\cdot_{A\otimes B},[\cdot,\cdot]_{A\otimes B},\a_{A\otimes B})$ is an $F$-manifold algebra, where the product $\cdot_{A\otimes B}$, the  bracket $[\cdot,\cdot]_{A\otimes B}$ and the twist map $\a_{A\otimes B}$ are given by
  \begin{eqnarray*}
   ( x_1\otimes x_2) \cdot_{A\otimes B} (y_1\otimes y_2)&=& (x_1\cdot_A y_1) \otimes (x_2\cdot_B y_2),\\
   {[ x_1\otimes x_2, y_1\otimes y_2]_{A\otimes B}}&=&[x_1,y_1]_A \otimes (x_2\cdot_B y_2)+(x_1\cdot_A y_1) \otimes [x_2,y_2]_B,\\
\a_{A\otimes B}&=&\a_A\otimes\a_B
  \end{eqnarray*}
  for all $x_1,y_1\in A,x_2,y_2\in B.$}
\end{ex}


Now, we introduce the notion of   Hom-$F$-manifold admissible algebras.
\begin{defi}\label{defi:pseudo-pre-HMA}
A  Hom-$F$-manifold admissible algebra  is a tuple $(A,\cdot,\ast,\a)$ such that $(A,\cdot,\a)$ is a commutative Hom-associative algebra and $(A,\ast,\a)$ is a Hom-Lie admissible algebra satisfying for all $x,y,z\in A$,
\begin{equation}\label{eq:pseudo-pre-HM1}
\a(x)\ast (y\cdot  z)-(x\ast  y)\cdot  \a(z)-\a(y)\cdot (x\ast z)=\a(y)\ast(x\cdot z)-(y\ast x)\cdot \a(z)-\a(x)\cdot(y\ast z).
\end{equation}
\end{defi}

\begin{pro}\label{pro:pseudo-pre-HMA-HMA}
  Let $(A,\cdot,\ast,\a)$ be a Hom-$F$-manifold admissible algebra. Then $(A,\cdot,[\cdot,\cdot],\a)$ is a Hom-$F$ manifold algebra, where the bracket $[\cdot,\cdot]$ is given by
\begin{equation}\label{eq:pseudo-bracket}
[x,y]=x\ast y-y\ast x,\quad\forall~x,y\in A.
\end{equation}

\end{pro}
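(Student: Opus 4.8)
The plan is to verify the three defining conditions of a Hom-$F$-manifold algebra for $(A,\cdot,[\cdot,\cdot],\a)$ with $[x,y]=x\ast y-y\ast x$. Two of them come for free: $(A,\cdot,\a)$ is a commutative Hom-associative algebra by hypothesis, and $(A,[\cdot,\cdot],\a)$ is a Hom-Lie algebra because, by the very definition of a Hom-Lie-admissible algebra, the commutator bracket of $\ast$ is a Hom-Lie bracket. So the whole content is the Hom-Hertling--Manin relation \eqref{eq:HM relation} for the Hom-Leibnizator $\mathcal L$, and this is what I would concentrate on.

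As a preparation I would rewrite the Hom-Leibnizator. Substituting $[x,y]=x\ast y-y\ast x$ into $\mathcal L(x,y,z)$ and using commutativity of $\cdot$ gives $\mathcal L=\mathcal L^{\ell}-\mathcal L^{r}$, where
\[
\mathcal L^{\ell}(x,y,z)=\a(x)\ast(y\cdot z)-(x\ast y)\cdot\a(z)-(x\ast z)\cdot\a(y),\qquad
\mathcal L^{r}(x,y,z)=(y\cdot z)\ast\a(x)-(y\ast x)\cdot\a(z)-(z\ast x)\cdot\a(y).
\]
The defining identity \eqref{eq:pseudo-pre-HM1} is exactly the statement that $\mathcal L^{\ell}$ is symmetric in its first two slots; since it is symmetric in its last two slots by commutativity of $\cdot$, it is totally symmetric. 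I would also record the purely formal identity
\[
[a\cdot b,\a(c)]=\a(a)\cdot[b,c]+\a(b)\cdot[a,c]-\mathcal L(c,a,b),
\]
which is just the definition of $\mathcal L$ together with skew-symmetry of $[\cdot,\cdot]$ and commutativity of $\cdot$.

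The relation \eqref{eq:HM relation} then amounts to $\Phi\equiv 0$, where $\Phi(x,y\mid z,w):=\mathcal L(x\cdot y,\a(z),\a(w))-\a^{2}(x)\cdot\mathcal L(y,z,w)-\a^{2}(y)\cdot\mathcal L(x,z,w)$. I would first prove the ``universal'' identity $\Phi(x,y\mid z,w)=-\Phi(z,w\mid x,y)$: this follows by applying the displayed rearrangement (and the definition of $\mathcal L$) to the three bracket terms of $\mathcal L(x\cdot y,\a(z),\a(w))$ and cancelling what remains using Hom-associativity and commutativity of $\cdot$ and multiplicativity of $\a$; at this stage neither \eqref{eq:pseudo-pre-HM1} nor the Jacobi identity is needed, and $\Phi(x,y\mid z,w)$ collapses to $-\mathcal L(z\cdot w,\a(x),\a(y))+\a^{2}(w)\cdot\mathcal L(z,x,y)+\a^{2}(z)\cdot\mathcal L(w,x,y)$. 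To finish I would expand $\mathcal L=\mathcal L^{\ell}-\mathcal L^{r}$ in this last expression and feed in the two hypotheses not yet used: the total symmetry of $\mathcal L^{\ell}$ (which is \eqref{eq:pseudo-pre-HM1}) and the Hom-Jacobi identity $\circlearrowleft_{x,y,z}\big(as_{\a}(x,y,z)-as_{\a}(y,x,z)\big)=0$ for $\ast$ (part of Hom-Lie-admissibility); together these should force $\Phi=0$. I expect this last step to be the only real obstacle: after the expansion one is left with a sum of terms in which $\ast$ and $\cdot$ both occur with three free variables, and the work is to group them so that the totally symmetric $\mathcal L^{\ell}$ absorbs the ``left'' contributions and the cyclic identity for $\ast$ the ``right'' ones, with commutativity, Hom-associativity, and multiplicativity of $\a$ used throughout to keep the $\a$-weights aligned. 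Once $\Phi\equiv 0$ is established, Proposition \ref{pro:pseudo-pre-HMA-HMA} follows.
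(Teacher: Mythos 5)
Your preliminary observations are all correct and verifiable: the Hom-associative and Hom-Lie parts are indeed free; the decomposition $\mathcal L=\mathcal L^{\ell}-\mathcal L^{r}$ is right, and Eq.~\eqref{eq:pseudo-pre-HM1} is exactly the total symmetry of $\mathcal L^{\ell}$; the formal identity for $[a\cdot b,\a(c)]$ holds; and the antisymmetry $\Phi(x,y\mid z,w)=-\Phi(z,w\mid x,y)$ does follow from commutativity, Hom-associativity and multiplicativity of $\a$ alone (the paper's own computation also tacitly uses $\a(x\cdot y)=\a(x)\cdot\a(y)$). The problem is that this identity is not a reduction: the expression you ``collapse'' $\Phi(x,y\mid z,w)$ to, namely $-\mathcal L(z\cdot w,\a(x),\a(y))+\a^{2}(w)\cdot\mathcal L(z,x,y)+\a^{2}(z)\cdot\mathcal L(w,x,y)$, is literally $-\Phi(z,w\mid x,y)$, i.e.\ the original quantity with the two pairs of variables exchanged. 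Proving that it vanishes is therefore exactly the problem you started with; antisymmetry under the swap would only finish the job if you also established symmetry under the swap, which you do not.

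Consequently the entire content of the proposition --- deriving the Hom-Hertling--Manin relation from \eqref{eq:pseudo-pre-HM1} --- is deferred to the unexecuted ``grouping'' step, which you yourself flag as the real obstacle. The paper's proof consists precisely of that computation: it expands $\mathcal L(x\cdot y,\a(z),\a(w))$ and applies \eqref{eq:pseudo-pre-HM1} repeatedly (to $\a(z)\ast(x\cdot y)$, to $\a(w)\ast(x\cdot y)$, to $(\a(z)\cdot\a(w))\ast(\a(x)\cdot\a(y))$, and once more at the end) together with Hom-associativity and commutativity, until $\a^{2}(x)\cdot\mathcal L(y,z,w)+\a^{2}(y)\cdot\mathcal L(x,z,w)$ emerges; no appeal to the Hom-Jacobi identity is made, which casts doubt on your expectation that Lie-admissibility is one of the missing ingredients for this identity (it is needed only to make $[\cdot,\cdot]$ a Hom-Lie bracket). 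Your reformulation of \eqref{eq:pseudo-pre-HM1} as total symmetry of $\mathcal L^{\ell}$ is a genuinely useful way to organize that computation, but until the grouping is actually carried out the proof is incomplete.
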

\begin{proof}
 Let $x,y,z,w\in A$. Using Eq. \eqref{eq:pseudo-pre-HM1}, we have
  \begin{eqnarray*}
    \mathcal L(x,y,z)&=&[\a(x),y\cdot  z] -[x,y] \cdot \a( z)-\a(y)\cdot  [x,z] \\
    &=&\a(x)\ast  (y\cdot  z)-(y\cdot  z)\ast \a( x)-(x\ast  y)\cdot \a( z)+(y\ast  x)\cdot  \a(z)\\
    &&-\a(y)\cdot (x\ast  z)+\a(y)\cdot (z\ast  x)\\
    &=&\a(x)\ast  (y\cdot  z)-(x\ast  y)\cdot  \a(z)-\a(y)\cdot (x\ast  z)-(y\cdot  z)\ast\a(  x)\\
    &&+(y\ast  x)\cdot \a( z)+\a(y)\cdot (z\ast  x)\\
    &=&\a(y)\ast  (x\cdot  z)-(y\ast  x)\cdot\a(  z)-\a(x)\cdot (y\ast  z)-(y\cdot  z)\ast \a( x)\\
    &&+(y\ast  x)\cdot \a( z)+\a(y)\cdot (z\ast  x).
  \end{eqnarray*}
  By this formula and Eq.\eqref{eq:pseudo-pre-HM1}, we have
  \begin{eqnarray*}
   &&\mathcal L(x\cdot  y,\a(z),\a(w))\\
   &=&\a^2(z)\ast  ((x\cdot  y)\cdot  \a(w))-(\a(z)\ast (x\cdot   y))\cdot  \a^2(w)\\
   &&-(\a(x)\cdot  \a(y))\cdot (\a(z)\ast  \a(w))-(\a(z)\cdot  \a(w))\ast  (\a(x)\cdot  \a(y))\\
    &&+(\a(z)\ast  (x\cdot  y))\cdot \a^2( w)+\a^2(z)\cdot (\a(w)\ast  (x\cdot  y))\\
    &=&\a^2(z)\ast  (\a(x)\cdot  (y\cdot  w))-\big((z\ast  x)\cdot  \a(y)-\a(x)\cdot (z\ast  y)+\a(x)\ast (z\cdot  y)-(x\ast  z)\cdot  \a(y)\\
    &&-\a(z)\cdot (x\ast  y)\big)\cdot  \a^2(w)-(\a(x)\cdot  \a(y))\cdot (\a(z)\ast \a(w))\\
    &&-\big(((z\cdot  w)\ast  \a(x))\cdot  \a^2(y)+\a^2(x)\cdot ((z\cdot  w)\ast  \a(y))\\
    &&+\a^2(x)\ast ((z\cdot  w)\cdot  \a(y))-(\a(x)\ast  (z\cdot  w)\cdot \a^2(y))-(\a(z)\cdot \a( w))\cdot  (\a(x)\ast  \a(y))\big)\\
    &&+\big((z\ast  x)\cdot  \a(y)
    +\a(x)\cdot (z\ast  y)+\a(x)\ast (z\cdot  y)-(x\ast  z)\cdot \a(y)-\a(z)\cdot (x\ast  y))\big)\cdot  \a^2(w)\\
    &&+\big((w\ast  x)\cdot  \a(y)+\a(x)\cdot  (w\ast  y)+\a(x)\ast  (w\cdot  y)-(x\ast  w)\cdot \a(y)-\a(w)\cdot (x\ast  y)\big)\cdot  \a^2(z)\\
    &=&\a^2(y)\cdot  \mathcal L(x,z,w)+\a^2(x)\cdot  \big(-\a(w)\cdot  (z\ast  y)-\a(y)\cdot (z\ast  w)-(z\cdot  w)\ast  \a(y)+(z\ast y)\cdot  \a(w)\\
    &&+(w\ast  y)\cdot  \a(z)\big)+\big(\a^2(z)\ast  (\a(x)\cdot  (y\cdot  w))-(\a(z)\ast  \a(x))\cdot  (\a(y)\cdot  \a(w))\\
    &&-\a^2(x)\ast  (\a(z)\cdot  (y\cdot  w))+(\a(x)\ast \a(z))\cdot  (\a(y)\cdot \a(w))+\a^2(z)\cdot (\a(x)\ast (y\cdot  w))\big)\\
    &=&\a^2(y)\cdot  \mathcal L(x,z,w)+\a^2(x)\cdot  \big(-\a(w)\cdot  (z\ast  y)-\a(y)\cdot (z\ast  w)-(z\cdot  w)\ast  \a(y)+(z\ast y)\cdot \a(w)\\
    &&+(w\ast  y)\cdot \a(z)+\big)+\a^2(x)\cdot (\a(z)\ast  (y\cdot  w))\\
    &=&\a^2(y)\cdot  \mathcal L(x,z,w)+\a^2(x)\cdot \mathcal L(y,z,w).
  \end{eqnarray*}
Then, the  Hom-Hertling-Manin identity \eqref{eq:HM relation} holds.

\end{proof}
As a special case of Hom-$F$-manifold  admissible algebras, we define Hom-pre-Lie commutative algebras.
\begin{defi}
A { Hom-pre-Lie commutative algebra  } is a tuple $(A,\cdot,\ast,\a)$, where $(A,\cdot,\a)$ is a commutative Hom-associative algebra and $(A,\ast,\a)$ is a Hom-pre-Lie algebra satisfying
\begin{equation}
  \a(x)\ast(y\cdot z)-(x\ast y)\cdot\a( z)-\a(y)\cdot(x\ast z)=0,\quad\forall~x,y,z\in A.
\end{equation}
\end{defi}
Using Proposition \ref{pro:pseudo-pre-HMA-HMA}, it is  obvious to obtain the following result.
\begin{cor}
  Let $(A,\cdot,\ast,\a)$ be a Hom-pre-Lie commutative algebra. Then $(A,\cdot,[\cdot,\cdot],\a)$ is a Hom-$F$-manifold algebra,
  where $[\cdot,\cdot]$ is given by \eqref{eq:pseudo-bracket}.
\end{cor}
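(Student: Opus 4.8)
The plan is to show that a Hom-pre-Lie commutative algebra is nothing but a particular instance of a Hom-$F$-manifold admissible algebra, and then simply invoke Proposition~\ref{pro:pseudo-pre-HMA-HMA}. Concretely, given a Hom-pre-Lie commutative algebra $(A,\cdot,\ast,\a)$, the pair $(A,\cdot,\a)$ is by hypothesis a commutative Hom-associative algebra, and $(A,\ast,\a)$ is a Hom-pre-Lie algebra, hence in particular a Hom-Lie-admissible algebra (as recalled before Lemma~\ref{lem:pre-Lie-Lie} and the surrounding discussion). So the first two structural requirements in Definition~\ref{defi:pseudo-pre-HMA} are automatic.

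The only point that needs a word of justification is the mixed identity \eqref{eq:pseudo-pre-HM1}. But the defining axiom of a Hom-pre-Lie commutative algebra says precisely that the expression $\a(x)\ast(y\cdot z)-(x\ast y)\cdot\a(z)-\a(y)\cdot(x\ast z)$ vanishes identically for all $x,y,z\in A$; applying this once as written and once with $x$ and $y$ interchanged shows that both sides of \eqref{eq:pseudo-pre-HM1} are equal to $0$, so \eqref{eq:pseudo-pre-HM1} holds trivially. Thus $(A,\cdot,\ast,\a)$ is a Hom-$F$-manifold admissible algebra.

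It then remains only to apply Proposition~\ref{pro:pseudo-pre-HMA-HMA}: it yields that $(A,\cdot,[\cdot,\cdot],\a)$ with $[x,y]=x\ast y-y\ast x$, i.e.\ the bracket defined in \eqref{eq:pseudo-bracket}, satisfies the Hom-Hertling--Manin relation \eqref{eq:HM relation} and is a Hom-$F$-manifold algebra, which is exactly the claim. There is essentially no obstacle here; the content of the corollary is entirely carried by Proposition~\ref{pro:pseudo-pre-HMA-HMA}, and the role of this statement is just to record the specialization to the (strictly) Hom-pre-Lie case, where the two sides of \eqref{eq:pseudo-pre-HM1} vanish separately rather than merely coinciding.
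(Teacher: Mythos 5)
Your proof is correct and follows exactly the paper's route: the defining identity of a Hom-pre-Lie commutative algebra forces both sides of \eqref{eq:pseudo-pre-HM1} to vanish, so the structure is a Hom-$F$-manifold admissible algebra and Proposition \ref{pro:pseudo-pre-HMA-HMA} gives the conclusion. The paper records this as an immediate consequence without further argument, so nothing more is needed.
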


Recall that a derivation of commutative Hom-associative algebra $(A,\cdot,\a)$ is a linear map $D:A\to A$ such that
\begin{align*}
    D \alpha&=\alpha  D\\
    D(x\cdot y)&=D(x)\cdot y+x\cdot D(y),\forall x,y\in A.
\end{align*}

\begin{pro}\label{ex:derivation-HMA}
 Let $(A,\cdot,\a)$ be a commutative Hom-associative algebra with a derivation $D$. Then $(A,\cdot,\ast,\a )$ being a Hom-$F$-manifold admissible algebra where the new product $\ast$ is defined by
  \begin{eqnarray*}
    x\ast  y&=&x\cdot D (y)+\lambda x\cdot y,\quad\forall~x,y\in A
  \end{eqnarray*}
     for any fixed $\lambda\in \K$. In particular, for $\lambda=0$, $(A,\cdot,\ast,\a)$ is a Hom-pre-Lie commutative algebra.
\end{pro}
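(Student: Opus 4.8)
The plan is to substitute $x\ast y=x\cdot D_\lambda(y)$ with $D_\lambda:=D+\lambda\,\Id$, and then reduce everything to bookkeeping with Hom-associativity, commutativity of $\cdot$, and the Leibniz rule for $D$. First I would record the elementary fact that $D_\lambda$ commutes with $\a$ (immediate from $D\a=\a D$), and observe that $\a$ is multiplicative for $\ast$ whenever it is for $\cdot$, since $\a(x\ast y)=\a(x\cdot D_\lambda(y))=\a(x)\cdot D_\lambda(\a(y))=\a(x)\ast\a(y)$. By Definition \ref{defi:pseudo-pre-HMA} (and since $(A,\cdot,\a)$ is commutative Hom-associative by hypothesis) it then remains to check that $(A,\ast,\a)$ is Hom-Lie admissible and that the symmetry relation \eqref{eq:pseudo-pre-HM1} holds.

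For the first point I would in fact establish the stronger claim that $(A,\ast,\a)$ is a Hom-pre-Lie algebra — which suffices, since a Hom-pre-Lie algebra is Hom-Lie admissible, and it also covers the arbitrary-$\lambda$ case needed here. I would expand $as_\a(x,y,z)=(x\ast y)\ast\a(z)-\a(x)\ast(y\ast z)$: Hom-associativity (together with $D_\lambda\a=\a D_\lambda$) turns the first term into $\a(x)\cdot\big(D_\lambda(y)\cdot D_\lambda(z)\big)$, while the Leibniz rule gives $D_\lambda\big(y\cdot D_\lambda(z)\big)=D(y)\cdot D_\lambda(z)+y\cdot D_\lambda\big(D_\lambda(z)\big)$, so the second term is $\a(x)\cdot\big(D(y)\cdot D_\lambda(z)\big)+\a(x)\cdot\big(y\cdot D_\lambda(D_\lambda(z))\big)$. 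Since $D_\lambda(y)-D(y)=\lambda y$ and $D_\lambda\big(D_\lambda(z)\big)-\lambda D_\lambda(z)=D\big(D_\lambda(z)\big)$, the difference collapses to $as_\a(x,y,z)=-\,\a(x)\cdot\big(y\cdot D(D_\lambda(z))\big)$. Finally $\a(x)\cdot(y\cdot w)=(x\cdot y)\cdot\a(w)=(y\cdot x)\cdot\a(w)=\a(y)\cdot(x\cdot w)$ by Hom-associativity and commutativity, so this expression is symmetric in $x$ and $y$; hence $as_\a(x,y,z)=as_\a(y,x,z)$ and $(A,\ast,\a)$ is Hom-pre-Lie, a fortiori Hom-Lie admissible.

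For the relation \eqref{eq:pseudo-pre-HM1} I would set $P_\ast(x,y,z):=\a(x)\ast(y\cdot z)-(x\ast y)\cdot\a(z)-\a(y)\cdot(x\ast z)$ — so that \eqref{eq:pseudo-pre-HM1} reads $P_\ast(x,y,z)=P_\ast(y,x,z)$ — and compute it head-on: expanding $\a(x)\ast(y\cdot z)=\a(x)\cdot\big(D(y)\cdot z+y\cdot D(z)+\lambda\,y\cdot z\big)$ with the Leibniz rule, and normalizing $(x\ast y)\cdot\a(z)$ and $\a(y)\cdot(x\ast z)$ to the form $\a(x)\cdot(\bullet\cdot\bullet)$ via Hom-associativity and commutativity, every $D$-term cancels and one is left with $P_\ast(x,y,z)=-\lambda\,\a(x)\cdot(y\cdot z)=-\lambda\,(x\cdot y)\cdot\a(z)$, which is manifestly symmetric in $x,y$. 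Together with the previous paragraph this shows $(A,\cdot,\ast,\a)$ is a Hom-$F$-manifold admissible algebra. When $\lambda=0$ the very same computation yields $P_\ast\equiv 0$, i.e. the defining relation of a Hom-pre-Lie commutative algebra, and combined with the Hom-pre-Lie property of $\ast$ this gives the last assertion.

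The only genuine difficulty is organizational: the expansions of $as_\a$ and of $P_\ast$ each produce half a dozen terms that have to be brought to a common shape — say $\a(x)\cdot(\bullet\cdot\bullet)$ — before the cancellations become visible, so one must be disciplined about the order in which Hom-associativity, commutativity and the Leibniz rule are applied. Introducing $D_\lambda$ absorbs the parameter $\lambda$ cleanly, and using the single identity $\a(x)\cdot(y\cdot w)=(x\cdot y)\cdot\a(w)$ to detect the $x\leftrightarrow y$ symmetry keeps both verifications short.
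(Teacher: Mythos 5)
Your proposal is correct and follows essentially the same route as the paper: verify that $(A,\ast,\a)$ is a Hom-pre-Lie algebra (hence Hom-Lie admissible) and then compute $\a(x)\ast(y\cdot z)-(x\ast y)\cdot\a(z)-\a(y)\cdot(x\ast z)=-\lambda\,\a(x)\cdot(y\cdot z)$, which is symmetric in $x,y$ and vanishes when $\lambda=0$. The only difference is cosmetic: you spell out, via the substitution $D_\lambda=D+\lambda\,\mathrm{Id}$, the Hom-associator verification that the paper dismisses as ``easy to show.''
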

\begin{proof}
  It is easy to show that $(A,\ast ,\a)$ is a Hom-pre-Lie algebra. Furthermore, by the fact that $(A,\cdot,\alpha)$ is a commutative Hom-associative algebra and  $D$ is a derivation on it, we have
  \begin{eqnarray*}
   && \a(x)\ast (y\cdot z)-(x\ast  y)\cdot \a(z)-\a(y)\cdot(x\ast  z)\\
    &=&\a(x)\cdot D (y\cdot z)+\lambda\a(x)\cdot( y\cdot z)-(x\cdot D (y)+\lambda x\cdot y)\cdot \a( z)-\a(y)\cdot (x\cdot D (z)+\lambda x\cdot z)\\
    &=&-\lambda \a(x)\cdot( y\cdot z).
  \end{eqnarray*}
  Similarly, we have
  $$\a(y)\ast (x\cdot z)-(y\ast  x)\cdot \a(z)-\a(x)\cdot(y\ast  z)=-\lambda \a(x)\cdot( y\cdot z).$$
  Thus $$\a(x)\ast (y\cdot z)-(x\ast  y)\cdot \a(z)-\a(y)\cdot(x\ast  z)=\a(y)\ast (x\cdot z)-(y\ast  x)\cdot \a(z)-\a(x)\cdot(y\ast  z).$$
  Therefore,  $(A,\cdot,\ast ,\a)$ is a Hom-$F$-manifold admissible algebra. When $\lambda=0$, it is obvious that $(A,\cdot,\ast,\a)$ is a Hom-pre-Lie commutative algebra.
\end{proof}

\begin{ex}{
  Let $A$ be a $2$-dimensional vector space  with basis $\{e_1,e_2\}$.  Define the non-zero multiplication by
  \begin{eqnarray*}
  e_1\cdot e_1&=&e_1,\quad e_1\cdot e_2=e_2\cdot e_1=be_2,\quad~b\in \K
  \end{eqnarray*}
  and the linear map $\alpha:A\to A$   by
  $$\alpha(e_1)=e_1, ~~\alpha(e_2)=be_2.$$
 Then $(A,\cdot,\alpha)$  is  commutative Hom-associative algebra.
  It is straightforward to check that the linear map $D:A\to A$ given by
  $$D(e_2)=a e_2,\quad~a\in \K$$ is a derivation on $(A,\cdot,\alpha)$.
  Thus by Proposition \ref{ex:derivation-HMA}, $(A,\cdot,[\cdot,\cdot],\alpha)$
  is a Hom-$F$-manifold algebra, where
  $[e_1,e_2]=a be_2$.

  }
\end{ex}

\begin{ex}\label{ex:3-dimensional F-algebra}{\rm
  Let $A$ be a $3$-dimensional vector space  with basis $\{e_1,e_2,e_3\}$. Define the non-zero multiplication by
  \begin{eqnarray*}
 e_2\cdot e_3=e_3\cdot e_2=b^3e_1,\quad  e_3\cdot e_3=b^2e_2,\quad ~b\in \K
  \end{eqnarray*}
   and the linear map $\alpha:A\to A$   by
  $$\alpha(e_1)=b^3e_1, ~~\alpha(e_2)=b^2e_2, ~~\alpha(e_3)=be_3.$$
 Then $(A,\cdot,\alpha)$  is  commutative Hom-associative algebra.
  It is straightforward to check that the linear map $D:A\to A$ given by
 \begin{eqnarray*}
  D(e_1)=3  e_1,\quad D(e_2)=2  e_2,\quad
  D(e_3)= e_3
  \end{eqnarray*}
  is a derivation on $(A,\cdot,\alpha)$.
  Thus by Proposition \ref{ex:derivation-HMA}, $(A,\cdot,[\cdot,\cdot],\alpha)$
  is a Hom-$F$-manifold algebra, where
  $[e_2,e_3]=-  b^3e_1$.
  }
\end{ex}

\section{Representations of Hom-$F$-manifold algebras and Hom-pre-$F$-manifold algebras}\label{sec:pre-F-algebras}
In this section, first we study representations of a Hom-$F$-manifold algebra. Then we introduce the notions of Hom-pre-$F$-manifold
algebras and   $\huaO$-operators on a Hom-$F$-manifold algebra.

\subsection{Representations of Hom-$F$-manifold algebras}

Let $(A,\cdot ,[\cdot,\cdot],\a)$ be a Hom-$F$-manifold algebra, $(V;\rho,\phi)$ be a
  representation of the Hom-Lie algebra $(A,[\cdot,\cdot],\a)$ and $(V;\mu,\phi)$ be a representation of the commutative Hom-associative algebra $(A,\cdot,\a)$. Define the three linear maps $\mathcal{L}_1,\mathcal{L}_2, \mathcal{L}_3:A\otimes A\otimes V\to V$
  given by
  \begin{eqnarray}
  \label{eq:repH 1}\mathcal{L}_1(x,y,u)&=&\rho(\a(x))\mu(y)(u)-\mu(\a(y)) \rho(x)(u)-\mu([x,y])\phi(u),\\
  \label{eq:repH 2}\mathcal{L}_2(x,y,u)&=&\mu(\a(x))\rho(y)(u)+\mu(\a(y)) \rho(x)(u)-\rho(x\cdot y)\phi(u),\\
\label{eq:repH 3} \mathcal{L}_3(x,y,u)&=&\rho(\a(y))  \mu(x)(u)+\rho(\a(x))  \mu(y)(u)-\rho(x\cdot  y)  \phi(u)
   \end{eqnarray}
  for all $x,y\in A$ and $u\in V$. Note that, if we consider $V=A$, then $\mathcal{L}_1=\mathcal{L}$ and $\mathcal{L}_2=\mathcal{L}\sigma$, where $\sigma(x,y,z)=(z,x,y)$.
\begin{defi}\label{def-rep-manifo}With the above notations, the tuple $(V;\rho,\mu,\phi)$ is a representation of $A$  if the following conditions hold:
   \begin{eqnarray}
     \label{eq:rep 1}\mathcal{L}_1(x\cdot  y,\a(z),\phi(u))=\mu(\a^2(x)) \mathcal{L}_1(y,z,u)+\mu(\a^2(y)) \mathcal{L}_1(x,z,u),\\
     \label{eq:rep 2} \mu(\mathcal L(x,y,z))\phi^2(u)=\mathcal{L}_2(\a(y),\a(z),\mu(x)(u))-\mu(\a^2(x)) \mathcal{L}_2(y,z,u)
   \end{eqnarray}
  for all $x,y,z\in A$ and $u\in V$.
  \end{defi}
  \begin{ex}
  Let $(A,\cdot ,[\cdot,\cdot],\a)$ be a Hom-$F$-manifold algebra. Then $(A,\ad,L,\alpha)$ is a representation of $A$ called the adjoint representation.
  \end{ex}
\begin{ex}
 Let $(V;\rho,\mu,\phi)$ be a representation of a Hom-Poisson algebra $(P,\cdot,[\cdot,\cdot],\alpha)$, i.e. $(V;\rho,\phi)$ is a  representation of the Hom-Lie
 algebra $(P,[\cdot,\cdot],\a)$ and $(V;\mu,\phi)$ is a representation of the commutative Hom-associative algebra $(P,\cdot,\a)$ satisfying
   \begin{eqnarray*}
          &&\mathcal{L}_1(x,y,u)=\mathcal{L}_2(x,y,u)  =0,\quad \forall~x,y,z\in P.
   \end{eqnarray*}
  Then $(V;\rho,\mu,\phi)$ is also a representation of the Hom-$F$-manifold algebra given by this Hom-Poisson algebra $P$.
\end{ex}

\begin{ex}
Let $(V;\rho,\mu)$ be a representation of a $F$-manifold algebra $(A,\cdot ,[\cdot,\cdot])$,  $\a:A\to A$ be an algebra morphism  and $\phi\in \mathfrak{gl}(V)$ such that for all $x\in A$
\begin{align*}
   \phi \rho(x)&=\rho(\alpha(x))\phi,\\
       \phi \mu(x)&=\mu(\alpha(x))\phi.
\end{align*}
Then  $(V;\tilde \rho,\tilde \mu,\phi)$  is   a representation of the Hom-$F$-manifold $(A,\cdot_\a ,[\cdot,\cdot]_\a,\a)$, where \begin{align*}
   \tilde \rho(x)=\rho(\alpha(x))\phi~~\ \ \  \text{and}~~\ \ \
       \tilde \mu(x)=\mu(\alpha(x))\phi.
\end{align*}
\end{ex}
It obvious to obtain the following result.
\begin{pro}\label{pro:semi-direct}
 Let $(A,\cdot ,[\cdot,\cdot],\a)$ be a Hom-$F$-manifold algebra. Then $(V;\rho,\mu,\phi)$ is a representation  of $A$ if and only if $(A\oplus V,\cdot_{\mu},[\cdot,\cdot]_\rho,\a+\phi)$ is
 a  Hom-$F$-manifold algebra, where $(A\oplus V,\cdot_{\mu},\alpha+\phi)$ is the semi-direct product commutative Hom-associative algebra $A\ltimes_{\mu} V$, i.e.
 $$
  (x_1+v_1)\cdot_{\mu}(x_2+v_2)=x_1\cdot  x_2+\mu(x_1)v_2+\mu(x_2)v_1,\quad \forall~ x_1,x_2\in A,~v_1,v_2\in V
$$
$$
(\alpha+\phi)(x_1+v_1)=\a(x_1)+\phi(v_1),~~~\forall x_1\in A, ~~ v_1\in V,
$$
  and $(A\oplus V,[\cdot,\cdot]_\rho,\alpha+\phi)$ is the semi-direct product Hom-Lie algebra $A\ltimes_{\rho} V$, i.e. $$
  [x_1+v_1,x_2+v_2]_\rho=[x_1,x_2]+\rho(x_1)(v_2)-\rho(x_2)(v_1),\quad \forall~x_1,x_2\in A,~v_1,v_2\in V.
$$
\end{pro}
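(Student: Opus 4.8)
The plan is to verify the two structural identities needed for a Hom-$F$-manifold algebra on $A \oplus V$ — namely that $(A\oplus V, \cdot_\mu, \a+\phi)$ is a commutative Hom-associative algebra, that $(A\oplus V, [\cdot,\cdot]_\rho, \a+\phi)$ is a Hom-Lie algebra, and that the Hom-Hertling--Manin relation \eqref{eq:HM relation} holds — and then to show that, once one decomposes every equation into its $A$-component and its $V$-component, the $A$-components reduce to the axioms of $A$ itself while the $V$-components are exactly equations \eqref{eq:rep 1} and \eqref{eq:rep 2} together with the defining conditions of representations of the Hom-associative and Hom-Lie parts. Concretely, I would first recall (this is standard and essentially contained in Lemma~\ref{lem:pre-Lie-Lie}-type semi-direct arguments already cited) that $A\ltimes_\mu V$ is commutative Hom-associative iff $(V;\mu,\phi)$ is a representation of $(A,\cdot,\a)$, and that $A\ltimes_\rho V$ is Hom-Lie iff $(V;\rho,\phi)$ is a representation of $(A,[\cdot,\cdot],\a)$; so the multiplicativity of $\a+\phi$ and the two "base" axioms are immediate, and the content lies entirely in the Hom-Hertling--Manin relation.

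Next I would compute the Hom-Leibnizator $\mathcal L_{A\oplus V}(x_1+v_1, x_2+v_2, x_3+v_3)$ in the semi-direct product. Expanding $[\a(\cdot),\cdot\!\cdot]_\rho - [\cdot,\cdot]_\rho\cdot_\mu \a(\cdot) - \a(\cdot)\cdot_\mu[\cdot,\cdot]_\rho$ and collecting terms, the $A$-component is $\mathcal L(x_1,x_2,x_3)$, and the $V$-component splits, by inspection of which variable carries the $V$-entry, into three pieces governed precisely by $\mathcal L_1$, $\mathcal L_2$, $\mathcal L_3$ of Definition~\ref{def-rep-manifo}: a $v_1$-term contributing (a combination that evaluates to) $\mathcal L_1(x_2,x_3,v_1)$-type expressions, a $v_2$-term giving an $\mathcal L_2$-type expression acting through $\mu$/$\rho$, and a $v_3$-term giving an $\mathcal L_3$-type expression. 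I would then plug this into \eqref{eq:HM relation} with arguments $(x_1+v_1)\cdot_\mu(x_2+v_2)$, $\a(x_3)+\phi(v_3)$, $\a(x_4)+\phi(v_4)$; the $A$-component is the Hom-Hertling--Manin relation of $A$ and holds by hypothesis, while the $V$-component separates into four independent sub-identities according to which of $v_1,v_2,v_3,v_4$ is active. The sub-identity in the $v_3$ (resp. $v_4$) slot is exactly \eqref{eq:rep 1}; the sub-identity in the $v_1$ (resp. $v_2$) slot, after using the associativity/representation relations \eqref{representation-ass1}--\eqref{representation-ass2} and \eqref{representation1}--\eqref{representation2} to move $\mu$ and $\rho$ past $\a$ and $\phi$, rearranges into \eqref{eq:rep 2} (here one uses $\mu(\mathcal L(x,y,z))\phi^2$ as the image of the $A$-component of the Leibnizator under $\mu$, which is why \eqref{eq:rep 2} is phrased with that twist). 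The converse direction is read off the same computation: assuming $A\oplus V$ is a Hom-$F$-manifold algebra, isolating the $V$-components of the three axioms yields precisely the representation axioms for $(V;\rho,\phi)$, $(V;\mu,\phi)$, and equations \eqref{eq:rep 1}, \eqref{eq:rep 2}.

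The main obstacle is purely bookkeeping: the Hom-Leibnizator is trilinear with an asymmetric role for its first argument, and $\cdot_\mu$ and $[\cdot,\cdot]_\rho$ each contribute two cross-terms, so the $V$-component of $\mathcal L_{A\oplus V}$ already has on the order of a dozen terms, and after substituting into \eqref{eq:HM relation} one must correctly match these against the right-hand side $\a^2(\cdot)\cdot_\mu \mathcal L_{A\oplus V}(\cdot,\cdot,\cdot)$, whose $V$-part again mixes $\mu$ acting on a $V$-valued Leibnizator with a $\mu$-of-product acting on a scalar Leibnizator. Keeping track of which twist ($\a$, $\a^2$, $\phi$, $\phi^2$) decorates each factor — and using multiplicativity \eqref{representation-ass1}, \eqref{representation1} and \eqref{multiplicativityRep}-style relations consistently — is where errors creep in; I would organize the verification by fixing, in turn, $v_i \neq 0$ and all other $V$-entries zero, so that each of the four resulting identities is checked in isolation and visibly coincides with \eqref{eq:rep 1} or \eqref{eq:rep 2} (the latter possibly after applying a representation relation and the commutativity of $\cdot$). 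Since the statement is flagged as "obvious" in the paper, I expect no genuine mathematical difficulty beyond this careful term-matching.
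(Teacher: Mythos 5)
The paper offers no proof of this proposition (it is simply declared obvious), and your component-wise verification is the intended argument; it is essentially correct, and your final matching --- the $v_3,v_4$ slots of the four-variable Hom-Hertling--Manin relation on $A\oplus V$ yielding \eqref{eq:rep 1} and the $v_1,v_2$ slots yielding \eqref{eq:rep 2} --- checks out. One bookkeeping correction to your intermediate step: the $V$-component of the semi-direct-product Leibnizator does not involve $\mathcal L_3$ at all; a direct expansion gives
\[
\mathcal L_{A\oplus V}(x_1+v_1,x_2+v_2,x_3+v_3)=\mathcal L(x_1,x_2,x_3)+\mathcal L_2(x_2,x_3,v_1)+\mathcal L_1(x_1,x_3,v_2)+\mathcal L_1(x_1,x_2,v_3),
\]
so the first slot carries $\mathcal L_2$ and the second and third both carry $\mathcal L_1$ (the map $\mathcal L_3$ only enters the paper's dual-representation statements). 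With that relabelling, the $v_1$ (resp.\ $v_2$) sub-identity is \eqref{eq:rep 2} on the nose --- no appeal to the representation axioms is needed to rearrange it --- and the rest of your argument, including the converse read off from the same decomposition, goes through as described.
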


\begin{rmk}
Let $(V;\rho,\mu,\phi)$ be a representation of a Hom-Poisson algebra $(P,\cdot,[\cdot,\cdot],\a)$. Then the tuple $(V^*;\rho^\star,-\mu^\star,(\phi^*)^{-1})$
is also a representation of $P$. But  Hom-$F$-manifold algebras do not have this property.
\end{rmk}
\begin{lem}
  Let $(A,\cdot ,[\cdot,\cdot],\a)$ be a Hom-$F$-manifold algebra and     $(V;\rho,\mu,\phi)$ be a representation of $A$.  Define the two linear maps $\mathcal{L}_1^\star,\mathcal{L}_2^\star:A\otimes A\otimes V^*\to V^*$
  given by
  \begin{eqnarray*}
  \label{eq:repH 1}\mathcal{L}_1^\star(x,y,\xi)&=&-\rho^\star(\a(x))\mu^\star(y)(\xi)+\mu^\star(\a(y)) \rho^\star(x)(\xi)+\mu^\star([x,y])(\phi^{-1})^*(\xi),\\
  \label{eq:repH 2}\mathcal{L}_2^\star(x,y,\xi)&=&-\mu^\star(\a(x))\rho^\star(y)(\xi)-\mu^\star(\a(y)) \rho^\star(x)(\xi)-\rho^\star(x\cdot y)(\phi^{-1})^*(\xi)
   \end{eqnarray*}
  for all $x,y\in A$ and $\xi\in V^*$. Then we have
  $$\langle\mathcal{L}_1^\star(x,y,\xi),u\rangle=\langle\xi,\mathcal{L}_1(\a^{-2}(x),\a^{-2}(y),\phi^{-4}(u))\rangle$$
  and
  $$\langle\mathcal{L}_2^\star(x,y,\xi),u\rangle=-\langle\xi,\mathcal{L}_3(\a^{-2}(x),\a^{-2}(y),\phi^{-4}(u))\rangle.$$
  for all $x,y\in A$ and $u\in V$.
\end{lem}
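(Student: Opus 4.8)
The plan is to prove both identities by a direct, if slightly tedious, computation: expand every starred operator through its defining adjunction relation, push all powers of $\alpha$ and $\phi$ to the outermost position by the representation–twisting identities, and then read off the result against the definitions of $\mathcal{L}_1$ and $\mathcal{L}_3$. First I would collect the tools. Unwinding \eqref{eq:new1gen} and its Hom-Lie counterpart gives, for all $z\in A$, $w\in V$, $\eta\in V^*$, the adjunction formulas $\langle\rho^\star(z)(\eta),w\rangle=-\langle\eta,\rho(\alpha^{-1}(z))(\phi^{-2}(w))\rangle$ and $\langle\mu^\star(z)(\eta),w\rangle=-\langle\eta,\mu(\alpha^{-1}(z))(\phi^{-2}(w))\rangle$, together with $\langle(\phi^{-1})^*(\xi),w\rangle=\langle\xi,\phi^{-1}(w)\rangle$. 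Iterating \eqref{representation1} and \eqref{representation-ass1} gives the commutation rules $\phi^{-k}\rho(z)=\rho(\alpha^{-k}(z))\phi^{-k}$ and $\phi^{-k}\mu(z)=\mu(\alpha^{-k}(z))\phi^{-k}$ for all $k$, and I would also use the multiplicativity of $\alpha$ on the structure operations, namely $\alpha^{-2}([x,y])=[\alpha^{-2}(x),\alpha^{-2}(y)]$ and $\alpha^{-2}(x\cdot y)=\alpha^{-2}(x)\cdot\alpha^{-2}(y)$.

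For the first identity I would pair $\mathcal{L}_1^\star(x,y,\xi)$ with $u\in V$ and handle the three summands separately. Applying the $\rho^\star$-adjunction to $-\rho^\star(\alpha(x))\mu^\star(y)(\xi)$ (the outer $\alpha$ disappears because $\alpha^{-1}(\alpha(x))=x$) and then the $\mu^\star$-adjunction produces $-\langle\xi,\mu(\alpha^{-1}(y))\phi^{-2}\rho(x)\phi^{-2}(u)\rangle$, and the rule $\phi^{-2}\rho(x)=\rho(\alpha^{-2}(x))\phi^{-2}$ turns this into $-\langle\xi,\mu(\alpha^{-1}(y))\rho(\alpha^{-2}(x))\phi^{-4}(u)\rangle$, which is exactly the middle term of $\langle\xi,\mathcal{L}_1(\alpha^{-2}(x),\alpha^{-2}(y),\phi^{-4}(u))\rangle$. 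The term $\mu^\star(\alpha(y))\rho^\star(x)(\xi)$ is processed in the same way and yields the first term $\langle\xi,\rho(\alpha^{-1}(x))\mu(\alpha^{-2}(y))\phi^{-4}(u)\rangle$, and $\mu^\star([x,y])(\phi^{-1})^*(\xi)$ becomes $-\langle\xi,\phi^{-1}\mu(\alpha^{-1}([x,y]))\phi^{-2}(u)\rangle=-\langle\xi,\mu(\alpha^{-2}([x,y]))\phi^{-3}(u)\rangle$, which by multiplicativity of $\alpha$ on the bracket is the third term $-\langle\xi,\mu([\alpha^{-2}(x),\alpha^{-2}(y)])\phi^{-3}(u)\rangle$. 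Summing the three contributions gives $\langle\mathcal{L}_1^\star(x,y,\xi),u\rangle=\langle\xi,\mathcal{L}_1(\alpha^{-2}(x),\alpha^{-2}(y),\phi^{-4}(u))\rangle$.

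For the second identity I would run exactly the same procedure on $\mathcal{L}_2^\star(x,y,\xi)$: the two terms $-\mu^\star(\alpha(x))\rho^\star(y)(\xi)$ and $-\mu^\star(\alpha(y))\rho^\star(x)(\xi)$ give $-\langle\xi,\rho(\alpha^{-1}(y))\mu(\alpha^{-2}(x))\phi^{-4}(u)\rangle$ and $-\langle\xi,\rho(\alpha^{-1}(x))\mu(\alpha^{-2}(y))\phi^{-4}(u)\rangle$, while $-\rho^\star(x\cdot y)(\phi^{-1})^*(\xi)$ becomes $+\langle\xi,\rho(\alpha^{-2}(x)\cdot\alpha^{-2}(y))\phi^{-3}(u)\rangle$ using multiplicativity of $\alpha$ on the product; comparing with \eqref{eq:repH 3} these three terms are precisely $-\langle\xi,\mathcal{L}_3(\alpha^{-2}(x),\alpha^{-2}(y),\phi^{-4}(u))\rangle$, and one sees that it is $\mathcal{L}_3$, not $\mathcal{L}_2$, that appears because of the order in which $\rho$ and $\mu$ end up composed after two nested dualizations. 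I expect the only real difficulty to be bookkeeping: one must be scrupulous about the exact exponent of $\alpha$ and of $\phi$ carried by each operator at each step and about the sign flip introduced by each of the three successive applications of the adjunction formulas; no structural input other than those formulas, the commutation rules, and the multiplicativity of $\alpha$ enters, so in particular the Hom-Hertling-Manin relation is not needed here.
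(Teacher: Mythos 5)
Your computation is correct and is exactly the direct verification the paper leaves implicit (its proof is just ``Straightforward''): unwind the starred operators via the adjunction formula \eqref{eq:new1gen} and its $\rho$-analogue, commute the powers of $\phi$ past $\rho$ and $\mu$ using \eqref{representation1} and \eqref{representation-ass1}, and match terms with $\mathcal{L}_1$ and $\mathcal{L}_3$; all signs and exponents in your term-by-term bookkeeping check out, including the correct observation that the reversal of composition order under dualization is why $\mathcal{L}_3$ rather than $\mathcal{L}_2$ appears. The only assumptions you invoke beyond the displayed formulas --- invertibility and multiplicativity of $\alpha$ --- are the same ones the paper implicitly uses throughout its dual-representation constructions.
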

    \begin{proof}
        Straightforward.
    \end{proof}
\begin{pro}
  Let $(A,\cdot ,[\cdot,\cdot],\a)$ be a Hom-$F$-manifold algebra. If    the tuple $(V;\rho,\mu,\phi)$ is representation of $A$ satisfying the following identities
  \begin{eqnarray}
     \label{eq:corep 1}\mathcal{L}_1(x\cdot  y,z,\phi(u))= \mathcal{L}_1(\a(y),z,  \mu(x)(u)))+ \mathcal{L}_1(\a(x),z,  \mu(y)(u)),\\
     \label{eq:corep 2} \mu(\mathcal L(x,y,z))  \phi^2(u)=\mathcal{L}_3(\a(y),\a(z),  \mu(x)(u))-\mu(\a(x))  \mathcal{L}_3(y,z,u).
   \end{eqnarray}
   Then $(V^*;\rho^\star,-\mu^\star,(\phi^{-1})^*)$ is  a representation of $A$.
\end{pro}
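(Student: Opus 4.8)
The plan is to verify that the tuple $(V^*;\rho^\star,-\mu^\star,(\phi^{-1})^*)$ satisfies each clause of Definition \ref{def-rep-manifo}. That $(V^*;\rho^\star,(\phi^{-1})^*)$ is a representation of the Hom-Lie algebra $(A,[\cdot,\cdot],\a)$ is precisely the dual-representation lemma for Hom-Lie algebras of \cite{Cai&Sheng} recalled in Section \ref{sec:Preliminaries}; and that $(V^*;-\mu^\star,(\phi^{-1})^*)$ is a representation of the commutative Hom-associative algebra $(A,\cdot,\a)$ follows from the dual-representation lemma for commutative Hom-associative algebras, the sign $-$ being exactly what makes the contragredient of a module over a commutative algebra again a module. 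So everything comes down to checking the two Hom-Hertling--Manin identities \eqref{eq:rep 1} and \eqref{eq:rep 2} for this tuple.

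The key tool is the Lemma immediately preceding the statement: it computes the operators $\mathcal{L}_1$ and $\mathcal{L}_2$ attached to the candidate data $(\rho^\star,-\mu^\star,(\phi^{-1})^*)$, namely $\mathcal{L}_1^\star$ and $\mathcal{L}_2^\star$, and it rewrites them through the original representation as $\langle\mathcal{L}_1^\star(x,y,\xi),u\rangle=\langle\xi,\mathcal{L}_1(\a^{-2}(x),\a^{-2}(y),\phi^{-4}(u))\rangle$ and $\langle\mathcal{L}_2^\star(x,y,\xi),u\rangle=-\langle\xi,\mathcal{L}_3(\a^{-2}(x),\a^{-2}(y),\phi^{-4}(u))\rangle$. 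I would pair \eqref{eq:rep 1}, written for the candidate data, against an arbitrary $u\in V$; expand every $\rho^\star,\mu^\star$ via their defining formulas (such as \eqref{eq:new1gen}) and every $\mathcal{L}_1^\star$ via the Lemma; and then push all twist maps onto the arguments using $\phi\mu(x)=\mu(\a(x))\phi$ together with the intertwining relations $\phi\,\mathcal{L}_1(x,y,u)=\mathcal{L}_1(\a(x),\a(y),\phi(u))$ and $\a\,\mathcal L(x,y,z)=\mathcal L(\a(x),\a(y),\a(z))$ (these are the identities already used in the proof of the preceding Lemma, and rest on $\a$ being invertible and multiplicative). After absorbing the accumulated powers of $\a^{-1}$ and $\phi^{-1}$ into the arguments by a substitution (concretely something like $x\mapsto\a^{3}(x)$, $y\mapsto\a^{3}(y)$, $z\mapsto\a^{2}(z)$, $u\mapsto\phi^{6}(u)$), what remains is exactly identity \eqref{eq:corep 1}; since $u$ and $\xi$ are arbitrary this is an equivalence. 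The same computation applied to \eqref{eq:rep 2}, now running through $\mathcal{L}_2^\star$ (so a $-\mathcal{L}_3$ appears) and using in addition $\phi\,\mathcal{L}_3(x,y,u)=\mathcal{L}_3(\a(x),\a(y),\phi(u))$, reduces \eqref{eq:rep 2} for the candidate data to the hypothesis \eqref{eq:corep 2}.

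I expect the only genuine obstacle to be the twist-map bookkeeping: one has to keep careful count of the powers of $\a^{-1}$ and $\phi^{-1}$ contributed by the definitions of $\rho^\star,\mu^\star$ and by the Lemma, and make sure they collapse after the substitution so that both sides of each identity land on \eqref{eq:corep 1} and \eqref{eq:corep 2} with the correct twists and, above all, with no residual sign — the minus sign in $-\mu^\star$, the minus sign in $\langle\mathcal{L}_2^\star(\cdot),u\rangle=-\langle\xi,\mathcal{L}_3(\cdot)\rangle$, and the minus sign in the contragredient must all cancel. Beyond that the proof is essentially formal: as the Remark before the statement notes, $(V^*;\rho^\star,-\mu^\star,(\phi^{-1})^*)$ is in general only a candidate for a representation of $A$, and unwinding the definitions shows that \eqref{eq:corep 1} is nothing but \eqref{eq:rep 1} for that candidate and \eqref{eq:corep 2} nothing but \eqref{eq:rep 2} for it — so these two conditions are exactly what is needed, and the converse holds as well.
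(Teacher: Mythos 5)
Your proposal is correct and follows essentially the same route as the paper: verify the two compatibility conditions of Definition \ref{def-rep-manifo} for the candidate tuple by pairing against $v\in V$, use the preceding Lemma to rewrite $\mathcal{L}_1^\star$ and $\mathcal{L}_2^\star$ through $\mathcal{L}_1$ and $\mathcal{L}_3$ of the original representation at $\a$- and $\phi$-shifted arguments, and observe that the resulting expressions vanish precisely by \eqref{eq:corep 1} and \eqref{eq:corep 2}. Your remark about the sign in $-\mu^\star$ being what makes the dual of a module over the commutative Hom-associative algebra again a module is also accurate and consistent with the statement.
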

\begin{proof}
By direct calculations, for all $x,y,z\in A,v\in V,\xi\in V^*$, we have
\begin{eqnarray*}
 &&\langle\mathcal{L}_1^\star(x\cdot  y,\a(z),(\phi^{-1})^*(\xi))+\mu^\star(\a^2(x)) \mathcal{L}_1^\star(y,z,\xi)+\mu^\star(\a^2(y)) \mathcal{L}_1^\star(x,z,\xi),v\rangle\\
 &=&\langle \xi,\mathcal{L}_1(\a^{-3}(x)\cdot  \a^{-3}(y),\a^{-2}(z),\phi^{-5}(v))- \mathcal{L}_1(\a^{-2}(y),\a^{-2}(z),\mu(\a^{-3}(x))(\phi^{-6}(v)))\\&&- \mathcal{L}_1(\a^{-2}(x),\a^{-2}(z),\mu(\a^{-3}(y))(\phi^{-6}(v))\rangle
\end{eqnarray*}
and
\begin{eqnarray*}
 &&\langle -\mu^\star(\mathcal L(x,y,z))(\phi^{-2})^*(\xi)+\mathcal{L}_2^\star(\a(y),\a(z),\mu^\star(x)(\xi))-\mu^\star(\a^2(x)) \mathcal{L}_2^\star(y,z,\xi),v\rangle\\
 &=&\langle \xi,\mu(\mathcal L(\a^{-3}(x),\a^{-3}(y),\a^{-3}(z)))(\phi^{-4}(v))+\mu(\a^{-2}(x))\mathcal{L}_3(\a^{-3}(y),\a^{-3}(z),\phi^{-6}(u))\\&&- \mathcal{L}_3(\a^{-2}(y),\a^{-2}(z),\mu(\a^{-3}(x))\phi^{-6}(v)\rangle.
\end{eqnarray*}
By Eqs. \eqref{eq:corep 1}, \eqref{eq:corep 2} and the Definition \ref{def-rep-manifo}, the conclusion follows immediately.
\end{proof}

\begin{cor}\label{ex:dual representation}{\rm
  Let $(A,\cdot ,[\cdot,\cdot],\alpha )$ be a Hom-$F$-manifold algebra such that  the following relations hold:
  \begin{eqnarray}
   \label{eq:coh1} \mathcal{L} (x\cdot  y,z,\alpha(w))&=&\mathcal{L} (\a(y),z,x\cdot  w)+\mathcal{L} (\a(x),z,y\cdot  w),\\
    \label{eq:coh2} \mathcal{L} (x, y,z)\cdot \alpha^2(w)&=&\mathcal K(\alpha(y),\alpha(z),x\cdot  w)-\a(x) \cdot  \mathcal K(y,z,w)
  \end{eqnarray}
   for all $x,y,z,w\in A$,
 where $\mathcal K:\otimes^3 A\rightarrow
A$ is defined by
$$\mathcal K(x,y,z)=\circlearrowleft_{x,y,z}[\alpha(x),y\cdot  z],\ \ \forall\ x,\ y,\ z\in A.$$
Then $(A^*;\ad^\star,-L^\star,(\alpha^{-1})^*)$ is a representation of $A$ called the coadjoint representation.}
\end{cor}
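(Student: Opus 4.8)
The plan is to obtain this corollary as a direct specialisation of the preceding Proposition to the adjoint representation $(A;\ad,L,\alpha)$, which has already been recorded as a representation of the Hom-$F$-manifold algebra $A$. Thus the only thing to verify is that, on setting $V=A$, $\rho=\ad$, $\mu=L$ and $\phi=\alpha$ (which of course requires $\alpha$ invertible, as is implicit in forming $(\alpha^{-1})^*$), the two hypotheses \eqref{eq:corep 1}--\eqref{eq:corep 2} of that proposition reduce precisely to the two relations \eqref{eq:coh1}--\eqref{eq:coh2} assumed here; once this is done, the Proposition immediately yields that the dual representation $(A^*;\ad^\star,-L^\star,(\alpha^{-1})^*)$ of $(A;\ad,L,\alpha)$ is a representation of $A$, which is exactly the assertion.

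First I would recall that for $V=A$ one has $\mathcal L_1=\mathcal L$, as already observed in the text: indeed $\mathcal L_1(x,y,z)=[\alpha(x),y\cdot z]-\alpha(y)\cdot[x,z]-[x,y]\cdot\alpha(z)$ is by definition the Hom-Leibnizator. Substituting $\mu(x)(w)=x\cdot w$, $\mu(y)(w)=y\cdot w$ and $\phi(w)=\alpha(w)$ then turns \eqref{eq:corep 1} into $\mathcal L(x\cdot y,z,\alpha(w))=\mathcal L(\alpha(y),z,x\cdot w)+\mathcal L(\alpha(x),z,y\cdot w)$, which is precisely \eqref{eq:coh1}.

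Next I would identify $\mathcal L_3$ with $\mathcal K$ in the adjoint case. Expanding $\mathcal L_3(x,y,z)=\rho(\alpha(y))\mu(x)(z)+\rho(\alpha(x))\mu(y)(z)-\rho(x\cdot y)\phi(z)$ with $\rho=\ad$, $\mu=L$, $\phi=\alpha$ gives $[\alpha(y),x\cdot z]+[\alpha(x),y\cdot z]-[x\cdot y,\alpha(z)]$; using commutativity of $\cdot$ to rewrite $x\cdot z=z\cdot x$ and skew-symmetry of $[\cdot,\cdot]$ to rewrite $-[x\cdot y,\alpha(z)]=[\alpha(z),x\cdot y]$, this becomes $\circlearrowleft_{x,y,z}[\alpha(x),y\cdot z]=\mathcal K(x,y,z)$. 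Consequently, substituting $\mu(x)(w)=x\cdot w$ and $\mu(\alpha(x))(v)=\alpha(x)\cdot v$, the identity \eqref{eq:corep 2} reads $\mathcal L(x,y,z)\cdot\alpha^2(w)=\mathcal K(\alpha(y),\alpha(z),x\cdot w)-\alpha(x)\cdot\mathcal K(y,z,w)$, which is precisely \eqref{eq:coh2}.

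With these two identifications the adjoint representation satisfies the hypotheses of the preceding Proposition exactly when \eqref{eq:coh1}--\eqref{eq:coh2} hold, so that proposition gives that $(A^*;\ad^\star,-L^\star,(\alpha^{-1})^*)$ is a representation of $A$. I do not expect any genuine obstacle: the adjoint representation was already shown to be a representation of $A$ and the Proposition is already available, so the only care needed is in matching the various powers of $\alpha$ (and of $\phi=\alpha$) between the two families of displays, and in keeping the sign and ordering conventions in $\ad^\star$ and $L^\star$ straight.
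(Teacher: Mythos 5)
Your proposal is correct and matches the paper's (implicit) argument exactly: the corollary is stated as an immediate consequence of the preceding proposition applied to the adjoint representation $(A;\ad,L,\alpha)$, and your verification that $\mathcal L_1=\mathcal L$ and $\mathcal L_3=\mathcal K$ in this case, so that \eqref{eq:corep 1}--\eqref{eq:corep 2} become \eqref{eq:coh1}--\eqref{eq:coh2}, is precisely the specialisation the paper relies on.
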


\begin{defi}
  A { coherence Hom-$F$-manifold algebra} is a Hom-$F$-manifold algebra such that Eqs.\eqref{eq:coh1} and  \eqref{eq:coh2} hold.
\end{defi}

\subsection{Hom-pre-$F$-manifold algebras}

Now,  we introduce the notion of Hom-pre-$F$-manifold algebras and give some constructions.
\begin{defi}
  A {  Hom-pre-$F$-manifold algebra} is a tuple $(A,\diamond,\ast,\alpha)$, where $(A,\diamond,\alpha)$ is a Hom-Zinbiel algebra and $(A,\ast,\alpha)$ is a Hom-pre-Lie algebra, such that  the following compatibility conditions hold:
  \begin{align}
    \label{eq:pre-HM 1}&F_1(x\cdot y,\alpha(z),\alpha(w))=\alpha^2(x)\diamond F_1(y,z,w)+\alpha^2(y)\diamond F_1(x,z,w),\\
    \label{eq:pre-HM 2}&\mathcal L(x,y,z)\diamond \alpha^2(w)=F_2(\alpha(y),\alpha(z),x\diamond w)-\alpha^2(x)\diamond F_2(y,z,w)
  \end{align}
 where $F_1,F_2,\mathcal L:\otimes^3 A\longrightarrow A$ are defined by
  \begin{eqnarray}
    F_1(x,y,z)&=&\alpha(x)\ast(y\diamond z)-\alpha(y)\diamond(x\ast z)-[x,y]\diamond \alpha(z),\\
    F_2(x,y,z)&=&\alpha(x)\diamond(y\ast z)+\alpha(y)\diamond(x\ast z)-(x\cdot y)\ast \alpha(z),\\
     \mathcal L(x,y,z)&=&[\a(x),y\cdot z]-[x,y]\cdot \a(z)-\a(y)\cdot [x,z]
  \end{eqnarray}
and the operation $\cdot$ and bracket $[\cdot,\cdot]$ are defined by
\begin{equation}\label{eq:pHM-operations}
  x\cdot y=x\diamond y+y\diamond x,\quad [x,y]=x\ast y-y\ast x,
\end{equation}
for all $x,y,z,w\in A$.
\end{defi}

\begin{rmk}
  If $F_1=F_2=0$ in the above definition of a Hom-pre-$F$-manifold algebra $(A,\diamond,\ast,\alpha)$, then we obtain a  Hom-pre-Poisson algebra ( See \cite{Aguiar,Guo&Zhang&Wang} for more details).
\end{rmk}

\begin{thm}
  Let $(A,\diamond,\ast,\alpha)$ be a Hom-pre-$F$-manifold algebra. Then
   \begin{itemize}
\item[$\rm(i)$]
   $(A,\cdot,[\cdot,\cdot],\alpha)$ is a  Hom-$F$-manifold algebra, where the operation $\cdot$ and bracket $[\cdot,\cdot]$ are given by Eq. \eqref{eq:pHM-operations}, which is called the   sub-adjacent
Hom-$F$-manifold algebra  of $(A,\diamond,\ast,\alpha)$  and denoted by $A^c$.
\item[$\rm(ii)$]$(A;L_\ast,L_\diamond,\alpha)$ is a representation of the sub-adjacent
Hom-$F$-manifold algebras $A^c$, where $L_\ast$ and $L_\diamond$ are given by Eqs. \eqref{eq:defiLpreLie} and \eqref{eq:dendriform-rep}, respectively.
\end{itemize}
\end{thm}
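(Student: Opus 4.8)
The strategy is to reduce both statements to the two compatibility axioms \eqref{eq:pre-HM 1} and \eqref{eq:pre-HM 2} of a Hom-pre-$F$-manifold algebra, the bridge being a decomposition of the Hom-Leibnizator of the sub-adjacent structure in terms of $F_1$ and $F_2$.

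For part $(i)$, the commutative Hom-associative algebra $(A,\cdot,\alpha)$ with $x\cdot y=x\diamond y+y\diamond x$ is supplied by Lemma \ref{lem:den-ass} and the Hom-Lie algebra $(A,[\cdot,\cdot],\alpha)$ with $[x,y]=x\ast y-y\ast x$ by Lemma \ref{lem:pre-Lie-Lie}, so only the Hom-Hertling-Manin relation \eqref{eq:HM relation} remains to be checked. I would first record the identity
$$\mathcal{L}(x,y,z)=F_1(x,y,z)+F_1(x,z,y)+F_2(y,z,x),\qquad\forall~x,y,z\in A,$$
obtained by expanding the definitions of $F_1$, $F_2$, $\mathcal{L}$ and using $x\cdot y=x\diamond y+y\diamond x$, $[x,y]=x\ast y-y\ast x$ together with the commutativity of $\cdot$: the terms $\alpha(x)\ast(y\diamond z)$ and $\alpha(x)\ast(z\diamond y)$ coming from the two copies of $F_1$ combine with the term $-(y\cdot z)\ast\alpha(x)$ of $F_2$ into $[\alpha(x),y\cdot z]$, while the leftover $\diamond$-terms reassemble as $-\alpha(y)\cdot[x,z]-[x,y]\cdot\alpha(z)$. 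The main computation is then: apply this identity to $(x\cdot y,\alpha(z),\alpha(w))$; rewrite the two resulting $F_1$-terms by \eqref{eq:pre-HM 1}; expand $x\cdot y=x\diamond y+y\diamond x$ inside the $F_2$-term and rewrite it by two applications of \eqref{eq:pre-HM 2}, once with the tuple $(x,z,w;y)$ and once with $(y,z,w;x)$ in the roles of $(x,y,z;w)$. Regrouping the output by the left factors $\alpha^2(x)\diamond(\,\cdot\,)$ and $\alpha^2(y)\diamond(\,\cdot\,)$ and invoking the decomposition identity backwards for $\mathcal{L}(y,z,w)$ and $\mathcal{L}(x,z,w)$, the whole expression collapses to $\alpha^2(x)\diamond\mathcal{L}(y,z,w)+\mathcal{L}(y,z,w)\diamond\alpha^2(x)+\alpha^2(y)\diamond\mathcal{L}(x,z,w)+\mathcal{L}(x,z,w)\diamond\alpha^2(y)$, which equals $\alpha^2(x)\cdot\mathcal{L}(y,z,w)+\alpha^2(y)\cdot\mathcal{L}(x,z,w)$, as required by \eqref{eq:HM relation}.

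For part $(ii)$ I would specialise Definition \ref{def-rep-manifo} to $V=A$, $\rho=L_\ast$, $\mu=L_\diamond$ and $\phi=\alpha$. Then $(A;L_\ast,\alpha)$ is a representation of the sub-adjacent Hom-Lie algebra by Lemma \ref{lem:pre-Lie-Lie}, and $(A;L_\diamond,\alpha)$ is a representation of $(A,\cdot,\alpha)$ by Lemma \ref{lem:den-ass}. A direct substitution shows that in this case $\mathcal{L}_1=F_1$ and $\mathcal{L}_2=F_2$, so that conditions \eqref{eq:rep 1} and \eqref{eq:rep 2} of Definition \ref{def-rep-manifo} become verbatim the identities \eqref{eq:pre-HM 1} and \eqref{eq:pre-HM 2}, which hold by hypothesis. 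Hence $(A;L_\ast,L_\diamond,\alpha)$ is a representation of $A^c$.

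The only genuine obstacle is the bookkeeping in part $(i)$: checking the decomposition identity for $\mathcal{L}$ and, above all, carrying out the substitutions into \eqref{eq:pre-HM 2} correctly, keeping track of the twist powers ($\alpha$ versus $\alpha^2$) and of which argument slot receives the fourth variable. Once this is under control, part $(ii)$ reduces to the observation that the representation axioms of Definition \ref{def-rep-manifo}, once specialised, are exactly the pre-$F$-manifold compatibility axioms.
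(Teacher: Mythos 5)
Your proposal is correct and follows essentially the same route as the paper: both rest on the decomposition $\mathcal L(x,y,z)=F_1(x,y,z)+F_1(x,z,y)+F_2(y,z,x)$, verify the Hom-Hertling-Manin relation by applying \eqref{eq:pre-HM 1} once to the pair $(z,w)$ and once to $(w,z)$ and \eqref{eq:pre-HM 2} with the substitutions $(x,z,w;y)$ and $(y,z,w;x)$, and obtain part (ii) by specialising Definition \ref{def-rep-manifo} to $(A;L_\ast,L_\diamond,\alpha)$, where $\mathcal L_1=F_1$ and $\mathcal L_2=F_2$. The only cosmetic difference is that the paper organises the computation as four brackets each vanishing individually, while you rewrite forward and regroup; the content is identical.
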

\begin{proof}
\begin{itemize}
    \item[(i)]
 By Lemma  \ref{lem:den-ass} and Lemma \ref{lem:pre-Lie-Lie},  we deduce that $(A,\cdot,\alpha)$ is a commutative Hom-associative algebra and $(A,[\cdot,\cdot],\alpha)$ is a Hom-Lie algebra.  By direct computation, we obtain
  \begin{equation}\label{eq:pre-HM 3}
 \mathcal L(x,y,z)=F_1(x,y,z)+F_1(x,z,y)+F_2(y,z,x),\quad\forall~x,y,z\in A.
  \end{equation}

  According to Eqs.\eqref{eq:pre-HM 1}, \eqref{eq:pre-HM 2} and \eqref{eq:pre-HM 3}, we get
  \begin{eqnarray*}
    &&\mathcal L(x\cdot y,\alpha(z),\alpha(w))-\alpha^2(x)\cdot \mathcal L(y,z,w)-\alpha^2(y)\cdot \mathcal L(x,z,w)\\
&=&\big( F_1(x\cdot y,\alpha(z),\alpha(w))-\alpha^2(x)\diamond F_1(y,z,w)-\alpha^2(y)\diamond F_1(x,z,w)\big)\\
    &&+\big( F_1(x\cdot y,\alpha(w),\alpha(z))-\alpha^2(x)\diamond F_1(y,w,z)-\alpha^2(y)\diamond F_1(x,w,z)\big)\\
    &&+\big(F_2(\alpha(z),\alpha(w),x\diamond y)-\mathcal L(x,z,w)\diamond \alpha^2(y)-\alpha^2(x)\diamond F_2(z,w,y)\big)\\
    &&+\big(F_2(\alpha(z),\alpha(w),y\diamond x)-\mathcal L(y,z,w)\diamond \alpha^2(x)-\alpha^2(y)\diamond F_2(z,w,x)\big)\\
    &=& 0.
  \end{eqnarray*}
  Thus $(A,\cdot,[\cdot,\cdot],\alpha)$ is a Hom-$F$-manifold algebra.

\item[(ii)] Thanks to  Lemma  \ref{lem:den-ass} and Lemma \ref{lem:pre-Lie-Lie},  $(A;L_\diamond,\alpha)$ is a representation of the commutative Hom-associative algebra $(A,\cdot,\alpha)$ as well as $(A;L_\ast,\alpha)$ is a representation of the sub-adjacent Hom-Lie algebra $A^c$. According to  Eqs. \eqref{eq:pre-HM 1}  and \eqref{eq:pre-HM 2} we can easly check  Eq. \eqref{eq:rep 1} and Eq.\eqref{eq:rep 2}.  Therefore,  $(A;L_\ast,L_{\diamond},\alpha)$ is a representation of the sub-adjacent Hom-$F$-manifold algebra $A^c$.
\end{itemize}
\end{proof}

\subsection{ $\huaO$-operators of Hom-$F$-manifold algebras}

The notion of an $\mathcal O$-operator was first given for Lie algebras by Kupershmidt in \cite{K} as a natural generalization of the classical Yang-Baxter equation and then defined by analogy in other various (associative, alternative, Jordan, ...).

A linear map $T:V\longrightarrow A$ is called an { $\huaO$-operator} on a commutative Hom-associative algebra $(A,\cdot,\alpha)$ with respect to a representation  $(V;\mu,\phi)$ if $T$ satisfies
  \begin{align}
    T \phi  &= \alpha T,\\
    T(u)\cdot  T(v)&=T(\mu(T(u))v+\mu(T(v))u),\quad\forall~u,v\in V.
  \end{align}
In particular, an $\huaO$-operator on a commutative Hom-associative
algebra $(A,\cdot,\alpha )$ with respect to the adjoint representation
is called a { Rota-Baxter operator of weight zero} or briefly a { Rota-Baxter
operator} on $A$.

 It is obvious to obtain the following result.
\begin{pro}
  Let $(A,\cdot,\alpha )$ be a commutative Hom-associative algebra and $(V;\mu,\phi)$ a representation. Let $T:V\rightarrow A$ be an $\huaO$-operator on  $(A,\cdot,\alpha )$ with respect to $(V;\mu,\phi)$. Then there exists a Hom-Zinbiel algebra structure on $V$ given by
$$
    u\diamond v=\mu(T(u))v, \quad\forall~u,v\in V.
$$
\end{pro}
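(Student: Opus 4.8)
The plan is to verify directly that the bilinear map $u\diamond v=\mu(T(u))v$ satisfies the Hom-Zinbiel identity
\[
\phi(u)\diamond(v\diamond w)=(v\diamond u)\diamond\phi(w)+(u\diamond v)\diamond\phi(w),\qquad\forall~u,v,w\in V,
\]
using only the two defining properties of an $\huaO$-operator, namely $T\phi=\alpha T$ and $T(u)\cdot T(v)=T\big(\mu(T(u))v+\mu(T(v))u\big)$, together with the two axioms \eqref{representation-ass1}--\eqref{representation-ass2} of a representation of a commutative Hom-associative algebra and the commutativity and Hom-associativity of $(A,\cdot,\alpha)$. First I would expand the left-hand side: $\phi(u)\diamond(v\diamond w)=\mu(T(\phi(u)))\mu(T(v))w=\mu(\alpha(T(u)))\mu(T(v))w$, where I used $T\phi=\alpha T$. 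By \eqref{representation-ass2} applied to $x=T(u)$, $y=T(v)$, this equals $\mu\big(T(u)\cdot T(v)\big)\phi(w)$, and then the second $\huaO$-operator identity rewrites $T(u)\cdot T(v)=T\big(\mu(T(u))v+\mu(T(v))u\big)=T(u\diamond v+v\diamond u)$. Hence the left-hand side becomes $\mu\big(T(u\diamond v+v\diamond u)\big)\phi(w)=(u\diamond v)\diamond\phi(w)+(v\diamond u)\diamond\phi(w)$, which is exactly the right-hand side.

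The multiplicativity of $\alpha$ (i.e. that $\diamond$ is compatible with $\alpha$ in the sense $\alpha(u\diamond v)=\alpha(u)\diamond\alpha(v)$, if one wants $(V,\diamond,\phi)$ with $\phi$ playing the role of the twist map) I would check separately: $\phi(u)\diamond\phi(v)=\mu(T(\phi(u)))\phi(v)=\mu(\alpha(T(u)))\phi(v)=\phi\big(\mu(T(u))v\big)=\phi(u\diamond v)$, using $T\phi=\alpha T$ in the second step and the compatibility \eqref{representation-ass1} rewritten as $\mu(\alpha(x))\phi=\phi\mu(x)$ in the third. This confirms that the twist map of the Hom-Zinbiel structure on $V$ is indeed $\phi$.

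The computation is short and essentially forced, so there is no serious obstacle; the only point requiring care is bookkeeping the direction in which \eqref{representation-ass1} and \eqref{representation-ass2} are applied and making sure the $\phi$'s land in the right places (in particular that \eqref{representation-ass2} is used with $\mu(\alpha(x))\mu(y)=\mu(x\cdot y)\phi$, not the reverse). Once the key display $\mu(\alpha(T(u)))\mu(T(v))w=\mu(T(u)\cdot T(v))\phi(w)$ is in hand, both the Hom-Zinbiel identity and the $\alpha$-compatibility follow by substitution, so I would organize the proof around that single identity.
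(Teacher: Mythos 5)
Your proof is correct and is the direct verification the paper omits (the paper simply labels this result ``obvious'' and gives no proof): the chain $\phi(u)\diamond(v\diamond w)=\mu(\alpha(T(u)))\mu(T(v))w=\mu(T(u)\cdot T(v))\phi(w)=\mu(T(u\diamond v+v\diamond u))\phi(w)$ is exactly the intended argument, and your separate check of $\phi(u)\diamond\phi(v)=\phi(u\diamond v)$ via $T\phi=\alpha T$ and $\mu(\alpha(x))\phi=\phi\mu(x)$ is a worthwhile addition. No gaps.
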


A linear map $T:V\longrightarrow   A$ is called an { $\huaO$-operator} on a Hom-Lie algebra $(A,[\cdot,\cdot],\alpha)$ with respect to a representation $(V;\rho,\phi)$ if $T$ satisfies
\begin{align}
  T \phi &=  \alpha T,\label{O-operator1}\\
  [T(u), T(v)] &=T\Big(\rho(T(u))(v)-\rho(T(v))(u)\Big),\quad \forall~u,v\in V.\label{O-operator2}
\end{align}
In particular, an $\huaO$-operator on a Hom-Lie algebra
$(  A,[\cdot,\cdot] ,\alpha)$ with respect to the adjoint representation is called a {
Rota-Baxter operator of weight zero} or briefly a {  Rota-Baxter operator} on $  A$.

\begin{lem}{\rm(\cite{Liu&Song&Tang})}
Let $T:V\to   A$ be an $\huaO$-operator  on a Hom-Lie algebra $(  A,[\cdot,\cdot],\alpha )$ with respect to a representation $(V;\rho,\phi)$. Define a multiplication $\ast$ on $V$ by
\begin{equation}
  u\ast v=\rho(T(u))(v),\quad \forall~u,v\in V.
\end{equation}
Then $(V,\ast,\alpha)$ is a Hom-pre-Lie algebra.
 \end{lem}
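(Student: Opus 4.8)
The plan is to verify directly that the triple $(V,\ast,\alpha)$ with $u\ast v=\rho(T(u))(v)$ satisfies the two axioms of a Hom-pre-Lie algebra: the multiplicativity $\alpha(u\ast v)=\alpha(u)\ast\alpha(v)$ and the Hom-pre-Lie identity $as_\alpha(u,v,w)=as_\alpha(v,u,w)$. First I would check multiplicativity: using $T\phi=\alpha T$ from \eqref{O-operator1} and the representation condition \eqref{representation1}, namely $\rho(\alpha(x))\phi=\phi\rho(x)$, one computes $\alpha(u)\ast\alpha(v)=\rho(T(\alpha(u)))(\alpha(v))$. But here I must be careful: the $\alpha$ appearing on $V$ should really be the twist map of the putative Hom-pre-Lie algebra on $V$, which is $\phi$, not $\alpha$. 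I expect the intended statement is that $(V,\ast,\phi)$ is a Hom-pre-Lie algebra; I would proceed on that reading. Then $\phi(u)\ast\phi(v)=\rho(T(\phi(u)))(\phi(v))=\rho(\alpha(T(u)))(\phi(v))=\phi\rho(T(u))(v)=\phi(u\ast v)$, using \eqref{O-operator1} and \eqref{representation1} in turn.

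The main work is the Hom-pre-Lie identity. I would expand $as_\phi(u,v,w)=(u\ast v)\ast\phi(w)-\phi(u)\ast(v\ast w)$. Writing everything via $\rho$ and $T$, the first term is $\rho(T(\rho(T(u))v))\rho(T(w))(w)$ — more precisely $\rho\bigl(T(\rho(T(u))(v))\bigr)(\phi(w))$ — and the second is $\rho(T(\phi(u)))\rho(T(v))(w)=\rho(\alpha(T(u)))\rho(T(v))(w)$. Now I invoke the $\huaO$-operator relation \eqref{O-operator2}: $T(\rho(T(u))(v))$ differs from $T(\rho(T(v))(u))$ by $[T(u),T(v)]$, so $T(\rho(T(u))(v))=\tfrac12\bigl([T(u),T(v)]+T(\rho(T(u))v)+T(\rho(T(v))u)\bigr)$ — actually cleaner is to just substitute directly: applying $\rho$ to \eqref{O-operator2} gives a bridge between $\rho(T(\rho(T(u))v-\rho(T(v))u))$ and $\rho([T(u),T(v)])$. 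Combining this with the Hom-Lie representation axiom \eqref{representation2}, $\rho([x,y])\phi=\rho(\alpha(x))\rho(y)-\rho(\alpha(y))\rho(x)$, applied with $x=T(u),y=T(v)$, should let me rewrite $\rho(T(\rho(T(u))v))(\phi(w))-\rho(T(\rho(T(v))u))(\phi(w))$ as $\rho(\alpha(T(u)))\rho(T(v))(w)-\rho(\alpha(T(v)))\rho(T(u))(w)$. Subtracting, one finds $as_\phi(u,v,w)-as_\phi(v,u,w)=0$, which is exactly the Hom-pre-Lie condition.

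Concretely, I would organize the computation as: (1) show $as_\phi(u,v,w)=\rho\bigl(T(\rho(T(u))(v))\bigr)(\phi(w))-\rho(\alpha(T(u)))\rho(T(v))(w)$; (2) symmetrize in $u,v$ and subtract, so the difference becomes $\rho\bigl(T(\rho(T(u))(v)-\rho(T(v))(u))\bigr)(\phi(w))-\rho(\alpha(T(u)))\rho(T(v))(w)+\rho(\alpha(T(v)))\rho(T(u))(w)$; (3) use \eqref{O-operator2} to replace the argument of the outer $\rho$ by $[T(u),T(v)]$, obtaining $\rho([T(u),T(v)])(\phi(w))-\bigl(\rho(\alpha(T(u)))\rho(T(v))-\rho(\alpha(T(v)))\rho(T(u))\bigr)(w)$; (4) observe this vanishes by \eqref{representation2}. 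The one genuine subtlety — the only place I expect to pause — is keeping the twist maps straight: every time $T$ is applied after a $\rho(T(u))$, one should use $T\phi=\alpha T$ to push $\phi$'s through, and every time an $\alpha$ lands on $T(u)$ inside a $\rho$, the representation multiplicativity \eqref{representation1} converts $\rho(\alpha(x))\phi$ into $\phi\rho(x)$. Provided these are tracked carefully the identity closes up with no residual terms.
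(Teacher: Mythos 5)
Your verification is correct, and it is the standard direct computation: the paper itself gives no proof of this lemma (it is imported from the cited reference \cite{Liu&Song&Tang}), so there is nothing to diverge from. Your steps (1)--(4) close up exactly as claimed, with \eqref{O-operator2} converting $T(\rho(T(u))(v)-\rho(T(v))(u))$ into $[T(u),T(v)]$ and \eqref{representation2} then cancelling the remaining terms; your observation that the twist on $V$ must be $\phi$ rather than $\alpha$ (a typo in the statement, consistent with the paper's later use of $(V,\diamond,\ast,\phi)$ in Theorem \ref{thm:pre-F-algebra and F-algebra}) is also right. The brief detour involving the factor $\tfrac12$ is unnecessary, as you yourself note, but it does not affect the final argument.
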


Let $(V;\rho,\mu,\phi)$ be a representation of a Hom-$F$-manifold algebra $(A,\cdot ,[\cdot,\cdot],\alpha )$.
\begin{defi}
 A linear operator $T:V\longrightarrow A$ is called  an  $\huaO$-operator on $(A,\cdot ,[\cdot,\cdot],\alpha )$ if $T$ is both an $\huaO$-operator on the commutative Hom-associative algebra $(A,\cdot,\alpha )$ and an $\huaO$-operator on the Hom-Lie algebra $(A,[\cdot,\cdot],\alpha )$.\end{defi}
In particular,   a linear operator $\mathcal R:A\longrightarrow A$ is
called a {
Rota-Baxter operator of weight zero} or briefly a { Rota-Baxter
operator} on $A$, if $\mathcal R$ is both a Rota-Baxter operator on
the commutative Hom-associative algebra $(A,\cdot,\alpha )$ and a
Rota-Baxter operator on the Hom-Lie algebra $(A,[\cdot,\cdot],\alpha)$.

In the following result, we give the construction of Hom-pre-$F$-manifold algebra using an $\huaO$-operator on a Hom-$F$-manifold algebra.

\begin{thm}\label{thm:pre-F-algebra and F-algebra}
Let $(A,\cdot ,[\cdot,\cdot],\alpha)$ be a Hom-$F$-manifold algebra and $T:V\longrightarrow
A$ an $\huaO$-operator on  $A$ with respect to the representation $(V;\rho,\mu,\phi)$. Define new operations $\diamond$ and $\ast$ on $V$ by
$$ u\diamond v=\mu(T(u))v,\quad u\ast v=\rho(T(u))v,\ \forall \ u,v\in V.$$
Then $(V,\diamond,\ast,\phi)$ is a Hom-pre-$F$-manifold algebra and $T$ is a homomorphism from $V^c$ to \\$(A,\cdot ,[\cdot,\cdot],\alpha)$.
\end{thm}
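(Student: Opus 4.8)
The plan is to transport every structure map on $V$ to $A$ along $T$, and then read off the two compatibility axioms from the representation axioms \eqref{eq:rep 1}--\eqref{eq:rep 2} of $(V;\rho,\mu,\phi)$.

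First I would handle the two ``halves'' separately. Since $T$ is an $\huaO$-operator on the commutative Hom-associative algebra $(A,\cdot,\alpha)$, the preceding proposition on $\huaO$-operators for commutative Hom-associative algebras gives that $(V,\diamond,\phi)$ with $u\diamond v=\mu(T(u))v$ is a Hom-Zinbiel algebra; since $T$ is also an $\huaO$-operator on the Hom-Lie algebra $(A,[\cdot,\cdot],\alpha)$, the lemma of \cite{Liu&Song&Tang} recalled above gives that $(V,\ast,\phi)$ with $u\ast v=\rho(T(u))v$ is a Hom-pre-Lie algebra. Writing $\cdot_V$ and $[\cdot,\cdot]_V$ for the operations on $V$ obtained from $\diamond,\ast$ by \eqref{eq:pHM-operations}, the defining equations of an $\huaO$-operator read exactly $T\phi=\alpha T$, $T(u\cdot_V v)=T(u)\cdot T(v)$ and $T([u,v]_V)=[T(u),T(v)]$, so $T$ is a homomorphism of the underlying commutative Hom-associative algebras and of the underlying Hom-Lie algebras; once we have shown that $(V,\diamond,\ast,\phi)$ is a Hom-pre-$F$-manifold algebra, $(V,\cdot_V,[\cdot,\cdot]_V,\phi)$ is its sub-adjacent Hom-$F$-manifold algebra $V^c$ and $T\colon V^c\to(A,\cdot,[\cdot,\cdot],\alpha)$ is the claimed homomorphism.

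The heart of the argument is a trio of transport identities. Substituting $u\diamond v=\mu(T(u))v$ and $u\ast v=\rho(T(u))v$ into the defining formulas for $F_1,F_2$, pulling the twists out with $T\phi=\alpha T$, and collapsing the inner $T$ of a $\diamond$- or $\ast$-product to the corresponding $\cdot$- or $[\cdot,\cdot]$-product of $A$ via the $\huaO$-operator equations, I expect to obtain, for all $u,v,w\in V$,
\[
F_1(u,v,w)=\mathcal{L}_1(T(u),T(v),w),\qquad F_2(u,v,w)=\mathcal{L}_2(T(u),T(v),w),
\]
where $\mathcal{L}_1,\mathcal{L}_2$ are the operators attached to the representation $(V;\rho,\mu,\phi)$ of $A$ in Definition \ref{def-rep-manifo}; and, since $T$ intertwines the product and bracket on $V^c$ with those on $A$,
\[
T\big(\mathcal{L}_{V}(u,v,w)\big)=\mathcal{L}_{A}(T(u),T(v),T(w)),
\]
where $\mathcal{L}_{V}$ and $\mathcal{L}_{A}$ denote the Hom-Leibnizators of $V^c$ and of $A$. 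The verification of these three identities is the only real computation, and the delicate point is purely the bookkeeping of twist maps: one repeatedly uses $T\phi^{k}=\alpha^{k}T$ together with the multiplicativity conditions for $\rho$ and $\mu$ to move each $\alpha$ and $\phi$ into place. I expect this to be the main obstacle, although it is entirely mechanical.

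Finally, \eqref{eq:pre-HM 1} and \eqref{eq:pre-HM 2} drop out. For \eqref{eq:pre-HM 1} with arguments $a,b,c,d\in V$: by the transport identities its left-hand side is $\mathcal{L}_1(T(a)\cdot T(b),\alpha T(c),\phi(d))$, and applying \eqref{eq:rep 1} with $A$-arguments $T(a),T(b),T(c)$ and $V$-argument $d$ gives $\mu(\alpha^2T(a))\,\mathcal{L}_1(T(b),T(c),d)+\mu(\alpha^2T(b))\,\mathcal{L}_1(T(a),T(c),d)$; since $\mu(\alpha^2T(a))=\mu(T\phi^2(a))$ acts on $V$ as left $\diamond$-multiplication by $\phi^2(a)$ and $\mathcal{L}_1(T(b),T(c),d)=F_1(b,c,d)$, this equals $\phi^2(a)\diamond F_1(b,c,d)+\phi^2(b)\diamond F_1(a,c,d)$, the right-hand side. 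For \eqref{eq:pre-HM 2} with arguments $a,b,c,d$: its left-hand side is $\mathcal{L}_{V}(a,b,c)\diamond\phi^2(d)=\mu\big(\mathcal{L}_{A}(T(a),T(b),T(c))\big)\phi^2(d)$ by the third transport identity, and \eqref{eq:rep 2} rewrites this as $\mathcal{L}_2(\alpha T(b),\alpha T(c),\mu(T(a))d)-\mu(\alpha^2T(a))\,\mathcal{L}_2(T(b),T(c),d)$; using $\alpha T(b)=T\phi(b)$, $\alpha T(c)=T\phi(c)$, $\mu(T(a))d=a\diamond d$ and again that $\mu(\alpha^2T(a))$ is left $\diamond$-multiplication by $\phi^2(a)$, this becomes $F_2(\phi(b),\phi(c),a\diamond d)-\phi^2(a)\diamond F_2(b,c,d)$, the right-hand side. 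Hence all axioms of a Hom-pre-$F$-manifold algebra hold for $(V,\diamond,\ast,\phi)$, and the homomorphism claim of the previous paragraph follows.
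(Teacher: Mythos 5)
Your proposal is correct and follows essentially the same route as the paper: both reduce \eqref{eq:pre-HM 1} and \eqref{eq:pre-HM 2} on $V$ to the representation axioms \eqref{eq:rep 1} and \eqref{eq:rep 2} by substituting $u\diamond v=\mu(T(u))v$, $u\ast v=\rho(T(u))v$ and using $T\phi=\alpha T$ together with the $\huaO$-operator identities. The only difference is presentational: you isolate the transport identities $F_1(u,v,w)=\mathcal{L}_1(T(u),T(v),w)$, $F_2(u,v,w)=\mathcal{L}_2(T(u),T(v),w)$, $T(\mathcal{L}_V(u,v,w))=\mathcal{L}_A(T(u),T(v),T(w))$ as explicit lemmas, whereas the paper carries out the same substitution in one long in-line expansion.
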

\begin{proof}Since $T$ is an $\huaO$-operator on the commutative Hom-associative algebra $(A,\cdot,\alpha )$ as well as an $\huaO$-operator on the Hom-Lie algebra $(A,[\cdot,\cdot],\alpha )$ with respect to the representations $(V;\mu, \phi)$ and $(V;\rho,\phi)$ respectively. We deduce that $(V,\diamond,\phi)$ is a Hom-Zinbiel algebra and $(V,\ast,\phi)$ is a Hom-pre-Lie algebra.
Put
$$[u,v]_T:=u\ast v-v\ast u=\rho(T(u))v-\rho(T(v))u\ \ \text{and}\ \ \ u\cdot_T v:=u\diamond v+v\diamond u=\mu(T(u))v+\mu(T(v))u.$$

By these facts and Eq.\eqref{eq:rep 1}, for $v_1,v_2,v_3,v_4\in V$, one has
{\small\begin{eqnarray*}  
  &&F_1(v_1\cdot_T v_2,\phi(v_3),\phi(v_4))-\phi^2(v_1)\diamond F_1(v_2,v_3,v_4)-\phi^2(v_2)\diamond F_1(v_1,v_3,v_4)\\
  &=&\phi(v_1\cdot_T v_2)\ast(\phi(v_3)\diamond \phi(v_4))-\phi(\phi(v_3))\diamond((v_1\cdot_T v_2)\ast \phi(v_4))-[v_1\cdot_T v_2,\phi(v_3)]_T\diamond \phi(\phi(v_4))\\
  &&-\phi^2(v_1)\diamond( \phi(v_2)\ast(v_3\diamond v_4))+\phi^2(v_1)\diamond(\phi(v_3)\diamond(v_2\ast v_4))+\phi^2(v_1)\diamond  ([v_2,v_3]_T\diamond \phi(v_4))\\
  &&-\phi^2(v_2)\diamond( \phi(v_1)\ast(v_3\diamond v_4))+\phi^2(v_2)\diamond(\phi(v_3)\diamond(v_1\ast v_4))+\phi^2(v_2)\diamond ([v_1,v_3]_T\diamond \phi(v_4))\\
  &=&\rho(T(\phi(v_1\cdot_T v_2)) (\mu(T(\phi(v_3))) \phi(v_4))-\mu(T(\phi(\phi(v_3)))(\rho(T(v_1\cdot_T v_2)) \phi(v_4))\\
  &&-\mu(T[v_1\cdot_T v_2,\phi(v_3)]_T) \phi(\phi(v_4))
   -\mu(T(\phi^2(v_1))(\rho(T(\phi(v_2))\mu(T(v_3) v_4)\\
   &&+\mu(T(\phi^2(v_1))(\mu(T(\phi(v_3))(\rho(T(v_2)) v_4))+\mu(T(\phi^2(v_1))\mu(T[v_2,v_3]_T) \phi(v_4))\\
  &&-\mu(T(\phi^2(v_2))(\rho(T(\phi(v_1))(\mu(T(v_3) v_4))+\mu(T(\phi^2(v_2))(\mu(T(\phi(v_3))(\rho(T(v_1) v_4))\\
  &&+\mu(T(\phi^2(v_2)) (\mu(T[v_1,v_3]_T) \phi(v_4))\\
  &=&\mathcal{L}_1(T(v_1)\cdot  T(v_2),\a(T(v_3)),\phi(v_4))-\mu(\a^2(T(v_1))) \mathcal{L}_1(T(v_2),T(v_3),v_4)\\
  &&-\mu(\a^2(T(v_2))) \mathcal{L}_1(T(v_1),T(v_3),v_4)=0,
\end{eqnarray*}}
which implies that Eq.\eqref{eq:pre-HM 1} holds.

Similarly, according to Eq.\eqref{eq:rep 2}  we can check Eq.\eqref{eq:pre-HM 2}. Thus, $(V,\diamond,\ast,\phi)$ is a  Hom-pre-$F$-manifold algebra. Furthermore, for all $u,v\in V$ we have
$$
[T(u), T(v)] =T\Big(\rho(T(u))(v)-\rho(T(v))(u)\Big)=T(u\ast v-v\ast u)=T([u,v]_T)
$$
Similarly, it is obvious to check that $T(u)\cdot T(v)=T(u\cdot_T v)$.
Thus, $T$ is a homomorphism from $V^c$ to $(A,\cdot ,[\cdot,\cdot],\alpha)$.
\end{proof}

\begin{cor}
Let $(A,\cdot ,[\cdot,\cdot],\alpha)$ be a Hom-$F$-manifold algebra and $T:V\longrightarrow
A$ an $\huaO$-operator on  $A$ with respect to the representation $(V;\rho,\mu,\phi)$. Then $T(V)=\{T(v)\mid v\in V\}\subset A$ is a subalgebra of $A$ and there is an induced Hom-pre-$F$-manifold algebra structure on $T(V)$ given by
$$T(u)\diamond T(v)=T(u\diamond v),\quad T(u)\ast T(v)=T(u\ast v)$$
for all $u,v\in V$.
\end{cor}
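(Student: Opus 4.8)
The plan is to deduce the corollary directly from Theorem \ref{thm:pre-F-algebra and F-algebra} together with the fact that $T\colon V^c\to A$ is a homomorphism of Hom-$F$-manifold algebras, which is the last assertion of that theorem. First I would show that $T(V)$ is a Hom-$F$-manifold subalgebra of $A$: since $T\phi=\alpha T$, the subspace $T(V)$ is $\alpha$-invariant, and by the two identities $T(u)\cdot T(v)=T(u\cdot_T v)$ and $[T(u),T(v)]=T([u,v]_T)$ established in the proof of the theorem, $T(V)$ is closed under both $\cdot$ and $[\cdot,\cdot]$. Hence $(T(V),\cdot,[\cdot,\cdot],\alpha)$ inherits the Hom-$F$-manifold algebra structure from $A$.

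Next I would address the well-definedness of the proposed operations on $T(V)$. The formulas $T(u)\diamond T(v)=T(u\diamond v)$ and $T(u)\ast T(v)=T(u\ast v)$ are a priori only candidate definitions; to see they are consistent one must check that if $T(u)=T(u')$ and $T(v)=T(v')$ then $T(u\diamond v)=T(u'\diamond v')$ and similarly for $\ast$. The clean way to see this is to note that the proposed operations are not actually independent data: from $u\diamond v=\mu(T(u))v$ we get $T(u\diamond v)=T(\mu(T(u))v)$, and by the $\huaO$-operator identity for the commutative Hom-associative part this equals $T(u)\cdot T(v)-T(\mu(T(v))u)$; rearranging and using the symmetric statement, $T(u\diamond v)+T(v\diamond u)=T(u)\cdot T(v)$. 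More usefully, I would simply observe that Theorem \ref{thm:pre-F-algebra and F-algebra} gives $(V,\diamond,\ast,\phi)$ the structure of a Hom-pre-$F$-manifold algebra with sub-adjacent Hom-$F$-manifold algebra $V^c=(V,\cdot_T,[\cdot,\cdot]_T,\phi)$, and $T$ is a homomorphism $V^c\to A$. Then $T$ carries the Hom-pre-$F$-manifold structure of $V$ forward: define $a\diamond' b$ and $a\ast' b$ on $T(V)$ as the $T$-images; well-definedness of these follows once one knows $\Ker T$ is an ideal for $\diamond$ and $\ast$, or more simply by descending to the quotient $V/\Ker T\cong T(V)$, on which $\diamond$ and $\ast$ descend precisely because $\Ker T$ is a two-sided ideal in both structures (this is immediate from $u\diamond v=\mu(T(u))v$ and $u\ast v=\rho(T(u))v$: if $T(u)=0$ then $u\diamond v=\mu(0)v$... no — rather, if $T(v)=0$ then $u\diamond v=\mu(T(u))v$ need not vanish, so one argues instead that $u\diamond v$ modulo $\Ker T$ depends only on $u,v$ modulo $\Ker T$ using the representation axioms).

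The main obstacle, therefore, is the well-definedness check, i.e. verifying that $\Ker T$ behaves like an ideal so that the operations descend; this is where the representation properties \eqref{representation-ass1}--\eqref{representation-ass2} and \eqref{representation1}--\eqref{representation2} must be invoked, since closure of $\Ker T$ under $\diamond$ on the right side requires that $v\in\Ker T$ forces $\mu(T(u))v$ and $\rho(T(u))v$ to lie in $\Ker T$, which in turn follows from the $\huaO$-operator relations expressing $T(\mu(T(u))v)$ and $T(\rho(T(u))v)$ in terms of products and brackets of elements of $T(V)$. Once that is in place, the transported operations automatically satisfy the Hom-Zinbiel, Hom-pre-Lie and compatibility axioms \eqref{eq:pre-HM 1}--\eqref{eq:pre-HM 2} on $T(V)$ because $T$ (viewed as the quotient map followed by an isomorphism onto $T(V)$) is an algebra isomorphism for these structures, and the sub-adjacent Hom-$F$-manifold algebra of $(T(V),\diamond',\ast',\alpha)$ is exactly $(T(V),\cdot,[\cdot,\cdot],\alpha)$ by the last part of Theorem \ref{thm:pre-F-algebra and F-algebra} applied through $T$. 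I would close by remarking that no new computation is needed beyond the theorem: the corollary is the statement that the image of an $\huaO$-operator inherits, via $T$, the Hom-pre-$F$-manifold structure that $T$ produces on the source.
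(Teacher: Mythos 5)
Your proposal is correct and matches the paper's (implicit) argument: the corollary is stated there without proof as an immediate consequence of Theorem \ref{thm:pre-F-algebra and F-algebra}, using exactly the facts you cite, namely that $T\phi=\alpha T$ makes $T(V)$ an $\alpha$-invariant subspace closed under $\cdot$ and $[\cdot,\cdot]$, and that the Hom-pre-$F$-manifold structure on $V$ is pushed forward along $T$. Your additional well-definedness check (that $\Ker T$ is a two-sided ideal for $\diamond$ and $\ast$, via $T(\mu(T(u))w)=0$ and $T(\rho(T(u))w)=0$ for $w\in\Ker T$, which follow from the $\huaO$-operator identities) is a genuine detail the paper leaves implicit, and despite the false start in your middle paragraph you resolve it correctly.
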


\begin{cor}
 Let $(A,\cdot ,[\cdot,\cdot],\alpha )$ be a Hom-$F$-manifold algebra and $\mathcal R:A\longrightarrow
A$ a Rota-Baxter operator of weight $0$. Define new operations on $A$ by
$$x\diamond y=\mathcal R(x)\cdot  y,\quad x\ast y=[\mathcal R(x),y] .$$
Then $(A,\diamond,\ast,\alpha)$ is a Hom-pre-$F$-manifold algebra and $\mathcal R$ is a homomorphism from the sub-adjacent Hom-$F$-manifold algebras $(A,\cdot_\mathcal R,[\cdot,\cdot]_\mathcal R,\alpha)$ to $(A,\cdot ,[\cdot,\cdot],\alpha )$, where $x\cdot_\mathcal R y=x\diamond y+y\diamond x$ and $[x,y]_\mathcal R=x\ast y-y\ast x$.

\end{cor}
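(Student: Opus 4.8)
The plan is to obtain this statement as an immediate specialization of Theorem \ref{thm:pre-F-algebra and F-algebra} to the adjoint representation. First I would recall that, for any Hom-$F$-manifold algebra $(A,\cdot,[\cdot,\cdot],\alpha)$, the tuple $(A;\ad,L,\alpha)$, where $\ad(x)y=[x,y]$ and $L(x)y=x\cdot y$, is a representation of $A$ in the sense of Definition \ref{def-rep-manifo} — this is exactly the adjoint representation recorded in the example above. In particular identities \eqref{eq:rep 1} and \eqref{eq:rep 2} are available with $\rho=\ad$, $\mu=L$, $\phi=\alpha$.

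Next I would note that a Rota-Baxter operator $\mathcal R$ of weight $0$ on $A$ is, by the definition given just before Theorem \ref{thm:pre-F-algebra and F-algebra}, simultaneously a Rota-Baxter operator on the commutative Hom-associative algebra $(A,\cdot,\alpha)$ and on the Hom-Lie algebra $(A,[\cdot,\cdot],\alpha)$, i.e.\ an $\huaO$-operator on each with respect to the corresponding adjoint representations. Hence $\mathcal R:A\to A$ is precisely an $\huaO$-operator on the Hom-$F$-manifold algebra $A$ with respect to the representation $(A;\ad,L,\alpha)$, so the hypotheses of Theorem \ref{thm:pre-F-algebra and F-algebra} are met.

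Now I would apply Theorem \ref{thm:pre-F-algebra and F-algebra} verbatim with $V=A$, $T=\mathcal R$, $\rho=\ad$, $\mu=L$ and $\phi=\alpha$. The operations it produces are $x\diamond y=\mu(\mathcal R(x))y=L(\mathcal R(x))y=\mathcal R(x)\cdot y$ and $x\ast y=\rho(\mathcal R(x))y=\ad(\mathcal R(x))y=[\mathcal R(x),y]$, which are exactly the operations in the statement. The theorem then gives that $(A,\diamond,\ast,\alpha)$ is a Hom-pre-$F$-manifold algebra, and that $T=\mathcal R$ is a homomorphism from the sub-adjacent Hom-$F$-manifold algebra $A^c$, whose structure maps are $x\cdot_{\mathcal R}y=x\diamond y+y\diamond x$ and $[x,y]_{\mathcal R}=x\ast y-y\ast x$, to $(A,\cdot,[\cdot,\cdot],\alpha)$.

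Since the argument is a pure specialization, there is essentially no obstacle; the only points that deserve to be made explicit are the identification of ``Rota-Baxter operator of weight $0$ on the Hom-$F$-manifold algebra'' with ``$\huaO$-operator with respect to the adjoint representation $(A;\ad,L,\alpha)$'', and the fact that $(A;\ad,L,\alpha)$ is indeed a representation of the Hom-$F$-manifold algebra $A$. Both are already established in the excerpt, so no further computation is required.
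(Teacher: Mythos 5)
Your proposal is correct and matches the paper's intended argument: the corollary is stated as an immediate consequence of Theorem \ref{thm:pre-F-algebra and F-algebra}, obtained exactly as you do by taking $V=A$, $T=\mathcal R$, $\rho=\ad$, $\mu=L$, $\phi=\alpha$ and using that the adjoint representation $(A;\ad,L,\alpha)$ is a representation of the Hom-$F$-manifold algebra. Nothing further is needed.
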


\begin{cor}
  Let  $(A,\cdot,[\cdot,\cdot],\alpha)$ be the Hom-$F$-manifold algebra (defined in   Proposition \ref{ex:derivation-HMA}). If $\mathcal R:A\longrightarrow A$ is a Rota-Baxter operator with weight $0$ on the commutative Hom-associative algebra $(A,\cdot,\a)$  satisfying $\mathcal R  D =D  B$. Then $\mathcal R:A\longrightarrow A$ is a Rota-Baxter operator with weight $0$ on the Hom-Lie algebra $(A,[\cdot,\cdot],\a)$.
 Furthermore, $(A,\diamond_\mathcal R,\ast_\mathcal R,\a)$ is a Hom-pre-$F$-manifold algebra, where
 $$x\diamond_\mathcal R y=\mathcal R(x)\cdot y,\quad x\ast_\mathcal R y=\mathcal R(x)\cdot D(y)-y\cdot D(\mathcal R(x)).$$
 In addition,  the sub-adjacent Hom-$F$-manifold algebra $(A,\cdot_\mathcal R,[\cdot,\cdot]_\mathcal R)$  is given by
 $$x\cdot_\mathcal R y=x\diamond_\mathcal R y+y\diamond_\mathcal R x,\quad [x,y]_\mathcal R=x\ast_\mathcal R y-y\ast_\mathcal R x,~~~\forall ~~ x,y\in A.$$

\end{cor}

\begin{ex}{\rm
  We put $A=C^\infty([0,1])$. Then $(A,\cdot,[\cdot,\cdot],\alpha)$ is an Hom-$F$-manifold algebra where the  multiplication,  the  bracket and the twist map are defined by:
  \begin{eqnarray*}
  f\cdot g&=& \lambda fg,\\
{[f,g]}&=&\lambda(fg'-gf')\\
\alpha(f)&=&\lambda f,\quad\forall~f,g\in A, \ \lambda\in \mathbb R.
  \end{eqnarray*}
  It is well-known that the integral operator is a Rota-Baxter operator with weight $0$:
$$
   \mathcal R:A\rightarrow A,\quad \mathcal R(f)(x):=\int^x_0f(t)dt.
$$
It is easy to see that
$$\mathcal R  \partial_x=\partial_x  \mathcal R=\Id.$$
Thus $(A,\diamond_\mathcal R,\ast_\mathcal R, \alpha)$ is a Hom-pre-$F$-manifold algebra, where
 $$f\diamond_\mathcal R g=g\int^x_0f(t)dt,\quad f\ast_\mathcal R g=g'\int^x_0f(t)dt-f\cdot g$$
 and $(A,\cdot_\mathcal R,[\cdot,\cdot]_\mathcal R, \alpha)$ is the sub-adjacent Hom-$F$-manifold algebra with
 $$f\cdot_\mathcal R g=f\int^x_0g(t)dt+g\int^x_0f(t)dt,\quad [f,g]_\mathcal R=f'\int^x_0g(t)dt -g'\int^x_0f(t)dt.$$

}
\end{ex}

\begin{ex}{\rm
Consider the Hom-$F$-manifold algebra $(A,\cdot,[\cdot,\cdot])$ given by Example \ref{ex:3-dimensional F-algebra}. Define the map $ \mathcal R:A\to A$ by
\begin{eqnarray*}
  \mathcal R(e_1)=\frac{1}{3}e_1,\quad \mathcal R(e_2)=\frac{1}{2}e_2,\quad
  \mathcal R(e_3)=e_3.
\end{eqnarray*}
 It is obivous to check that $\mathcal R$  is a Rota-Baxter operator with weight $0$ on the Hom-$F$-manifold algebra $A$. Thus $(A,\diamond_\mathcal R,\ast_\mathcal R,\a)$ is a Hom-pre-$F$-manifold algebra, where
 \begin{eqnarray*}
 e_2\diamond_\mathcal R e_3&=&\frac{1}{2}b^3e_1,\quad e_3\diamond_\mathcal R e_2=b^3e_1,\quad e_3\diamond_\mathcal R e_3=b^2 e_1;\\
 e_2\ast_\mathcal R e_3&=&-\frac{1}{2}b^3e_1,\quad e_3\ast_\mathcal R e_2=b^3e_1,\quad \quad \quad \quad \quad
 \end{eqnarray*}
 and $(A,\cdot_\mathcal R,[\cdot,\cdot]_\mathcal R)$ is the sub-adjacent Hom-$F$-manifold algebra with
\begin{eqnarray*}
e_2\cdot_\mathcal R e_3&=&\frac{3}{2}b^3e_1,\quad e_3\cdot_\mathcal R e_3=2 b^2e_1;\\
{[e_2,e_3]_\mathcal R}&=&-\frac{3}{2}b^3e_1.
\end{eqnarray*}

}
\end{ex}

Now,  we give a necessary and sufficient condition on an Hom-$F$-manifold algebra admitting a
Hom-pre-$F$-manifold algebra structure.

\begin{pro}\label{pro:nsc}
 Let $(A,\cdot ,[\cdot,\cdot] ,\alpha)$ be an Hom-$F$-manifold algebra. There is a Hom-pre-$F$-manifold algebra structure on $A$
 such that its sub-adjacent Hom-$F$-manifold algebra is exactly $(A,\cdot ,[\cdot,\cdot],\alpha)$ if and only if there exists an invertible $\huaO$-operator on $(A,\cdot ,[\cdot,\cdot],\alpha )$.
\end{pro}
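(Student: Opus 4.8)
The strategy is the standard "operator $\Leftrightarrow$ pre-structure" equivalence, now carried out for Hom-$F$-manifold algebras. First I would establish the easy direction. Suppose $(A,\diamond,\ast,\alpha)$ is a Hom-pre-$F$-manifold algebra whose sub-adjacent Hom-$F$-manifold algebra is $(A,\cdot,[\cdot,\cdot],\alpha)$; that is, $x\cdot y = x\diamond y + y\diamond x$ and $[x,y]=x\ast y - y\ast x$. I claim the identity map $\Id:A\to A$ is an invertible $\huaO$-operator on $(A,\cdot,[\cdot,\cdot],\alpha)$ with respect to the adjoint representation $(A;\ad,L,\alpha)$ — equivalently, $\Id$ is simultaneously a Rota-Baxter operator of weight zero on the commutative Hom-associative algebra $(A,\cdot,\alpha)$ and on the Hom-Lie algebra $(A,[\cdot,\cdot],\alpha)$. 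For the commutative Hom-associative part, the Rota-Baxter identity with $\mathcal R=\Id$ reads $x\cdot y = x\diamond y + y\diamond x$ (using $L(x)y = x\diamond y$), which is exactly the definition of $\cdot$; for the Hom-Lie part it reads $[x,y] = x\ast y - y\ast x$, again the definition of $[\cdot,\cdot]$. Since $\Id$ trivially commutes with $\alpha$ and is invertible, this direction is immediate. (Strictly, to invoke Theorem \ref{thm:pre-F-algebra and F-algebra} one wants $\diamond$ and $\ast$ recovered as $\mu(T(u))v$ and $\rho(T(u))v$ with $T=\Id$, $\mu=L_\diamond$, $\rho=L_\ast$, which holds by definition of $L_\diamond,L_\ast$.)

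For the converse, suppose $T:A\to A$ is an invertible $\huaO$-operator on $(A,\cdot,[\cdot,\cdot],\alpha)$ with respect to a representation $(V;\rho,\mu,\phi)$ with $V=A$ and $\phi$ related to $\alpha$ via $T\phi = \alpha T$. By Theorem \ref{thm:pre-F-algebra and F-algebra}, the operations $u\diamond v = \mu(T(u))v$ and $u\ast v = \rho(T(u))v$ give a Hom-pre-$F$-manifold algebra structure on $V$ whose sub-adjacent Hom-$F$-manifold algebra is some $(V,\cdot_T,[\cdot,\cdot]_T,\phi)$, and $T$ is a homomorphism from $V^c$ to $(A,\cdot,[\cdot,\cdot],\alpha)$. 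Since $T$ is invertible, it is an isomorphism of Hom-$F$-manifold algebras, so I can transport the pre-structure along $T$: define new operations on $A$ by
\begin{equation*}
x\diamond' y := T\big(T^{-1}(x)\diamond T^{-1}(y)\big),\qquad x\ast' y := T\big(T^{-1}(x)\ast T^{-1}(y)\big).
\end{equation*}
Because $T$ intertwines $\phi$ and $\alpha$, and because the Hom-Zinbiel, Hom-pre-Lie, and compatibility axioms \eqref{eq:pre-HM 1}--\eqref{eq:pre-HM 2} are preserved under this conjugation (they are multilinear identities built from $\diamond,\ast,\phi$, and $T$ is an isomorphism carrying $\phi$ to $\alpha$), the tuple $(A,\diamond',\ast',\alpha)$ is a Hom-pre-$F$-manifold algebra. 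Its sub-adjacent operations are $x\diamond' y + y\diamond' x = T(T^{-1}(x)\cdot_T T^{-1}(y)) = T(T^{-1}(x))\cdot T(T^{-1}(y)) = x\cdot y$ by the homomorphism property, and similarly the bracket recovers $[\cdot,\cdot]$; hence the sub-adjacent Hom-$F$-manifold algebra of $(A,\diamond',\ast',\alpha)$ is exactly $(A,\cdot,[\cdot,\cdot],\alpha)$.

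The main obstacle is the verification that conjugation by the isomorphism $T$ sends a Hom-pre-$F$-manifold algebra to a Hom-pre-$F$-manifold algebra — in particular that the twisted maps $\diamond',\ast'$ still have $\alpha$ (not some conjugate of $\phi$) as their structure map, and that the mixed relations $F_1, F_2$ transform correctly. This is really a "transport of structure along an isomorphism" lemma; it is routine but one must be careful with the placement of $\alpha$ versus $\phi$ in each term, using $T\phi=\alpha T$ repeatedly to move $T$ past the twist maps. Once that bookkeeping is done, both implications close, and invertibility of $T$ is precisely what is needed to run the transport in the converse direction and to produce $\Id$ as the operator in the forward direction.
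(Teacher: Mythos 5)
Your proof is correct and follows essentially the same route as the paper: the forward direction produces the identity map as an invertible $\huaO$-operator with respect to the representation $(A;L_\ast,L_\diamond,\alpha)$ of the sub-adjacent algebra, and your transported operations in the converse unwind to exactly the paper's formulas $x\diamond y=T(\mu(x)(T^{-1}(y)))$ and $x\ast y=T(\rho(x)(T^{-1}(y)))$. If anything you are more explicit than the paper about why the conjugated structure still satisfies the Hom-pre-$F$-manifold axioms with twist map $\alpha=T\phi T^{-1}$ (the paper only verifies that the sub-adjacent operations recover $\cdot$ and $[\cdot,\cdot]$), and your restriction to $V=A$ in the converse is unnecessary but harmless since the same transport works for any invertible $T\colon V\to A$.
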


\begin{proof} If $T:V\longrightarrow
A$ is an invertible $\huaO$-operator on  $A$ with respect to the representation $(V;\rho,\mu,\phi)$, then the compatible Hom-pre-$F$-manifold algebra structure on $A$ is given by
 $$x\diamond y=T(\mu(x)(T^{-1}(y))),\quad x\ast y=T(\rho(x)(T^{-1}(y)))$$
for all $x,y\in A$, since
\begin{align*} x\diamond y+y\diamond x&=T(\mu(x)(T^{-1}(y)))+T(\mu(y)(T^{-1}(x)))\\
&=T(\mu(TT^{-1}(x))(T^{-1}(y))+\mu(TT^{-1}(y))(T^{-1}(x)))\\
&=TT^{-1}(x)\cdot TT^{-1}(y)=x\cdot y
\end{align*}
and
\begin{align*} x\ast y-y\ast x&=T(\rho(x)(T^{-1}(y)))-T(\rho(y)(T^{-1}(x)))\\
&=T(\rho(TT^{-1}(x))(T^{-1}(y))-\rho(TT^{-1}(y))(T^{-1}(x)))\\
&=[TT^{-1}(x), TT^{-1}(y)]=[x, y].
\end{align*}
Conversely, let $(A,\diamond,\ast,\alpha)$ be a Hom-pre-$F$-manifold algebra and $(A,\cdot ,[\cdot,\cdot],\alpha)$ the sub-adjacent Hom-$F$-manifold algebra. Then the identity map $\Id$ is an $\huaO$-operator on $A$ with respect to the representation $(A;\ad,  L,\alpha)$.
\end{proof}
Let $(A,\mu,\alpha)$ be a Hom-algebra and   $\omega\in\wedge^2A^*$. Recall that $\omega$ is a symplectic structure on $A$ if it satisfies
\begin{align*}
    &\omega(\alpha(x),\alpha(y)) =\omega(x,y),\quad \quad  \displaystyle\circlearrowleft_{x,y,z}\omega(\mu(x,  y),\alpha(z)) =0.
\end{align*}

\begin{cor}
Let $(A,\cdot ,[\cdot,\cdot],\alpha )$ be a  coherence Hom-$F$-manifold algebra and  $\omega$ be a symplectic structure on $(A,\cdot,\a)$ and $(A,[\cdot,\cdot],\a)$.
Then there is a compatible Hom-pre-$F$-manifold algebra structure on $A$ given by
$$
\omega(x\diamond y,\alpha(z))=\omega(\alpha(y),x\cdot  z),\quad \omega(x\ast y,\alpha(z))=\omega(\alpha(y),[z,x] ).
$$
\end{cor}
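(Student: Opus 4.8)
The plan is to realise the symplectic form as an invertible $\huaO$-operator relative to the coadjoint representation, and then to apply Proposition~\ref{pro:nsc}. Since $\omega$ is nondegenerate it induces a linear isomorphism $\omega^\flat\colon A\to A^*$, $\langle\omega^\flat(x),y\rangle=\omega(x,y)$; I set $T:=(\omega^\flat)^{-1}\colon A^*\to A$, so that $T^{-1}=\omega^\flat$. Because $A$ is assumed to be a coherence Hom-$F$-manifold algebra, Corollary~\ref{ex:dual representation} guarantees that the coadjoint representation $(A^*;\ad^\star,-L^\star,(\alpha^{-1})^*)$ is a genuine representation of $A$ — this is precisely where the coherence hypothesis enters. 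The core of the argument is then the claim that $T$ is an invertible $\huaO$-operator on $(A,\cdot,[\cdot,\cdot],\alpha)$ with respect to this representation, i.e. that $T$ is simultaneously an $\huaO$-operator on the commutative Hom-associative algebra $(A,\cdot,\alpha)$ with respect to $(A^*;-L^\star,(\alpha^{-1})^*)$ and an $\huaO$-operator on the Hom-Lie algebra $(A,[\cdot,\cdot],\alpha)$ with respect to $(A^*;\ad^\star,(\alpha^{-1})^*)$.

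The twist compatibility $T\circ(\alpha^{-1})^*=\alpha\circ T$ is immediate, being equivalent, after a change of variable, to the invariance $\omega(\alpha(x),\alpha(y))=\omega(x,y)$. For the quadratic identity on the associative side I would apply $\omega^\flat$ to both sides of $T(u)\cdot T(v)=T\big(-L^\star(T(u))(v)-L^\star(T(v))(u)\big)$, put $u=\omega^\flat(x)$, $v=\omega^\flat(y)$, and pair with an arbitrary $z\in A$; unwinding $L^\star$ via \eqref{eq:new1gen} and using that $\cdot$ is commutative and $\alpha$ invertible, the identity to prove becomes
\[
\omega(x\cdot y,z)=\omega\big(x,\alpha^{-1}(y)\cdot\alpha^{-2}(z)\big)+\omega\big(y,\alpha^{-1}(x)\cdot\alpha^{-2}(z)\big).
\]
This follows from the cyclic symplectic relation $\circlearrowleft_{x,y,z}\omega(x\cdot y,\alpha(z))=0$ by substituting $z\mapsto\alpha^{-1}(z)$ and applying the invariance $\omega(\alpha(\cdot),\alpha(\cdot))=\omega(\cdot,\cdot)$ to the last two cyclic terms. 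The Lie side is entirely parallel: the same manipulation, now with $\ad^\star$ in place of $L^\star$, the relation $\circlearrowleft_{x,y,z}\omega([x,y],\alpha(z))=0$ in place of the associative one, and the skew-symmetry of the bracket, gives $[T(u),T(v)]=T\big(\ad^\star(T(u))(v)-\ad^\star(T(v))(u)\big)$. This pair of computations — the Hom-analogue of the classical fact that a symplectic form is an invertible $\huaO$-operator for the coadjoint representation — is where the real work lies; the only genuine nuisance is bookkeeping the various $\alpha$-twists and the difference between the naive $\ast$-dual and the representation-preserving $\star$-dual.

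With $T$ established as an invertible $\huaO$-operator, Proposition~\ref{pro:nsc} (through the construction of Theorem~\ref{thm:pre-F-algebra and F-algebra}) produces a Hom-pre-$F$-manifold algebra structure on $A$ whose sub-adjacent Hom-$F$-manifold algebra is $(A,\cdot,[\cdot,\cdot],\alpha)$, given explicitly by $x\diamond y=-T\big(L^\star(x)(T^{-1}(y))\big)$ and $x\ast y=T\big(\ad^\star(x)(T^{-1}(y))\big)$. It remains to rewrite these two formulas in terms of $\omega$: applying $\omega^\flat=T^{-1}$ and pairing with $\alpha(z)$, the expression for $L^\star$ from \eqref{eq:new1gen} gives $\omega(x\diamond y,\alpha(z))=\omega\big(y,\alpha^{-1}(x)\cdot\alpha^{-1}(z)\big)=\omega(\alpha(y),x\cdot z)$ (the last step using the invariance and the multiplicativity of $\alpha$), while the corresponding expression for $\ad^\star$, together with skew-symmetry of the bracket, gives $\omega(x\ast y,\alpha(z))=-\omega(\alpha(y),[x,z])=\omega(\alpha(y),[z,x])$. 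These are exactly the asserted formulas, which completes the proof.
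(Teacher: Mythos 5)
Your proposal is correct and follows essentially the same route as the paper: invoke the coherence hypothesis to get the coadjoint representation $(A^*;\ad^\star,-L^\star,(\alpha^{-1})^*)$, show that $(\omega^\sharp)^{-1}$ is an invertible $\huaO$-operator on both the commutative Hom-associative and Hom-Lie parts, and then apply Proposition \ref{pro:nsc}. The only difference is that you carry out the verification that the symplectic identities imply the $\huaO$-operator identities (and the unwinding of the $\star$-duals into the stated $\omega$-formulas) explicitly, whereas the paper asserts these steps without computation.
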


 \begin{proof}
   Since $(A,\cdot ,[\cdot,\cdot],\alpha )$ is a  coherence Hom-$F$-manifold algebra, $(A^*;\ad^\star,-\huaL^\star,(\alpha^{-1})^*)$ is a representation of $A$. By the fact that $\omega$ is a symplectic structure, $(\omega^\sharp)^{-1}$ is an $\huaO$-operator on the commutative Hom-associative algebra $(A,\cdot,\alpha )$ with respect to the representation $(A^*;-\huaL^\star,(\alpha^{-1})^*)$, where $\omega^\sharp:A\longrightarrow A^*$ is defined by $\langle\omega^\sharp(x),y\rangle=\omega(x,y)$. Furthermore,  $(\omega^\sharp)^{-1}$ is an $\huaO$-operator on the Hom-Lie algebra $(A,[\cdot,\cdot],\alpha )$ with respect to the representation $(A^*;\ad^\star,(\alpha^{-1})^*)$. Thus, $(\omega^\sharp)^{-1}$ is an $\huaO$-operator on the  coherence Hom-$F$-manifold algebra $(A,\cdot ,[\cdot,\cdot] ,\alpha)$ with respect to the representation $(A^*;\ad^\star,-\huaL^\star,(\alpha^{-1})^*)$. By Proposition \ref{pro:nsc}, there is a compatible Hom-pre-$F$-manifold algebra structure on $A$ given as above.
 \end{proof}

\section{Hom-pre-Lie  formal deformation  of commutative Hom-pre-Lie algebras}\label{Hom-Pre-Lie  formal deformation  of commutative Hom-pre-Lie algebras}
In this section, we introduce the notion of Hom-pre-Lie formal deformations
of commutative Hom-associative algebras   and show that Hom-$F$-manifold algebras
are the corresponding semi-classical limits. Furthermore, we show that Hom-pre-Lie infinitesimal deformations and extensions of
 Hom-pre-Lie $n$-deformations to Hom-pre-Lie $(n+1)$-deformations of a commutative Hom-associative algebra $A$ are classified by the second and the  third cohomology groups of the Hom-pre-Lie algebra $A$.

Let $(A,\cdot,\a )$ be a commutative Hom-associative algebra. Recall that an { associative formal deformation}  of $A$
is a sequence of bilinear maps $\mu_n:A\times A\rightarrow A$ for $n\geqslant 0$ with $\mu_0$ being the commutative associative
algebra product $\cdot $ on $A$, such that the $\K[[t]]$-bilinear product $\cdot_t$ on $A[[t]]$ determined by
$$x\cdot_t y=\sum_{n=0}^\infty t^n\mu_n(x,y),\quad\forall~x,y\in A$$
is associative, where $A[[t]]$ is the set of formal power series of $t$ with coefficients in $A$. Given by
$$\{x,y\}=\mu_1(x,y)-\mu_1(y,x),\quad\forall~x,y\in A.$$
It is well known \cite{Makhlouf&Silvestrov} that $(A,\cdot ,\{\cdot,\cdot\},\a)$ is a Hom-Poisson algebra, called the { semi-classical limit} of $(A[[t]],\cdot_t,\a)$.

Since a Hom-associative algebra can be regarded as a Hom-pre-Lie algebra,
one may look for formal deformations of a commutative Hom-associative
algebra into Hom-pre-Lie algebras, that is, in the Hom-associative formal deformation, we replace the Hom-associative product
$\cdot_{t}$ by the Hom-pre-Lie product, and wonder what
additional structure will appear on $A$.   Surprisingly, we obtain
 the structure of the Hom-$F$-manifold algebra. Now we
give the definition of a Hom-pre-Lie formal deformation of a
commutative Hom-associative algebra.
\begin{defi}
  Let $(A,\cdot,\a)$ be a commutative Hom-associative algebra. A { Hom-pre-Lie formal deformation} of $A$
  is a sequence of bilinear maps $\mu_n:A\times A\rightarrow A$ for $n\geqslant 0$ with $\mu_0$ being the commutative Hom-associative algebra product
   $\cdot $ on $A$, such that the $\K[[t]]$-bilinear product $\cdot_t$ on $A[[t]]$  given by
$$x\cdot_t y=\sum_{n=0}^\infty t^n\mu_n(x,y),\quad\forall~x,y\in A$$
is a Hom-pre-Lie algebra structure.
\end{defi}
Note that the rule of Hom-pre-Lie algebra product $\cdot_t$ on $A[[t]]$ is equivalent to
  {\small\begin{equation}\label{eq:pre-Lie rule}
  \sum_{i+j=k}\big(\mu_i(\mu_j(x,y),\a(z))-\mu_i(\a(x),\mu_j(y,z))\big)=\sum_{i+j=k}\big(\mu_i(\mu_j(y,x),\a(z))-\mu_i(\a(y),\mu_j(x,z))\big),\quad\forall~k\geqslant 0.
  \end{equation}}

\begin{thm}\label{thm:deformation quantization}
Let $(A,\cdot,\a)$ be a commutative Hom-associative algebra and $(A[[t]],\cdot_t,\a)$ a Hom-pre-Lie formal deformation of $A$. Define
$$[x,y] =\mu_1(x,y)-\mu_1(y,x),\quad\forall~x,y\in A.$$
Then $(A,\cdot ,[\cdot,\cdot],\a )$ is a Hom-$F$-manifold algebra, called the { semi-classical limit}
of $(A[[t]],\cdot_t,\a)$. The Hom-pre-Lie algebra $(A[[t]],\cdot_t,\a)$ is called a { Hom-pre-Lie deformation quantization} of $(A,\cdot,\a )$.
\end{thm}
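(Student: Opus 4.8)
The plan is to write the defining relation \eqref{eq:pre-Lie rule} of the Hom-pre-Lie deformation $\cdot_t=\sum_{n\geqslant 0}t^n\mu_n$ as an identity of power series in $t$ and to read off, from its low-order coefficients, the two facts still missing: that $[\cdot,\cdot]$ is a Hom-Lie bracket and that the Hom-Hertling-Manin identity \eqref{eq:HM relation} holds (the commutative Hom-associative structure being $\mu_0=\cdot$, which is given).

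For the Hom-Lie part, the commutator $[x,y]_t:=x\cdot_t y-y\cdot_t x$ makes $(A[[t]],[\cdot,\cdot]_t,\a)$ a Hom-Lie algebra by Lemma \ref{lem:pre-Lie-Lie} applied to the Hom-pre-Lie algebra $(A[[t]],\cdot_t,\a)$. Since $\mu_0$ is commutative, $[x,y]_t=\sum_{n\geqslant 1}t^n\big(\mu_n(x,y)-\mu_n(y,x)\big)$ has no $t^0$-term, so $[x,y]_t=t[x,y]+O(t^2)$. Feeding this into the Hom-Jacobi identity $\circlearrowleft_{x,y,z}[\a(x),[y,z]_t]_t=0$ and comparing the coefficients of $t^2$ gives exactly $\circlearrowleft_{x,y,z}[\a(x),[y,z]]=0$; with the evident skew-symmetry this shows that $(A,[\cdot,\cdot],\a)$ is a Hom-Lie algebra.

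Next I would isolate the key first-order relation. The coefficient of $t^0$ in \eqref{eq:pre-Lie rule} is $0=0$ by Hom-associativity; comparing the coefficients of $t^1$, after the terms $\mu_1(x\cdot y,\a(z))$ cancel by commutativity of $\mu_0$, yields
\begin{equation*}
[x,y]\cdot\a(z)=\mu_1(\a(x),y\cdot z)-\mu_1(\a(y),x\cdot z)+\a(x)\cdot\mu_1(y,z)-\a(y)\cdot\mu_1(x,z)
\end{equation*}
for all $x,y,z\in A$; call this $(\star)$. Writing $E(p,q,r):=\mu_1(\a(p),q\cdot r)+\a(p)\cdot\mu_1(q,r)$ and $\{p,q\}:=\mu_1(p,q)+\mu_1(q,p)$, the relation $(\star)$ becomes $[x,y]\cdot\a(z)=E(x,y,z)-E(y,x,z)$, and substituting $(\star)$ into the Hom-Leibnizator and using the commutativity and Hom-associativity of $\cdot$ rewrites it as
\begin{equation*}
\mathcal{L}(x,y,z)=-\{\a(x),y\cdot z\}-\a(x)\cdot\{y,z\}+E(y,x,z)+E(z,x,y).
\end{equation*}

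The last step, the Hom-Hertling-Manin identity \eqref{eq:HM relation}, is where the real work lies. It is linear in the bracket, hence in $\mu_1$; and among the coefficients of \eqref{eq:pre-Lie rule} the only one that is $\mu_1$-linear without also involving $\mu_2,\mu_3,\dots$ is $(\star)$, so \eqref{eq:HM relation} should follow from $(\star)$ together with the commutative Hom-associative axioms alone. Concretely, one substitutes the products $x\cdot y,\ x\cdot z,\ y\cdot w,\dots$ into $(\star)$ and into the formula for $\mathcal{L}$ above and checks that the two sides of \eqref{eq:HM relation} agree term by term. The main obstacle is exactly this bookkeeping: the two sides of \eqref{eq:HM relation} expand into on the order of twenty $\mu_1$-monomials, and one must choose the right specializations of $(\star)$ to exhibit the cancellation; extra care is needed because $\a$ is not assumed multiplicative, so a term such as $\a(x\cdot y)$ cannot be broken up and must be carried along. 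Once \eqref{eq:HM relation} is verified, $(A,\cdot,[\cdot,\cdot],\a)$ is a Hom-$F$-manifold algebra, the semi-classical limit of $(A[[t]],\cdot_t,\a)$.
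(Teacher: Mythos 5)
Your reduction is sound as far as it goes: obtaining the Hom-Lie structure from the $t^2$-coefficient of the Hom-Jacobi identity for $[\cdot,\cdot]_t$ is exactly the paper's argument, and your relation $(\star)$ is the correct $t^1$-coefficient of \eqref{eq:pre-Lie rule}. But the proof stops precisely where the content of the theorem lies. You assert that the Hom-Hertling-Manin identity \eqref{eq:HM relation} ``should follow'' from $(\star)$ together with the commutative Hom-associative axioms, estimate the expansion at some twenty monomials, and leave the cancellation --- including the choice of ``the right specializations of $(\star)$'' that you yourself flag as the main obstacle --- unexhibited. That verification is the heart of the theorem, not routine bookkeeping; as written, the claim that the two sides of \eqref{eq:HM relation} agree is unproved.

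The gap is closable, and the paper closes it without any new computation: your $(\star)$ is a rearrangement of Eq.~\eqref{eq:pseudo-pre-HM1} with $x\ast y=\mu_1(x,y)$ and $x\cdot y=\mu_0(x,y)$ (move every term to one side and the two identities differ by an overall sign). Together with the already-established fact that the commutator of $\mu_1$ satisfies the Hom-Jacobi identity --- which is exactly Hom-Lie admissibility of $(A,\mu_1,\a)$ --- this says that $(A,\cdot,\mu_1,\a)$ is a Hom-$F$-manifold admissible algebra in the sense of Definition~\ref{defi:pseudo-pre-HMA}, and Proposition~\ref{pro:pseudo-pre-HMA-HMA} then delivers \eqref{eq:HM relation} for $[x,y]=\mu_1(x,y)-\mu_1(y,x)$ with no further work. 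If you insist on a self-contained argument, you must actually reproduce the computation carried out in the proof of that proposition; your intermediate rewriting of $\mathcal L(x,y,z)$ in terms of $E$ and $\{\cdot,\cdot\}$ is correct (it uses $(\star)$ twice) but is only the first step of that calculation, not a substitute for it.
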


\begin{proof}
  Define the bracket $[\cdot,\cdot]_t$ on $A[[t]]$ by
$$[x,y]_t=x\cdot_t y-y\cdot_t x=t[x,y] +t^2(\mu_2(x,y)-\mu_2(y,x))+\cdots, \quad \forall~x,y\in A.$$
By the fact that $(A[[t]],\cdot_t,\a)$ is a Hom-pre-Lie algebra, $(A[[t]],[\cdot,\cdot]_t,\a)$ is a Hom-Lie algebra.

The $t^2$-terms of the Hom-Jacobi identity for $[\cdot,\cdot]_t$ gives the Hom-Jacobi identity for $[\cdot,\cdot]$.
Indeed,
\begin{align*}
\displaystyle\circlearrowleft_{x,y,z}[\a(x),[y,z]_t]_t&=\displaystyle\circlearrowleft_{x,y,z}[\a(x),t[y,z] +t^2(\mu_2(y,z)-\mu_2(z,y))+\cdots]_t,~~~~\forall t\\
&=t^2\displaystyle\circlearrowleft_{x,y,z} [\a(x),[y,z]]+\cdots,~~~~~~~~\forall t\\
&=0.
\end{align*}
 Thus $(A,[\cdot,\cdot],\a)$ is a Hom-Lie algebra.

For $k=1$ in \eqref{eq:pre-Lie rule}, by the commutativity of $\mu_0$, we have
\begin{align*}
 & \mu_0(\mu_1(x,y),\a(z))-\mu_0(\a(x),\mu_1(y,z))-\mu_1(\a(x),\mu_0(y,z))\\
  &=\mu_0(\mu_1(y,x),\a(z))-\mu_0(\a(y),\mu_1(x,z))-\mu_1(\a(y),\mu_0(x,z)).
\end{align*}
This is just the equality \eqref{eq:pseudo-pre-HM1} with $x\cdot  y=\mu_0(x,y)$ and $x\ast  y=\mu_1(x,y)$ for $x,y\in A$.
Then $(A,\cdot ,\ast,\a )$ is a Hom-$F$-manifold-admissible algebra. Thanks to  Theorem \ref{pro:pseudo-pre-HMA-HMA}, $(A,\cdot ,[\cdot,\cdot],\a)$ is a Hom-$F$-manifold algebra.
\end{proof}

In the following, we study Hom-pre-Lie $n$-deformations and pre-Lie infinitesimal deformations of   commutative Hom-associative algebras.
\begin{defi}
  Let $(A,\cdot,\a )$ be a commutative Hom-associative algebra. A { Hom-pre-Lie  $n$-deformation} of $A$ is a sequence of bilinear maps
   $\mu_k:A\times A\rightarrow A$ for $0\leq k\leq n$ with $\mu_0$ being the commutative Hom-associative algebra product $\cdot $ on $A$,
   such that the $\K[[t]]/(t^{n+1})$-bilinear product $\cdot_t$ on $A[[t]]/(t^{n+1})$ determined by
$$x\cdot_t y=\sum_{k=0}^nt^k\mu_k(x,y),\quad\forall~x,y\in A$$
is a Hom-pre-Lie algebra structure.
\end{defi}

We call a Hom-pre-Lie $1$-deformation of a commutative Hom-associative algebra $(A,\cdot,\a )$ a { Hom-pre-Lie infinitesimal deformation}
and denote it by $(A,\mu_1,\a)$.

By direct calculations, $(A,\mu_1,\a)$ is a Hom-pre-Lie infinitesimal deformation of a commutative Hom-associative algebra $(A,\cdot,\a )$
  if and only if for all $x,y,z\in A$
 \begin{eqnarray}
 \label{2-closed} &&\mu_1(x,y)\cdot  \a(z)-\a(x)\cdot  \mu_1(y,z)-\mu_1(x,y\cdot  \a(z))\nonumber\\
 &=&\mu_1(y,x)\cdot  \a(z)-\a(y)\cdot  \mu_1(x,z)-\mu_1(\a(y),x\cdot  z).
\end{eqnarray}
Equation $(\ref{2-closed})$ means that $\mu_1$ is a $2$-cocycle for the Hom-pre-Lie algebra $(A,\cdot )$, i.e. $\partial\mu_1=0$.

Two Hom-pre-Lie infinitesimal deformations  $A_t=(A,\mu_1,\a)$ and $A'_t=(A,\mu'_1,\a')$  of a
commutative Hom-associative algebra $(A,\cdot,\a)$ are said to be {
equivalent} if there exists a family of Hom-pre-Lie algebra
homomorphisms ${\Id}+t\varphi:A_t\longrightarrow A'_t$ modulo $t^2$. A Hom-pre-Lie infinitesimal deformation
is said to be { trivial} if there exists a family of Hom-pre-Lie
algebra homomorphisms ${\Id}+t\varphi:A_t\longrightarrow (A,\cdot,\a )$ modulo $t^2$.

By direct calculations, $A_t$ and $A'_t$  are
equivalent Hom-pre-Lie infinitesimal deformations if and only if
\begin{eqnarray}
\mu_1(x,y)-\mu_1'(x,y)&=&x\cdot  \varphi(y)+\varphi(x)\cdot  y-\varphi(x\cdot  y).\label{2-exact}
\end{eqnarray}
Equation $(\ref{2-exact})$ means that $\mu_1-\mu_1'=\partial \varphi$. Thus we have

\begin{thm}
 There is a one-to-one correspondence between the space of equivalence classes of Hom-pre-Lie infinitesimal deformations of $A$ and
  the second cohomology group $H^2(A,A)$.
\end{thm}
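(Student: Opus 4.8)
The plan is to establish the bijection by constructing maps in both directions between equivalence classes of Hom-pre-Lie infinitesimal deformations of $(A,\cdot,\a)$ and $H^2(A,A)$, and checking they are mutually inverse. Recall that for the Hom-pre-Lie algebra $(A,\cdot,\a)$ with its adjoint representation $(A;L_\cdot,R_\cdot,\a)$, the space of $2$-cochains is $C^2(A;A)=\Hom(A\otimes A,A)$, and the coboundary $\partial:C^1(A;A)\to C^2(A;A)$ is the one given by \eqref{eq:12.1}. First I would spell out, using \eqref{eq:12.1} with $n=2$, that for $\mu_1\in C^2(A;A)$ the condition $\partial\mu_1=0$ is exactly \eqref{2-closed}; this is the computation already signalled in the text just before the theorem, so it amounts to matching terms. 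Thus every Hom-pre-Lie infinitesimal deformation $(A,\mu_1,\a)$ yields a class $[\mu_1]\in Z^2(A;A)$, and I would send it to its cohomology class in $H^2(A;A)=Z^2(A;A)/B^2(A;A)$.

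Next I would verify this assignment is well-defined on equivalence classes: if $A_t=(A,\mu_1,\a)$ and $A_t'=(A,\mu_1',\a)$ are equivalent via $\Id+t\varphi$, then by \eqref{2-exact} we have $\mu_1-\mu_1'=\partial\varphi\in B^2(A;A)$, so $[\mu_1]=[\mu_1']$ in $H^2(A;A)$; here I would also note that \eqref{2-exact} is precisely the statement that $\partial\varphi(x,y)=x\cdot\varphi(y)+\varphi(x)\cdot y-\varphi(x\cdot y)$ computed from \eqref{eq:12.1} with $n=1$, for which one uses that $\mu_0=\cdot$ is commutative so that $\rho=L_\cdot$ and $\mu=R_\cdot$ coincide on the symmetric product. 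For injectivity of the induced map on equivalence classes, I would run the converse: if $[\mu_1]=[\mu_1']$ then $\mu_1-\mu_1'=\partial\varphi$ for some $\varphi\in C^1(A;A)=\gl(A)$ commuting with $\a$, and then \eqref{2-exact} shows $\Id+t\varphi$ is the required equivalence, so $A_t\sim A_t'$. For surjectivity, given any $2$-cocycle $\mu_1$, the pair $(\mu_0,\mu_1)=(\cdot,\mu_1)$ satisfies \eqref{eq:pre-Lie rule} in degrees $k=0$ (automatic, since $\mu_0$ is Hom-associative) and $k=1$ (this is exactly $\partial\mu_1=0$ rewritten as \eqref{2-closed}), hence defines a Hom-pre-Lie $1$-deformation whose class maps to $[\mu_1]$.

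I would then assemble these observations: the map $[(A,\mu_1,\a)]\mapsto[\mu_1]$ is a well-defined bijection from equivalence classes of Hom-pre-Lie infinitesimal deformations onto $H^2(A,A)$, which is the claim. I do not expect a serious obstacle here; the only point requiring care is the bookkeeping in matching the abstract coboundary formula \eqref{eq:12.1} against the concrete Equations \eqref{2-closed} and \eqref{2-exact}, in particular tracking the $\a^{-1}$'s and making sure that with $\rho=L_\cdot$, $\mu=R_\cdot$, $\phi=\a$, and the commutativity $x\cdot y=y\cdot x$ the two-term (for $n=1$) and four-term (for $n=2$) expressions collapse to the displayed formulas. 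Since the paper has already recorded both \eqref{2-closed} and \eqref{2-exact} as the outcome of ``direct calculations,'' the theorem follows formally, and the proof can be kept to a few lines invoking these two equations together with the definitions of $Z^2$, $B^2$, and equivalence of deformations.
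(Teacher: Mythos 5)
Your proposal is correct and follows essentially the same route as the paper, which simply records that Equation \eqref{2-closed} says $\partial\mu_1=0$ and Equation \eqref{2-exact} says $\mu_1-\mu_1'=\partial\varphi$, and then states the theorem as an immediate consequence. Your version merely makes explicit the well-definedness, injectivity, and surjectivity checks that the paper leaves implicit, so there is no substantive difference in approach.
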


It is routine to check that

\begin{pro}
  Let $(A,\cdot,\a)$ be a commutative Hom-associative algebra such that $H^2(A,A)=0$. Then all Hom-pre-Lie infinitesimal deformations of $A$ are trivial.
\end{pro}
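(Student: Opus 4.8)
The plan is to deduce this directly from the cohomological reformulations obtained just above, so essentially no new computation is needed. First I would unwind the hypothesis: a Hom-pre-Lie infinitesimal deformation of $(A,\cdot,\a)$ is by definition a bilinear map $\mu_1$ making $x\cdot_t y=x\cdot y+t\mu_1(x,y)$ a Hom-pre-Lie product modulo $t^2$, and by the discussion around Equation \eqref{2-closed} this is equivalent to $\partial\mu_1=0$, i.e.\ $\mu_1\in Z^2(A;A)$, where $\partial$ is the coboundary operator of the Hom-pre-Lie algebra $(A,\cdot,\a)$ (the commutative Hom-associative algebra viewed as a Hom-pre-Lie algebra) with coefficients in its adjoint representation $(A;L_\cdot,R_\cdot,\a)$.

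Next I would invoke the hypothesis $H^2(A;A)=0$. Since $H^2(A;A)=Z^2(A;A)/B^2(A;A)$, vanishing of this quotient means $Z^2(A;A)=B^2(A;A)$, so the cocycle $\mu_1$ is a coboundary: there exists $\varphi\in C^1(A;A)=\Hom(A,A)$ with $\mu_1=\partial\varphi$, that is $\mu_1(x,y)=x\cdot\varphi(y)+\varphi(x)\cdot y-\varphi(x\cdot y)$ for all $x,y\in A$. Comparing this with Equation \eqref{2-exact}, where one takes the second deformation to be the trivial one $\mu_1'=0$, this says precisely that $A_t=(A,\mu_1,\a)$ and the undeformed algebra $(A,\cdot,\a)$ are equivalent via the family ${\Id}+t\varphi$ of Hom-pre-Lie algebra homomorphisms modulo $t^2$. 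Hence $(A,\mu_1,\a)$ is trivial, which is the assertion.

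I do not expect a genuine obstacle here: the statement is a formal corollary of the identifications \eqref{2-closed} and \eqref{2-exact} together with the definition $H^2=Z^2/B^2$, and it is the natural companion of the preceding theorem classifying equivalence classes of infinitesimal deformations by $H^2(A;A)$. The only points deserving a line of care are that the primitive $\varphi$ produced as a $1$-cochain is admissible as an infinitesimal equivalence — in particular that it is $\a$-compatible, which is built into the $1$-cochain space of this complex — and that the sign conventions in $\mu_1=\partial\varphi$ and in \eqref{2-exact} match; once $\mu_1'$ is set to $0$ they coincide verbatim, so the same $\varphi$ that trivialises the cocycle also trivialises the deformation.
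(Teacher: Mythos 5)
Your argument is correct and is exactly the routine deduction the paper has in mind (the paper itself offers no written proof, declaring the result routine after establishing \eqref{2-closed} and \eqref{2-exact}): a cocycle $\mu_1$ becomes a coboundary $\partial\varphi$ under $H^2(A,A)=0$, and setting $\mu_1'=0$ in \eqref{2-exact} identifies $\Id+t\varphi$ as the equivalence with the undeformed algebra. No gap; this matches the intended proof.
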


\begin{defi}
Let $\{\mu_1, \cdots,\mu_{n}\}$ be a Hom-pre-Lie $n$-deformation of a commutative Hom-associative algebra $(A,\cdot,\a )$. A Hom-pre-Lie $(n+1)$-deformation
of a commutative Hom-associative algebra $(A,\cdot,\a)$ given by $\{\mu_1,\cdots,\mu_n,\mu_{n+1}\}$ is called an { extension} of the Hom-pre-Lie $n$-deformation
 given by $\{\mu_1, \cdots,\mu_{n}\}$.
\end{defi}

\begin{thm}
  For any Hom-pre-Lie $n$-deformation of a commutative Hom-associative algebra $(A,\cdot,\a)$, the $\Theta_n\in\Hom(\otimes^3\g,  A)$ defined by
{\small\begin{equation}\label{eq:3-cocycle}
\Theta_n(x,y,z)=\sum_{i+j=n+1,i,j\geq 1}\big(\mu_i(\mu_j(x,y),\a(z))-\mu_i(\a(x),\mu_j(y,z))-\mu_i(\mu_j(y,x),\a(z))+\mu_i(\a(y),\mu_j(x,z))\big)
\end{equation}}
is a cocycle, i.e. $\partial \Theta_n=0$.

 Moreover, the Hom-pre-Lie $n$-deformation $\{\mu_1, \cdots,\mu_{n}\}$ extends into some Hom-pre-Lie $(n + 1)$-deformation
 if and only if $[\Theta_n]=0$ in $H_{\rm  }^3(A,A)$.
\end{thm}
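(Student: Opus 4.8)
The plan is to reduce the statement to two facts: (a) the level-$(n+1)$ instance of the Hom-pre-Lie deformation equation \eqref{eq:pre-Lie rule}, once the terms containing $\mu_{n+1}$ are isolated from those built only out of $\mu_1,\dots,\mu_n$, is exactly the cohomological equation $\partial\mu_{n+1}=\Theta_n$ for the Hom-pre-Lie algebra $(A,\cdot,\a)$ with coefficients in its adjoint representation $(A;L_{\cdot},R_{\cdot},\a)$ (up to the same $\alpha$-normalization of arguments by which the $k=1$ case of \eqref{eq:pre-Lie rule} reduces to the $2$-cocycle equation \eqref{2-closed}; note $L_\cdot=R_\cdot$ and the sub-adjacent Hom-Lie bracket vanishes, both by commutativity); and (b) $\partial\Theta_n=0$. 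Granting these, a family $\{\mu_1,\dots,\mu_n,\mu_{n+1}\}$ is a Hom-pre-Lie $(n+1)$-deformation extending $\{\mu_1,\dots,\mu_n\}$ precisely when $\Theta_n$ lies in the image of $\partial$, i.e. when $[\Theta_n]=0$ in $H^3(A,A)$, and (b) is what makes this class well defined.

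First I would check that $\Theta_n$ is skew-symmetric in its first two slots: interchanging $x$ and $y$ in \eqref{eq:3-cocycle} permutes the four summands and reverses the overall sign, so $\Theta_n\in\Hom(\wedge^2A\otimes A,A)=C^3(A;A)$ and asking whether $\Theta_n$ is a cocycle makes sense. For (a), I would write \eqref{eq:pre-Lie rule} with $k=n+1$ and split the range $i+j=n+1$ into the three cases $(i,j)=(n+1,0)$, $(i,j)=(0,n+1)$ and $i,j\geq1$. The last case contributes exactly $\Theta_n$; the two extreme cases involve only $\mu_{n+1}$ and $\mu_0=\cdot$, and using the commutativity and Hom-associativity of $\mu_0$ together with the multiplicativity of $\a$ one matches their total contribution with $\pm(\partial\mu_{n+1})$ as read off from \eqref{eq:12.1}. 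This is a short computation entirely parallel to the paper's own passage from the $k=1$ case to \eqref{2-closed}.

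The technical core is (b), $\partial\Theta_n=0$. The conceptual reason is that $\Theta_n$ is, up to sign and a factor $\tfrac12$, the degree-$(n+1)$ part of the self-bracket $\sum_{i+j=n+1,\,i,j\geq1}[\mu_i,\mu_j]$ in the graded Lie algebra underlying the cochain complex \eqref{eq:12.1} that governs Hom-pre-Lie structures, whose differential is $\partial=[\mu_0,-]$; then, applying the graded Jacobi identity and the lower-order deformation equations $\partial\mu_i=-\tfrac12\sum_{a+b=i,\,a,b\geq1}[\mu_a,\mu_b]$ (valid for all $i\leq n$ because $\{\mu_1,\dots,\mu_n\}$ is a Hom-pre-Lie $n$-deformation), one rewrites $\partial\Theta_n$ as a sum of triple brackets $[[\mu_a,\mu_b],\mu_c]$ with $a,b,c\geq1$, $a+b+c=n+1$, which cancel in pairs by antisymmetry and Jacobi. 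Since this graded Lie structure is not set up in the paper, I would instead prove $\partial\Theta_n=0$ by the corresponding explicit computation: expand $(\partial\Theta_n)(x,y,z,w)$ from \eqref{eq:12.1} (all bracket-terms dropping out by commutativity), substitute the definition \eqref{eq:3-cocycle} of $\Theta_n$, and regroup the resulting terms so that each block is visibly an instance of the Hom-pre-Lie rule \eqref{eq:pre-Lie rule} at some order $k\leq n$, together with the commutativity and Hom-associativity of $\mu_0$; substituting the vanishing of all these blocks collapses the expression to $0$.

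I expect the only genuine obstacle to lie in (b): the expansion of $\partial\Theta_n$ produces many terms, and the real work is in sorting them by which pair $(\mu_i,\mu_j)$ and which ``outer'' variable they come from so that the lower-order relations \eqref{eq:pre-Lie rule} can be applied with the correct signs. Everything else --- the skew-symmetry of $\Theta_n$, step (a), and the final equivalence ``an extension exists $\iff[\Theta_n]=0$'' --- is then immediate. If a fully self-contained account is desired, the cleanest route is to first record, as a preliminary lemma, the graded Lie bracket on $\bigoplus_k C^k(A;A)$ underlying \eqref{eq:12.1} and \eqref{eq:pre-Lie rule}, after which both (a) and (b) become one-line consequences of the Maurer--Cartan formalism.
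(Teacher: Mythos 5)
Your proposal follows essentially the same route as the paper: check that $\Theta_n$ is skew-symmetric in its first two arguments so that $\Theta_n\in C^3(A,A)$, identify the $k=n+1$ instance of \eqref{eq:pre-Lie rule} --- after separating the $(i,j)=(0,n+1)$ and $(n+1,0)$ terms, with the $(n+1,0)$ pair cancelling by commutativity of $\mu_0$ --- with the equation $\partial\mu_{n+1}=\Theta_n$, and conclude that an extension exists if and only if $[\Theta_n]=0$ in $H^3(A,A)$. The only divergence is that the paper dismisses the closedness of $\Theta_n$ with ``it is straightforward to check,'' whereas you actually outline how to establish it (either via the graded Lie bracket controlling the deformation or by direct expansion using the lower-order relations); both of your routes are standard and sound, so if anything your write-up is more complete than the paper's.
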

\begin{proof}
It is obvious that
\begin{equation*}
\Theta_n(x,y,z)=-\Theta_n(y,x,z),\quad \forall~x,y,z\in A.
\end{equation*}
Thus $\Theta_n$ is an element of $C^3(A,A)$. It is straightforward to check that the cochain $\Theta_n\in C^3(A,A)$ is closed.

Assume that the Hom-pre-Lie $(n+1)$-deformation of a commutative Hom-associative algebra $(A,\cdot )$
 given by $\{\mu_1,\cdots,\mu_n,\mu_{n+1}\}$ is an extension of the Hom-pre-Lie $n$-deformation   given by $\{\mu_1,\cdots,\mu_n\}$, then we have
\begin{align*}
\begin{split}
  \label{eq:n+1 term}&\a(x)\cdot  \mu_{n+1}(y,z)-\a(y)\cdot  \mu_{n+1}(x,z)+ \mu_{n+1}(y,x)\cdot  \a(z)-\mu_{n+1}(x,y)\cdot  \a(z)+\mu_{n+1}(y,x)\cdot \a(z)  \\
   \nonumber&\quad-\mu_{n+1}(x,y)\cdot \a(z)=\sum_{i+j=n+1,i,j\geq 1}\big(\mu_i(\mu_j(x,y),\a(z))-\mu_i(\a(x),\mu_j(y,z))-\mu_i(\mu_j(y,x),\a(z)) \\
  \nonumber &+\mu_i(\a(y),\mu_j(x,z))\big).
  \end{split}
\end{align*}
It is obvious that the right-hand side of the above equality is just $\Theta_n(x,y,z)$. We can rewrite the above equality as
$$\partial \mu_{n+1}(x,y,z)=\Theta_n(x,y,z).$$
We conclude that, if a Hom-pre-Lie $n$-deformation of a commutative Hom-associative algebra $(A,\cdot,\a)$ extends to a Hom-pre-Lie $(n + 1)$-deformation,
then $\Theta_n$ is coboundary.

Conversely, if $\Theta_n$ is coboundary, then there exists an element $\psi\in C^2(A,A)$ such that
$$\partial \psi(x,y,z)=\Theta_n(x,y,z).$$
It is not hard to check that $\{\mu_1,\cdots,\mu_n,\mu_{n+1}\}$ with $\mu_{n+1}=\psi$ generates a
 Hom-pre-Lie $(n+1)$-deformation of $(A,\cdot,\a )$ and thus this Hom-pre-Lie $(n+1)$-deformation is an extension of the Hom-pre-Lie $n$-deformation
 given by $\{\mu_1, \cdots,\mu_{n}\}$.
\end{proof}

\end{document}